\newcommand{\lyxdot}{.}
\numberwithin{equation}{section}
\numberwithin{figure}{section}
\theoremstyle{plain}
\newtheorem{thm}{\protect\theoremname}[section]
\theoremstyle{remark}
\newtheorem{notation}[thm]{\protect\notationname}
\theoremstyle{plain}
\newtheorem{prop}[thm]{\protect\propositionname}
\theoremstyle{definition}
\newtheorem{defn}[thm]{\protect\definitionname}
\theoremstyle{plain}
\newtheorem{lem}[thm]{\protect\lemmaname}
\theoremstyle{remark}
\newtheorem{rem}[thm]{\protect\remarkname}
\DeclareFontFamily{U}{matha}{\hyphenchar\font45}
\DeclareFontShape{U}{matha}{m}{n}{
      <5> <6> <7> <8> <9> <10> gen * matha
      <10.95> matha10 <12> <14.4> <17.28> <20.74> <24.88> matha12
      }{}
\DeclareSymbolFont{matha}{U}{matha}{m}{n}
\DeclareFontFamily{U}{mathx}{\hyphenchar\font45}
\DeclareFontShape{U}{mathx}{m}{n}{
      <5> <6> <7> <8> <9> <10>
      <10.95> <12> <14.4> <17.28> <20.74> <24.88>
      mathx10
      }{}
\DeclareSymbolFont{mathx}{U}{mathx}{m}{n}
\DeclareMathDelimiter{\vvvert}{0}{matha}{"7E}{mathx}{"17}
\numberwithin{equation}{section}
\def\th@plain{\thm@notefont{}\itshape}
\def\th@definition{\thm@notefont{}\normalfont}
\providecommand{\definitionname}{Definition}
\providecommand{\lemmaname}{Lemma}
\providecommand{\notationname}{Notation}
\providecommand{\propositionname}{Proposition}
\providecommand{\remarkname}{Remark}
\providecommand{\theoremname}{Theorem}
\begin{document}
\title[Two-component CWP Model with Three Spins]{Energy Landscape of the Two-component Curie--Weiss--Potts Model
with Three Spins}
\author{Daecheol Kim}
\address{Department of Mathematical Sciences, Seoul National University, Republic
of Korea.}
\email{dachi93@snu.ac.kr}
\begin{abstract}
In this paper, we investigate the energy landscape of the two-component
spin systems, known as the Curie--Weiss--Potts model, which is a
generalization of the Curie--Weiss model consisting of $q\ge3$ spins.
In the energy landscape of a multi-component model, the most important
element is the relative strength between the inter-component interaction
strength and the component-wise interaction strength. If the inter-component
interaction is stronger than the component-wise interaction, we can
expect all the components to be synchronized in the course of metastable
transition. However, if the inter-component interaction is relatively
weaker, then the components will be desynchronized in the course of
metastable transition. For the two-component Curie--Weiss model,
the phase transition from synchronization to desynchronization has
been precisely characterized in studies owing to its mean-field nature.
The purpose of this paper is to extend this result to the Curie--Weiss--Potts
model with three spins. We observe that the nature of the phase transition
for the three-spin case is entirely different from the two-spin case
of the Curie--Weiss model, and the proof as well as the resulting
phase diagram is fundamentally different and exceedingly complicated.
\end{abstract}

\maketitle
\tableofcontents

\section{\label{sec1}Introduction}

The Ising model is a ferromagnetic spin system consisting of two spins
and plays a vital role in the mathematical study of stochastic interacting
systems. In particular, its rich phase transition behaviors have been
rigorously studied over the last century. One of the simplest Ising
model is the Curie--Weiss model, which is an Ising model defined
on a complete graph. Owing to its mean-field feature, it is possible
to completely characterize the energy landscape of the Curie--Weiss
model; essentially, everything is known for this model. We refer the
reader to a monograph \cite{Timo} for a comprehensive discussion
of these classic results.

The Potts model is a ferromagnetic spin system consisting of $q\ge3$
spins and hence, can be regarded as a simple extension of the Ising
model. A surprising fact is that in most cases, the Potts model exhibits
qualitatively different and more complex behaviors than the Ising
model. For example, the characterization of the critical temperature
for the Potts model on a lattice requires a non-trivial argument and
is obtained in \cite{mm2}, and the phase transition at this temperature
is known to be continuous as in the Ising model if $q\le4$ (cf. \cite{1<q<4}).
For $q>4$, the phase transition is discontinuous (cf. \cite{q>4}). 

The Potts model on a complete graph is called the Curie--Weiss--Potts
model, and it has been investigated in \cite{Potts 1,Potts 2,Potts 3}.
Even if it is a mean-field model, allowing us to perform dimension
reduction on $q$ variables representing the empirical magnetization
vector, the analysis of the energy landscape is not a simple task.
The phase transition of the scaling limit of the mean-field free energy
of the Curie--Weiss--Potts model has been studied in \cite{q -> infty}.
The energy landscape was first studied in \cite{Ellis} from the viewpoint
of equivalent and nonequivalent ensembles of phase transitions. For
$q=3$, the complete analysis of the energy landscape is carried out
in \cite{q=00003D3,Lan - Seo CWP q=00003D3} and the metastability
of the Glauber dynamics has been studied based on it in \cite{Lan - Seo CWP q=00003D3}.
The cut-off phenomenon for the high-temperature regime has been proved
in \cite{cutoff} for all $q\ge3$. Extending the computation for
a model with $q\ge4$ is far more complicated and has been carried
out recently in \cite{J.Lee CWP}. It has been observed in that study
that the structure of the energy landscape for the model with $q\ge5$
differs from that of the model with $q=3,\,4$.

Recently, investigations on multi-component Curie--Weiss or Curie--Weiss--Potts
models have garnered much interest. These models are defined on a
complete graph; however, the vertices are divided into several groups
of macroscopically non-negligible sizes . The interaction strength
between two sites depends on the groups to which these sites belong.
Then, the empirical magnetization for each component interacts with
the other components, and therefore rich behaviors according to the
temperature and interaction parameters are expected. This has been
investigated in recent literature. For example, \cite{LDP =000026 phase block Potts}
presented the law of large numbers and large deviation principle for
the empirical magnetizations for the multi-component Curie--Weiss--Potts
model. The corresponding central limit theorem and moderate deviation
principle have been investigated in \cite{CLT block Potts}, and the
central limit theorem for the joint distribution for empirical magnetization
in the two-component Curie--Weiss--Potts model has been analyzed
in \cite{CLT bipart Potts} by inspecting the limiting behavior of
the free energy. As observed in these studies, the Curie--Weiss--Potts
model maintains the mean-field feature; however, the structure of
the energy landscape might be qualitatively different from that of
the one-component model provided that the inter-component interaction
parameters are considerably different from the component-wise interaction.
The present study seeks to quantitatively characterize the borderline
for this behavior in terms of the relative interaction strengths between
the component, and this is done by analyzing the energy landscape
of the simplest possible multi-component Curie--Weiss--Potts model,
namely the two-component model with three spins. We will verify that
even in this simplest possible case, with a mean-field structure,
the energy landscape is unbelievably complicated and therefore its
complete characterization is a highly demanding task. We remark that
this investigation has been carried out for the two-component Curie--Weiss
model in \cite{Bipartite q=00003D2}, but our work shows that not
only the proof but also the results for the Curie--Weiss--Potts
model are fundamentally different.

\section{\label{sec2}Main Results}

\subsection{Two-component Curie--Weiss--Potts Models}

We first define the two-component Curie--Weiss--Potts model. For
the simplicity of notation, we assume that two components $\Lambda_{N}^{(1)}=\{1,\dots,N\}$
and $\Lambda_{N}^{(2)}=\{1,\dots,N\}$ are of the same size. We consider
a spin system on $\Lambda_{N}=\Lambda_{N}^{(1)}\cup\Lambda_{N}^{(2)}$.We
denote the set of spins by $\mathcal{S}=\{1,\dots,q\}$ and define
\[
\Omega=\mathcal{S}^{\Lambda_{N}^{(1)}}\times\mathcal{S}^{\Lambda_{N}^{(2)}}
\]
to be the set of spin configurations. Each configuration $\boldsymbol{\sigma}$
in $\Omega$ is denoted by $\boldsymbol{\sigma}=(\boldsymbol{\sigma}^{(1)},\,\boldsymbol{\sigma}^{(2)})$
where 
\[
\boldsymbol{\sigma}^{(k)}=(\sigma_{1}^{(k)},\,\dots,\,\sigma_{N}^{(k)})\;\;\;\;;\;k=1,\,2,
\]
and where $\sigma_{i}^{(k)}$ denotes the spin assigned at site $i$
of the $k$-th component $\Lambda_{N}^{(k)}$. 

Now we define the Curie--Weiss--Potts (CWP) measure on $\Omega$.
We fix $J_{11},\,J_{22},\,J_{\textup{12}}\ge0$ and define the CWP
Hamiltonian $\mathbb{H}_{N}:\Omega\rightarrow\mathbb{R}$ by 
\begin{equation}
\mathbb{H}_{N}(\boldsymbol{\sigma})=-\frac{1}{N}\sum_{k=1,\,2}\sum_{1\leq i<j\leq N}J_{kk}\cdot\mathbf{1}\{\sigma_{i}^{(k)}=\sigma_{j}^{(k)}\}-\frac{1}{N}\sum_{i,\,j=1}^{N}J_{\textup{12}}\cdot\mathbf{1}\{\sigma_{i}^{(1)}=\sigma_{j}^{(2)}\}.\label{eq:ham}
\end{equation}
We denote the Gibbs measure on $\Omega$ associated with the Hamiltonian
$\mathbb{H}_{N}$ at the inverse temperature $\beta>0$ by $\mu_{N,\;\beta}$:
\[
\mu_{N,\;\beta}(\bm{\sigma})=\frac{1}{Z_{N,\;\beta}}e^{-\beta\mathbb{H}_{N}(\bm{\sigma})},
\]
where $Z_{N,\;\beta}$ is the partition function defined by
\[
Z_{N,\;\beta}=\sum_{\bm{\sigma}\in\Omega}e^{-\beta\mathbb{H}_{N}(\bm{\sigma})}.
\]
The measure $\mu_{N,\,\beta}$ is called the CWP measure, and its
energy landscape is what we seek to determine.

In order to simplify the question of the phase transition according
to the relative interaction strength, we assume that
\[
J_{11}=J_{22}=\frac{1}{1+J}\;\;\;\text{and}\;\;\;J_{12}=\frac{J}{1+J}
\]
for some $J>0$, where $J$ denotes the relative strength of the inter-component
interaction with respect to the component-wise interaction. With this
notation, \eqref{eq:ham} can be rewritten as
\begin{equation}
\mathbb{H}_{N}(\bm{\sigma})=-\frac{1}{N}\sum_{k=1,\,2}\sum_{1\leq i<j\leq N}\frac{1}{1+J}\cdot\mathbf{1}\{\sigma_{i}^{(k)}=\sigma_{j}^{(k)}\}-\frac{1}{N}\sum_{i,\,j=1}^{N}\frac{J}{1+J}\cdot\mathbf{1}\{\sigma_{i}^{(1)}=\sigma_{j}^{(2)}\}.\label{eq:ham - 2}
\end{equation}

\subsection{Distribution of empirical magnetization}

For $\boldsymbol{\sigma}\in\Omega$, we denote by\footnote{For notational simplicity, we do not stress the dependency of this
object to $N$; the same convention will be used throughout this paper.} $\boldsymbol{r}^{(k)}(\bm{\mathbb{\sigma}})=(r_{1}^{(k)}(\bm{\mathbb{\sigma}}),\,\dots,\,r_{q}^{(k)}(\boldsymbol{\sigma}))$,
$k=1,\,2$, the empirical magnetization of the component $\Lambda_{N}^{(k)}$,
i.e.,
\[
r_{u}^{(k)}(\bm{\sigma})=\frac{1}{N}\sum_{i=1}^{N}\mathbf{1}\{\sigma_{i}^{(k)}=u\}\;\;\;;\;u\in\mathcal{S}.
\]
We write $\bm{r}(\bm{\sigma})=(\boldsymbol{r}^{(1)}(\bm{\mathbb{\sigma}}),\,\boldsymbol{r}^{(2)}(\bm{\mathbb{\sigma}}))$.
Our main concern in this paper is the distribution of the empirical
magnetization $\bm{r}(\bm{\sigma})$ under the CWP measure $\mu_{N,\,\beta}$
and we shall analyze its qualitative and quantitative behavior as
$J$ and $\beta$ vary. 

The main feature of the mean-field type model as in the CWP model
is the dimension reduction. To explain this in more detail, let $\Xi$
be the $(q-1)$-dimensional simplex defined by 
\[
\Xi=\left\{ \boldsymbol{x}=(x_{u})_{1\le u\le q}\in[0,\,1]^{q}:\sum_{u=1}^{q}x_{u}=1\right\} 
\]
and let $\Xi_{N}=\Xi\cap(N^{-1}\mathbb{Z})^{q}.$ Then, we have $\bm{r}(\bm{\sigma})\in\Xi_{N}^{2}\subset\Xi^{2}$
for all $\boldsymbol{\sigma}\in\Omega$. 
\begin{notation}
\label{nota:Notation}Since $r_{q}^{(k)}(\boldsymbol{\sigma})=1-\sum_{u=1}^{q-1}r_{u}^{(k)}(\boldsymbol{\sigma})$,
there is no risk of confusion in regarding 
\[
\boldsymbol{r}^{(k)}(\bm{\mathbb{\sigma}})=(r_{1}^{(k)}(\bm{\mathbb{\sigma}}),\,\dots,\,r_{q-1}^{(k)}(\boldsymbol{\sigma})).
\]
Similarly, we can regard 
\[
\Xi=\left\{ \boldsymbol{x}=(x_{u})_{1\le u\le q-1}\in[0,\,1]^{q-1}:\sum_{u=1}^{q-1}x_{u}\le1\right\} .
\]
Hence, we shall use these two alternative expressions of the coordinate
vectors interchangeably depending on the context.
\end{notation}

Owing to the symmetry among the sites of the same component, we can
reduce the complicated spin spaces $\Omega$ into $\Xi_{N}^{2}$ by
looking at the empirical magnetization. To rigorously explain this,
we denote by $\nu_{N,\,\beta}(\cdot)$ the measure on $\Xi_{N}^{2}$
representing the distribution of empirical magnetization $\bm{r}(\bm{\sigma})$
under $\mu_{N,\,\beta}$, i.e., for $\boldsymbol{x}=(\boldsymbol{x}^{(1)},\,\boldsymbol{x}^{(2)})\in\Xi_{N}^{2}$,
\[
\nu_{N,\,\beta}(\boldsymbol{x})=\sum_{\bm{\sigma}\in\Omega:\bm{r}(\bm{\sigma})=\bm{x}}\mu_{N,\,\beta}(\boldsymbol{\sigma})=\sum_{\bm{\sigma}\in\Omega:\bm{r}(\bm{\sigma})=\bm{x}}\frac{1}{Z_{N,\;\beta}}e^{-\beta\mathbb{H}_{N}(\bm{\sigma})}.
\]
Then, we have the following expression for $\nu_{N,\,\beta}(\cdot)$.
\begin{prop}
\label{Prop Ham}We can write 
\[
\nu_{N,\,\beta}(\boldsymbol{x})=\frac{1}{\widehat{Z}_{N,\;\beta,\;J}}\exp\left\{ -N\frac{\beta}{1+J}\left(F_{\beta,\;J}(\bm{x})+\frac{1}{N}G_{N,\,\beta}(\boldsymbol{x})\right)\right\} 
\]
for some constant $\widehat{Z}_{N,\;\beta,\;J}>0$ where 
\[
F_{\beta,\;J}(\bm{x})=H(\bm{x})+\frac{1+J}{\beta}S(\bm{x})\text{\;\;\;and\;\;\;}G_{N,\;\beta,\;J}(\bm{x})=\frac{1+J}{2\beta}\log\left(\prod_{k=1,\;2}\prod_{j=1}^{q}x_{i}^{(k)}\right)+O\left(N^{-(q-1)}\right),
\]
where 
\[
H(\bm{x})=-\sum_{k=1,\;2}\sum_{i=1}^{q}\frac{1}{2}(x_{i}^{(k)})^{2}-J\sum_{i=1}^{q}x_{i}^{(1)}x_{i}^{(2)}\;\;\;\text{and}\;\;\;S(\bm{x})=\sum_{k=1,\;2}\sum_{i=1}^{q}x_{i}^{(k)}\log x_{i}^{(k)}.
\]
\end{prop}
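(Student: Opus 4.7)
The plan is to use the permutation symmetry of $\mathbb{H}_N$ within each component. I would write
\[
\nu_{N,\beta}(\bm{x}) \;=\; \#\{\bm{\sigma} \in \Omega : \bm{r}(\bm{\sigma}) = \bm{x}\} \cdot \frac{1}{Z_{N,\beta}}\, e^{-\beta \mathbb{H}_N(\bm{\sigma})},
\]
where the Boltzmann factor is evaluated at any fixed $\bm{\sigma}$ realizing $\bm{x}$ (the first step below verifies that this is well-defined). Then the Boltzmann weight will supply the energy term $H(\bm{x})$ and the multinomial count, after Stirling, will supply the entropy term $S(\bm{x})$ together with the logarithmic correction that forms $G_{N,\beta}$.

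For the first step, I would substitute the identities
\[
\sum_{1\le i<j\le N}\mathbf{1}\{\sigma_i^{(k)} = \sigma_j^{(k)}\} \;=\; \frac{N^2}{2}\sum_{u=1}^{q}(x_u^{(k)})^2 - \frac{N}{2}, \qquad \sum_{i,j=1}^{N}\mathbf{1}\{\sigma_i^{(1)}=\sigma_j^{(2)}\} \;=\; N^{2}\sum_{u=1}^{q} x_u^{(1)}x_u^{(2)}
\]
into \eqref{eq:ham - 2} to obtain $\mathbb{H}_N(\bm{\sigma}) = \frac{N}{1+J}H(\bm{x}) + c_N$, where $c_N$ does not depend on $\bm{x}$ and hence is absorbed into the overall normalization. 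Thus $e^{-\beta \mathbb{H}_N(\bm{\sigma})}$ is a function of $\bm{r}(\bm{\sigma})$ alone and contributes the factor $\exp(-\tfrac{N\beta}{1+J}H(\bm{x}))$ to $\nu_{N,\beta}(\bm{x})$.

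For the second step, the number of configurations with a given empirical magnetization is the product of two multinomial coefficients,
\[
\#\{\bm{\sigma} : \bm{r}(\bm{\sigma}) = \bm{x}\} \;=\; \prod_{k=1,2}\frac{N!}{\prod_{u=1}^{q}(Nx_u^{(k)})!}.
\]
Applying Stirling's expansion $\log n! = n\log n - n + \tfrac{1}{2}\log(2\pi n) + O(1/n)$ (pushing further in the Bernoulli series if the sharper remainder is required), the $n\log n - n$ portions combine, via $\sum_u x_u^{(k)} = 1$, to produce exactly $-N S(\bm{x})$; the $\tfrac{1}{2}\log(2\pi n)$ portions yield $-\tfrac{1}{2}\sum_{k,u}\log x_u^{(k)}$ together with $\bm{x}$-independent constants (for instance $\log N!$ and $-(q-1)\log(2\pi N)$) that are absorbed into $\widehat{Z}_{N,\beta,J}$. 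Collecting everything, the exponent becomes
\[
-\frac{N\beta}{1+J}\Big(H(\bm{x}) + \frac{1+J}{\beta} S(\bm{x})\Big) \;-\; \frac{1}{2}\sum_{k=1,2}\sum_{u=1}^{q}\log x_u^{(k)} \;+\; \text{remainder},
\]
and factoring out $-N\beta/(1+J)$ identifies the first parenthesis as $F_{\beta,J}(\bm{x})$ and the remaining $\bm{x}$-dependent terms as $\frac{1}{N} G_{N,\beta,J}(\bm{x})$.

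The main technical points, none of which is deep, are the bookkeeping of constants absorbed into $\widehat{Z}_{N,\beta,J}$, the quantitative control of the Stirling remainder needed to reach the stated order, and the boundary behavior of the formula: when some coordinate $x_u^{(k)}$ vanishes, the corresponding $(Nx_u^{(k)})! = 0! = 1$ drops out of the multinomial and the $\log x_u^{(k)}$ term should be read as vacuous, so the identity is really to be understood on the interior of $\Xi_N^2$, which carries essentially all the mass relevant for the landscape analysis that follows. Beyond these accounting issues the argument is a direct algebraic computation.
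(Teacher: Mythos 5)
Your proof is correct and follows essentially the same route as the paper's: express the Boltzmann weight of any configuration realizing $\bm{x}$ via the empirical magnetization, multiply by the product of two multinomial coefficients, apply Stirling to produce the entropy term and the logarithmic correction, and absorb the $\bm{x}$-independent constants into $\widehat{Z}_{N,\beta,J}$. The paper writes the pair-count as $\tfrac{1}{2}(Nx_i^{(k)})(Nx_i^{(k)}-1)$ rather than via your explicit identity for $\sum_{i<j}\mathbf{1}\{\sigma_i^{(k)}=\sigma_j^{(k)}\}$, but this is the same algebraic observation.
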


We give a proof in Appendix \ref{sec:app A}. We note that $F_{\beta,\,J}$
indeed corresponds to the free-energy of the spin system and dominate
the energy landscape associated with the empirical magnetization.
The role of $G_{N,\,\beta,\;J}$ is limited to the sub-exponential
factor, and hence will be neglected in the remainder of this paper.
Our primary concern is the analysis of the function $F_{\beta,\,J}$.

\subsection{Two extreme cases}

In this section, we first discuss two extreme cases to explain the
main objective of this paper.
\begin{enumerate}
\item $J=\infty$ (i.e., $J_{11}=J_{22}=0$ and $J_{12}=1$) ; no component-wise
interaction,
\item $J=0$ (i.e., $J_{12}=0$ and $J_{11}=J_{22}=1$) ; no inter-component
interaction.
\end{enumerate}
\noindent In both cases, the calculations are similar. However, there
is a considerable difference between the two cases. To introduce these
results, we define a function $\xi:\left(0,\frac{1}{2}\right)\rightarrow[0,\infty)$
by
\begin{equation}
\xi(x):=\frac{1}{1-3x}\log\frac{1-2x}{x}.\label{eq:xi_invtem}
\end{equation}
The function $\xi(x)$ has a unique global minimum; thus, we denote
this value by $\beta_{1}>0$. We also define
\begin{equation}
\beta_{2}:=\frac{2(q-1)}{q-2}\log(q-1)=4\log2\;\;\;\text{and}\;\;\;\beta_{3}:=3,\label{eq:beta_2,3}
\end{equation}
which will be used later. Note that the\textbf{ }$\beta_{2}$ is introduced
in \cite{Ellis,beta2-2,Ellis Wang,J.Lee CWP} and $\beta_{1}\approx2.7465<\beta_{2}\approx2.7726<\beta_{3}$.
For a given $\beta>\beta_{1}$, $\beta=\xi(x)$ has two solutions
because $\xi(x)$ is convex. In this case, we denote the smaller solution
by $x_{s}=x_{s}(\beta)$ and the larger one by $x_{l}=x_{l}(\beta)$.
Then, we define sets
\begin{align*}
\mathfrak{S} & =\mathfrak{S}(\beta)=\left\{ \text{permutations of }(x_{s},x_{s},1-2x_{s})\right\} ,\\
\mathfrak{L} & =\mathfrak{L}(\beta)=\left\{ \text{permutations of }(x_{l},x_{l},1-2x_{l})\right\} .
\end{align*}
Furthermore, we define the set of this kind for the two-component
case as
\begin{align}
\mathfrak{S}^{2} & =\mathfrak{S}^{2}(\beta):=\left\{ \big(\bm{x}_{s,2},\bm{x}_{s,2}\big)\in\Xi_{N}^{2}:\bm{x}_{s,2}\in\mathfrak{S}\right\} ,\nonumber \\
\mathfrak{L}^{2} & =\mathfrak{L}^{2}(\beta):=\big\{\big(\bm{x}_{l,2},\bm{x}_{l,2}\big)\in\Xi_{N}^{2}:\bm{x}_{l,2}\in\mathfrak{L}\big\}.\label{eq:Nota critical}
\end{align}
We denote the point $\big(\bm{x}_{s,2},\bm{x}_{s,2}\big)\in\mathfrak{S}^{2}$
by $\bm{x_{s}}$, and the point $\big(\bm{x}_{l,2},\bm{x}_{l,2}\big)\in\mathfrak{L}^{2}$
by $\bm{x_{l}}.$ Now, we can define the following definitions.
\begin{defn}
The two components are said to be\textit{ synchronized} if all the
local minima and lowest saddles of $F_{\beta,\;J}$ belong to either
$\mathfrak{S}^{2}$ or $\mathfrak{L}^{2}$. Otherwise, the two components
are said to be \textit{desynchronized.}

Therefore, under the synchronization regime, the two components are
synchronized in the course of metastable transition from a local minima
to a global minima. On the other hand, the two components are desynchronized
in the course of metastable transition if we are in the desynchronization
regime. Then, we have the following results for the extreme cases.
\end{defn}

\begin{thm}[Main Theorem]
\label{thm:-extreme case} Suppose that $q=3$.
\begin{enumerate}
\item If there is no component-wise interaction in each component, then
the two components are synchronized.
\item If there is no inter-component interaction between the two components,
then the two components are desynchronized.
\end{enumerate}
\end{thm}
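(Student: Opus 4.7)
The plan is to treat the two extreme cases separately, using the explicit form of $F_{\beta,J}$ from Proposition~\ref{Prop Ham} to reduce each problem to the known energy landscape of the single-component three-state Curie--Weiss--Potts model from \cite{q=00003D3,Lan - Seo CWP q=00003D3}. Throughout I let $f_{\mathrm{CW}}(\bm{a}) = -\tfrac{1}{2}\sum_i a_i^2 + \tfrac{1}{\beta}\sum_i a_i \log a_i$ denote the one-component free energy, whose local minima (for $\beta>\beta_1$) are the three permutations in $\mathfrak{L}$ and whose lowest saddles are the permutations in $\mathfrak{S}$.

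For part (1), taking $J_{11}=J_{22}=0$ and $J_{12}=1$ in \eqref{eq:ham} yields the effective free energy
\[
\tilde{F}(\bm{x}) \;=\; -\sum_{i=1}^{3} x_i^{(1)} x_i^{(2)} \;+\; \frac{1}{\beta}\sum_{k=1,2}\sum_{i=1}^{3} x_i^{(k)}\log x_i^{(k)}.
\]
The key step is the change of variables $\bm{y} = (\bm{x}^{(1)}+\bm{x}^{(2)})/2$ and $\bm{z} = (\bm{x}^{(1)}-\bm{x}^{(2)})/2$, under which the interaction becomes $-\sum_i y_i^2+\sum_i z_i^2$ and the entropic part becomes $\sum_i[(y_i+z_i)\log(y_i+z_i)+(y_i-z_i)\log(y_i-z_i)]$. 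A direct computation gives
\[
\partial_{z_i}\tilde{F}(\bm{y},\bm{z}) \;=\; 2z_i + \frac{1}{\beta}\log\frac{y_i+z_i}{y_i-z_i},
\]
which depends only on $z_i$ (for fixed $\bm{y}$), vanishes at $z_i=0$, and is strictly increasing in $z_i$. At any critical point the Lagrange condition coming from $\sum_i z_i = 0$ forces $\partial_{z_i}\tilde{F}\equiv\mu$ for all $i$; monotonicity then makes all $z_i$ share the sign of $\mu$, and $\sum_i z_i=0$ forces $\bm{z}=\bm{0}$. Hence every interior critical point of $\tilde{F}$ is diagonal, and on the diagonal $\tilde{F}(\bm{x},\bm{x})=2 f_{\mathrm{CW}}(\bm{x})$ with block-diagonal Hessian whose transverse $\bm{z}$-block is strictly positive definite. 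The one-component classification then places all local minima of $\tilde F$ in $\mathfrak{L}^2$ and all lowest saddles in $\mathfrak{S}^2$.

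For part (2), the vanishing of $J_{12}$ gives the clean decomposition $F_{\beta,0}(\bm{x})=f_{\mathrm{CW}}(\bm{x}^{(1)})+f_{\mathrm{CW}}(\bm{x}^{(2)})$, so critical points are pairs of single-component critical points with Morse index equal to the sum of the two indices. For $\beta>\beta_1$ the set $\mathfrak{L}$ contains three distinct permutations, so choosing any two distinct $\bm{a},\bm{b}\in\mathfrak{L}$ makes $(\bm{a},\bm{b})$ a local minimum of $F_{\beta,0}$ which lies off the diagonal and hence in neither $\mathfrak{S}^2$ nor $\mathfrak{L}^2$; this alone establishes desynchronization. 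An analogous pair $(\bm{s},\bm{m})$ with $\bm{s}\in\mathfrak{S}$ and $\bm{m}\in\mathfrak{L}$ furnishes an off-diagonal lowest saddle.

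I expect the main obstacle to be the rigorous exclusion of non-diagonal critical points in part (1). The bare symmetry $\bm{z}\mapsto-\bm{z}$ merely swaps the two components, and even combined with the $S_3$-symmetry on spin labels it still leaves a large locus to inspect. The strength of the proposed approach is that it upgrades symmetry to the quantitative statement that $\partial_{z_i}\tilde F$ is a strictly monotone function of $z_i$ alone which vanishes at zero, so that together with the linear constraint $\sum_i z_i=0$ all off-diagonal critical points are forbidden in a single stroke. Once this reduction is in place, both halves of the theorem reduce to a direct appeal to the known $q=3$ single-component landscape.
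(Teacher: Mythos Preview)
Your argument is essentially correct and, for part (1), genuinely different from the paper's. One factual slip: you have $\mathfrak S$ and $\mathfrak L$ interchanged. In the paper's convention $x_s<x_l$ are the two roots of $\beta=\xi(x)$, the one-component minima are the permutations of $(x_s,x_s,1-2x_s)\in\mathfrak S$ and the index-one saddles lie in $\mathfrak L$; the paper's Theorems~\ref{thm:J=00003Dinfty} and~\ref{thm:J=00003D0} are explicit about this. The swap is harmless for the synchronization conclusion, which only asks that the relevant critical points lie in $\mathfrak S^2\cup\mathfrak L^2$, but you should correct it.

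For part (1) the paper proceeds by first reducing to the ansatz $(s,s,1-2s,t,t,1-2t)$, using that the system $y=\tfrac1\beta\log x+u$, $x=\tfrac1\beta\log y+v$ has at most two solutions, and then forcing $s=t$ via the monotonicity of $x\mapsto 1-3x+\tfrac1\beta\log\tfrac{1-2x}{x}$; afterwards it computes the four Hessian eigenvalues explicitly. Your $(\bm y,\bm z)$ change of variables bypasses the ansatz entirely: the observation that each $\partial_{z_i}\tilde F=2z_i+\tfrac1\beta\log\tfrac{y_i+z_i}{y_i-z_i}$ is strictly increasing in $z_i$ and vanishes at $z_i=0$, together with the Lagrange condition and $\sum_i z_i=0$, rules out off-diagonal critical points in one stroke. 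This is cleaner and, since the Hessian is block-diagonal in $(\bm y,\bm z)$ with a positive-definite $\bm z$-block, the Morse indices are read off directly from the single-component picture you cite, rather than from a bare eigenvalue computation. The cost is that you are invoking the $q=3$ one-component classification from \cite{q=00003D3,Lan - Seo CWP q=00003D3} as a black box, whereas the paper is self-contained.

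For part (2) your decomposition $F_{\beta,0}=f_{\mathrm{CW}}(\bm x^{(1)})+f_{\mathrm{CW}}(\bm x^{(2)})$ and the off-diagonal minimum $(\bm a,\bm b)$ with $\bm a\ne\bm b$ is exactly the paper's mechanism (its nine minima $\bm p_1$ up to permutation, six of which are off-diagonal). Note that both the paper's statement and yours tacitly require $\beta>\beta_1$; for $\beta\le\beta_1$ the only critical point is $\bm q_3$ and there is nothing to desynchronize.
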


\subsection{Synchronization--desynchronization phase transition}

In the previous section, we defined synchronization between the components
and stated the results of the two extreme cases. Naturally, it can
be expected that synchronization occurs when $J$ is sufficiently
large. However, when $J$ is sufficiently small, one expected that
synchronization to be broken. In this section, we introduce results
on the boundaries of the synchronization when $q=2$ and $q=3$, respectively.
The Ising case (i.e., $q=2$) can be handled as in \cite{Bipartite q=00003D2}
and it can be restated as the following theorem, which is more comprehensive;
the results of the theorem are stated in Theorem \ref{thm:class of critical points q=00003D2}. 
\begin{thm}
Suppose that $q=2.$ Define a function $\zeta_{1}:[2,\infty)\rightarrow[0,\infty)$
by
\begin{equation}
\zeta_{1}(\beta):=\frac{\beta-2}{\beta+2}.\label{eq:zeta_1}
\end{equation}
 If $J>\zeta_{1}(\beta)$, then the two components are synchronized.
However, if $J<\zeta_{1}(\beta)$, then the two components are desynchronized.
\end{thm}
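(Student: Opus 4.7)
The plan is to reduce the two-component Curie--Weiss model with $q=2$ to a two-variable optimization problem and classify the critical points of $F_{\beta,J}$ via a mean-value argument.

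First, I would parametrize $\bm{x}^{(k)} = ((1+m_k)/2,(1-m_k)/2)$ via the magnetization $m_k = x_1^{(k)} - x_2^{(k)} \in [-1,1]$ and rewrite the free energy from Proposition~\ref{Prop Ham} as
$$
F_{\beta,J}(m_1, m_2) = -\frac{m_1^2 + m_2^2}{4} - \frac{J}{2}m_1 m_2 + \frac{1+J}{\beta}\sum_{k=1,2}\Bigl[\frac{1+m_k}{2}\log\frac{1+m_k}{2} + \frac{1-m_k}{2}\log\frac{1-m_k}{2}\Bigr]
$$
up to an additive constant. Setting $\nabla F_{\beta,J} = 0$ yields the mean-field self-consistency system
$$
m_k = \tanh\!\Bigl(\frac{\beta(m_k + J m_{3-k})}{2(1+J)}\Bigr), \qquad k = 1, 2.
$$

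Next, subtracting the two equations and applying the mean value theorem to $\tanh$ produces
$$
(m_1 - m_2)\Bigl[1 - \frac{\beta(1-J)}{2(1+J)}\operatorname{sech}^2(\eta)\Bigr] = 0
$$
for some $\eta$ between the two arguments of $\tanh$. Since $\operatorname{sech}^2 \le 1$, the bracket is strictly positive as soon as $\frac{\beta(1-J)}{2(1+J)} < 1$, which rearranges to $J > \zeta_1(\beta)$. Consequently, for $J > \zeta_1(\beta)$ every critical point of $F_{\beta,J}$ is forced onto the synchronized diagonal $m_1 = m_2$, so all local minima and lowest saddles are synchronized.

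For $J < \zeta_1(\beta)$, I would introduce the antisymmetric ansatz $m_1 = -m_2 = \tilde m$; the coupled system then collapses to the scalar Curie--Weiss equation $\tilde m = \tanh(\beta' \tilde m / 2)$ with effective inverse temperature $\beta' := \beta(1-J)/(1+J) > 2$, which produces non-trivial solutions $\pm \tilde m^*$. The restriction of $F_{\beta,J}$ to the antisymmetric subspace coincides, up to a positive multiplicative constant, with a single-component Curie--Weiss free energy at inverse temperature $\beta'$, whose classical analysis places $\pm\tilde m^*$ strictly below the origin in energy. Thus $(\pm\tilde m^*, \mp\tilde m^*)$ are off-diagonal critical points with $F_{\beta,J}(\pm\tilde m^*, \mp\tilde m^*) < F_{\beta,J}(0,0)$. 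Examining the full Hessian at these points shows that, depending on the sign of the $(1,1)$-direction eigenvalue, they are either off-diagonal local minima or off-diagonal saddles lying strictly below the origin (which in this regime has become a local maximum), and in either case the system is desynchronized.

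The main obstacle is the classification step in the regime $J < \zeta_1(\beta)$: beyond the symmetric and antisymmetric critical loci one must rule out stray off-diagonal critical points of mixed type, and verify that the antisymmetric saddle genuinely serves as the \emph{lowest} saddle separating the two synchronized minima $\pm(m^*, m^*)$. This requires a careful path-energy comparison between the diagonal route through the origin and the off-diagonal route through the antisymmetric critical set, and constitutes the technical core of the more detailed Theorem~\ref{thm:class of critical points q=00003D2}.
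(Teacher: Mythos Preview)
Your synchronization argument for $J>\zeta_1(\beta)$ is correct and genuinely different from the paper's. The paper rewrites the critical-point equations as the coupled system $\Theta(s)=t$, $\Theta(t)=s$ with $\Theta(s)=\tfrac{1}{J}\bigl(-s+\tfrac{1+J}{\beta}\log\tfrac{1+s}{1-s}\bigr)$, and then counts intersections by analyzing the sign of $\Theta'(0)$ (Proposition~\ref{lem:G(s) property} and Lemma~\ref{lem:equiv condition}). Your mean-value / contractivity argument---subtracting the two $\tanh$ equations and using $\operatorname{sech}^2\le 1$---bypasses this geometry entirely and forces $m_1=m_2$ in one line; it is more elementary and arguably cleaner for this direction. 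What the paper's method buys in exchange is a \emph{complete} enumeration of critical points (Theorem~\ref{thm:class of critical points q=00003D2}), including the finer threshold $\zeta_2(\beta)$ at which additional off-diagonal minima appear; your approach does not see $\zeta_2$ without further work.

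For the desynchronization direction your outline is essentially right but the closing step can be made much simpler than the path-energy comparison you propose. Once you have checked that for $J<\zeta_1(\beta)$ the origin is a local \emph{maximum} (both Hessian eigenvalues $\tfrac{4(1+J)}{\beta}-2\pm 2J$ are negative exactly when $J<\zeta_1(\beta)$), and that the diagonal points $\pm(m^*,m^*)$ are local minima (the $(1,-1)$-eigenvalue exceeds the $(1,1)$-eigenvalue, which is already positive since these are minima of the diagonal restriction), you are done: there are \emph{no} saddles whatsoever on the diagonal, yet by Morse theory a saddle connecting the two diagonal minima must exist. Hence the lowest saddle is necessarily off-diagonal, and the system is desynchronized. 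You do not need to locate the antisymmetric saddle explicitly, nor compare competing path energies, nor rule out ``stray'' mixed-type critical points for this conclusion---those issues only matter for the full classification in Theorem~\ref{thm:class of critical points q=00003D2}, not for the synchronized/desynchronized dichotomy itself.
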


Nevertheless, for $q\ge3$, the synchronization--desynchronization
phase transition is very complicated to analyze. We explain this for
the case $q=3$. We define functions $\psi_{1},\;\psi_{2},\;\psi_{3}:[0,\infty)\rightarrow[0,\infty)$
by\textbf{ }
\begin{align}
\psi_{1}(\beta) & =\frac{\beta-\frac{1}{x_{l}}}{\beta+\frac{1}{x_{l}}}\cdot\mathbf{1}\{\beta\geq\beta_{3}\},\;\;\;\;\;\psi_{2}(\beta)=\frac{\beta-\frac{1}{3x_{l}(1-2x_{l})}}{\beta+\frac{1}{3x_{l}(1-2x_{l})}}\cdot\mathbf{1}\{\beta_{1}\leq\beta\leq\beta_{3}\},\;\;\;\text{and}\label{eq:psi_1,2,3}\\
\psi_{3}(\beta) & =\frac{\beta-3+\sqrt{25\beta^{2}-50\beta+1}}{2(1+6\beta)}.\nonumber 
\end{align}
A constant $J_{c}\approx0.2419$ will be specified later in Theorem
\ref{thm:P+R+S thm}. Then, we have the following main results.

\begin{figure}
\includegraphics[width=7cm,height=7cm]{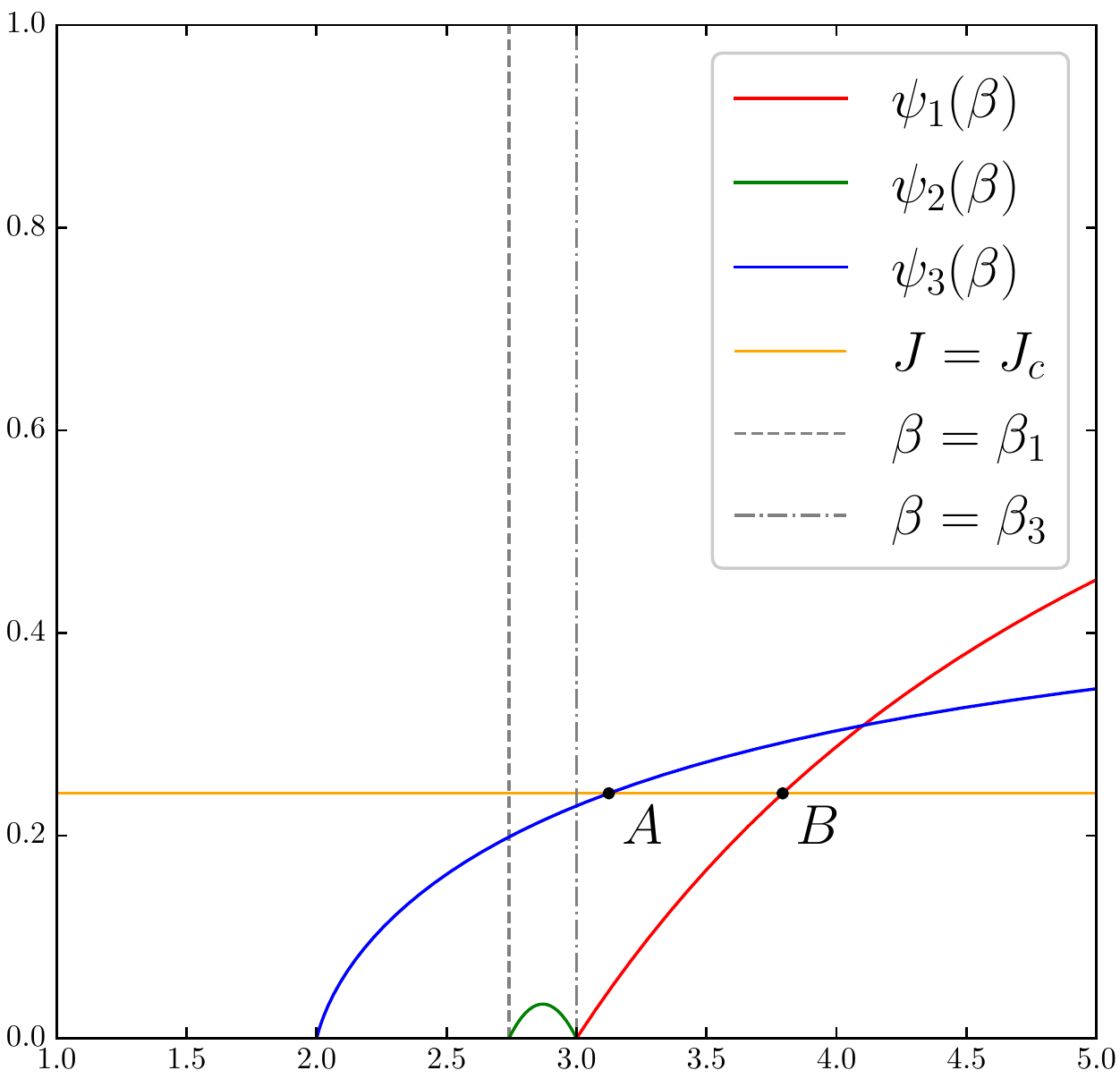}\qquad{}\includegraphics[width=7cm,height=7cm]{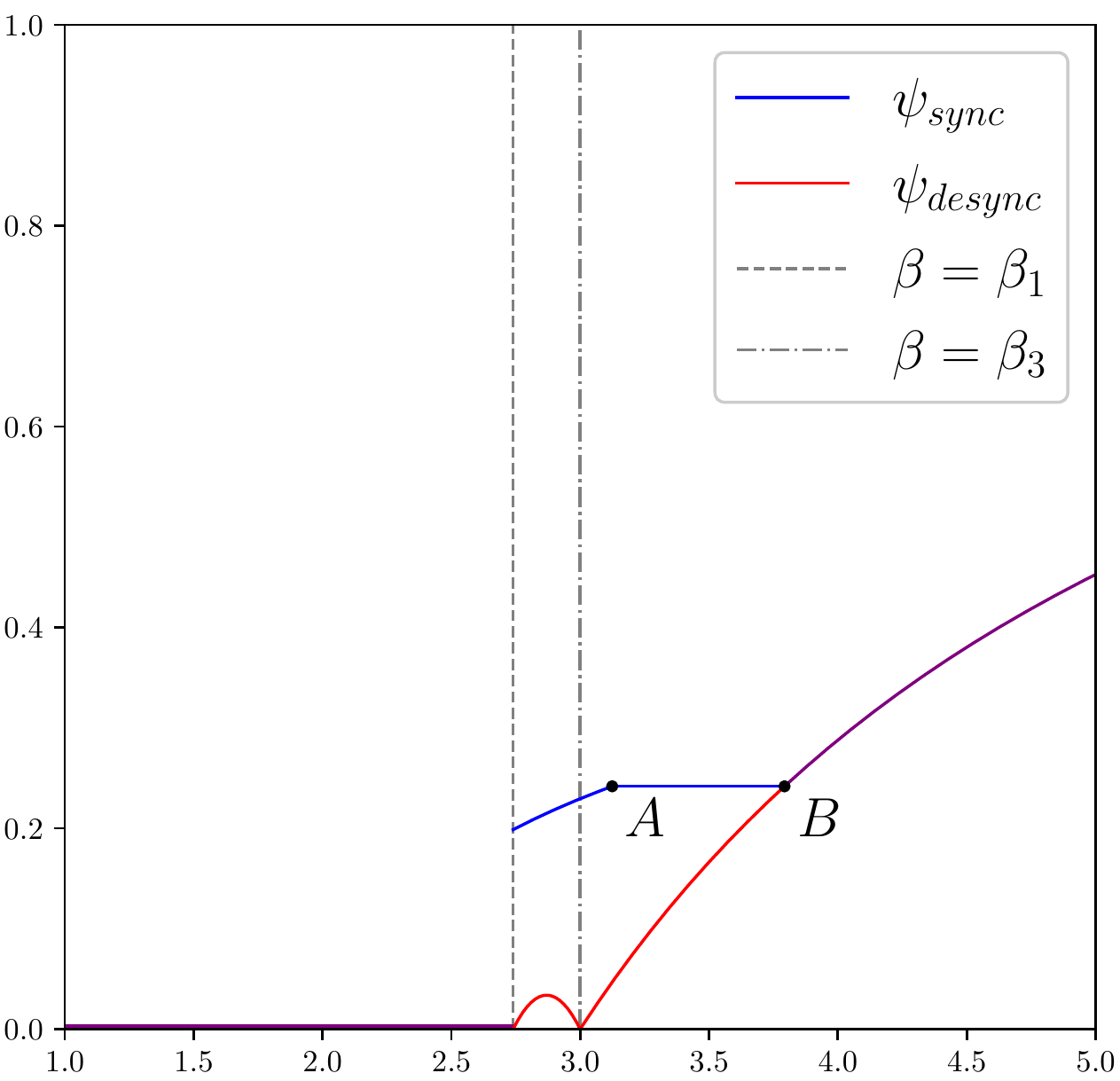}\caption{\label{fig:Phase}These are diagrams of functions of the inverse temperature
$\beta$, and the $\beta$-value of $A\approx3.1255$ and $B\approx3.8290$.
The purple curve represents the overlapping of functions $\psi_{s}$
and $\psi_{d}$. }
\end{figure}

\begin{thm}[Main Theorem]
\label{thm:sync-desync phase} We define functions $\psi_{s},\;\psi_{d}:[0,\infty)\rightarrow[0,\infty)$
by
\begin{equation}
\psi_{s}(\beta)=\begin{cases}
0 & \beta\leq\beta_{1},\\
\max[\psi_{1}(\beta),\min\{J_{c},\psi_{3}(\beta)\}] & \beta>\beta_{1},
\end{cases}\;\text{and}\;\;\;\psi_{d}(\beta)=\psi_{1}(\beta)+\psi_{2}(\beta).\label{eq:psi + and -}
\end{equation}
If $J>\psi_{s}(\beta)$, then the two components are synchronized.
However, if $J<\psi_{d}(\beta)$ , then the two components are desynchronized.
\end{thm}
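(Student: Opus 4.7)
The plan is to reduce the theorem to an exhaustive critical-point analysis of $F_{\beta,\,J}$ on $\Xi^{2}$, since Proposition \ref{Prop Ham} shows that $\nu_{N,\,\beta}$ concentrates exponentially at minima of $F_{\beta,\,J}$ while $G_{N,\,\beta,\,J}$ contributes only a sub-exponential correction. The two-component swap $(\bm{x}^{(1)},\bm{x}^{(2)})\mapsto(\bm{x}^{(2)},\bm{x}^{(1)})$ and the simultaneous $S_{3}$-action on spin labels organize the critical points into orbits and, more importantly, block-diagonalize the Hessian at any synchronized critical point.

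First I would write the critical-point system $\nabla F_{\beta,\,J}=\bm{0}$ in the reduced coordinates of Notation \ref{nota:Notation} and identify the \emph{synchronized} critical points $\bm{x}^{(1)}=\bm{x}^{(2)}=\bm{y}$: they reduce to critical points of the one-component Curie--Weiss--Potts free energy and therefore consist of the uniform point together with, for $\beta>\beta_{1}$, the three permutations in $\mathfrak{S}$ and the three in $\mathfrak{L}$, whose structure is known from \cite{Ellis,J.Lee CWP,Lan - Seo CWP q=00003D3}. Using the symmetric/antisymmetric decomposition $\bm{u}^{(1)}=\bm{v}+\bm{w}$, $\bm{u}^{(2)}=\bm{v}-\bm{w}$ of perturbations, the Hessian quadratic form of $F_{\beta,\,J}$ at $(\bm{y},\bm{y})$ splits into a synchronized block equal to $(1+J)$ times the one-component Hessian at $\bm{y}$ and inverse temperature $\beta$, and an antisymmetric block equal to $(1-J)$ times the one-component Hessian at $\bm{y}$ but evaluated at the rescaled inverse temperature $\beta_{\mathrm{eff}}=\beta(1-J)/(1+J)$. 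Applied at $\bm{y}\in\mathfrak{L}$, the antisymmetric eigenvalue in the $1\leftrightarrow2$ symmetry-breaking direction vanishes exactly at $J=\psi_{1}(\beta)$ (active for $\beta\geq\beta_{3}$), and the antisymmetric eigenvalue in the radial direction $(1,1,-2)$ vanishes exactly at $J=\psi_{2}(\beta)$ (active for $\beta_{1}\leq\beta\leq\beta_{3}$); these are the two possible antisymmetric bifurcations off the $\mathfrak{L}^{2}$ locus.

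For the desynchronization half, $J<\psi_{d}=\psi_{1}+\psi_{2}$, I would observe that in this regime at least one antisymmetric eigenvalue at $\mathfrak{L}^{2}$ is strictly negative, so the implicit-function theorem produces a nearby genuinely desynchronized critical point; a direct $F$-value computation along the bifurcated branch then shows that it is either a new local minimum outside $\mathfrak{S}^{2}\cup\mathfrak{L}^{2}$ or a saddle whose $F$-value lies strictly below that of the synchronized $\mathfrak{L}^{2}$-saddle, breaking the synchronization condition. For the synchronization half, $J>\psi_{s}$, I would verify (i) no local minimum exists outside $\mathfrak{S}^{2}\cup\mathfrak{L}^{2}$, by ruling out desynchronized critical points of Morse index zero via Hessian positivity, and (ii) the lowest saddle still lies in $\mathfrak{L}^{2}$, by comparing its $F$-value with that of every competing desynchronized saddle. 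The formula $\psi_{3}(\beta)$ and the constant $J_{c}$ emerge from (ii): $\psi_{3}(\beta)$ is the $J$-threshold at which one bifurcated family of desynchronized saddles attains the same $F$-value as the synchronized $\mathfrak{L}^{2}$-saddle, while $J_{c}$ is a $\beta$-independent bound past which that family is globally non-competitive; the $\max[\psi_{1},\min\{J_{c},\psi_{3}\}]$ structure of $\psi_{s}$ is precisely this case split.

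The main obstacle will be the genuinely four-dimensional nature of the desynchronized critical set. Unlike the $q=2$ analysis of \cite{Bipartite q=00003D2}, where each component is a scalar magnetization and the landscape is effectively two-dimensional, here the critical equations form a nonlinear transcendental system in four unknowns whose solution branches appear, move, and merge as $(\beta,J)$ crosses each curve $\psi_{i}$. Cataloging these branches, parametrizing each with a convenient scalar, and evaluating $F_{\beta,\,J}$ along them in closed form in order to drive the comparisons behind $\psi_{3}$, $J_{c}$, and the summing structure $\psi_{d}=\psi_{1}+\psi_{2}$ is technically delicate; the gap between $\psi_{s}$ and $\psi_{d}$ in the theorem statement reflects exactly the range of $(\beta,J)$ in which these energetic comparisons cannot be closed into a single sharp threshold with the present techniques.
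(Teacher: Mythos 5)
Your overall framework—reducing the theorem to an exhaustive critical-point analysis of $F_{\beta,\,J}$, splitting the Hessian at a synchronized critical point into symmetric and antisymmetric blocks, and recognizing the antisymmetric block as $(1-J)$ times the one-component Hessian at effective inverse temperature $\beta(1-J)/(1+J)$—is sound and in fact gives a tidy conceptual reorganization of the paper's Lemma \ref{lem:eigenvalues}. Your identification of $\psi_{1}$ and $\psi_{2}$ with the two antisymmetric bifurcation thresholds at $\mathfrak{L}^{2}$ (the $1\leftrightarrow2$ swap direction for $\beta\geq\beta_{3}$, the radial $(1,1,-2)$ direction for $\beta_{1}\leq\beta\leq\beta_{3}$) is correct and can be checked directly against the eigenvalues in the proof of Lemma \ref{lem:eigenvalues}. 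The desynchronization half of the argument also follows from this: once an antisymmetric eigenvalue at $\mathfrak{L}^{2}$ goes negative, $\mathfrak{L}^{2}$ can no longer host the lowest saddle, and by Morse theory the lowest saddle connecting the remaining minima must lie outside $\mathfrak{S}^{2}\cup\mathfrak{L}^{2}$; the paper uses exactly this reasoning in Theorems \ref{thm:low temp psi_1}(2) and \ref{thm:mid temp psi_2}(2), with no $F$-value comparison or implicit-function-theorem bifurcation needed.

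The serious gap is in your account of $\psi_{3}$ and $J_{c}$. You attribute them to an \emph{energetic} competition: $\psi_{3}$ as the $J$-threshold at which a bifurcated family of desynchronized saddles matches the $F$-value of the $\mathfrak{L}^{2}$-saddle, and $J_{c}$ as a bound past which that family is non-competitive. This is not the mechanism, and I do not see how that comparison could be closed in the four-dimensional transcendental system (you acknowledge this yourself). What actually happens is a \emph{nonexistence} argument. Any desynchronized critical point must be assembled from three intersection points of the curves $y=\Phi(x)+u$ and $x=\Phi(y)+v$ whose $x$-coordinates and $y$-coordinates each sum to $1$. Theorem \ref{thm:P+R+S thm} shows that for $J\geq J_{c}$ the relevant coordinate sum $(P+R+S)(u)$ is increasing and convex in $u$; Theorem \ref{thm:psi_3 boundary} shows that for $J>\psi_{3}(\beta)$ this sum is strictly greater than $1$ for all $u$; and Theorem \ref{thm:PQRS_12} handles the remaining $u\neq v$ configurations. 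Together with Proposition \ref{prop:psi_1 boundary} (which relates $J>\psi_{1}$ to the inequality $P+2Q>1$ at tangency), these force the constraint $P+R+S=1$ to be unsatisfiable, so no desynchronized local minimum or saddle exists at all. The constant $J_{c}$ appears in Lemma \ref{lem:P+R+S }(3) as the positive root of $(1+x)\bigl(\frac{1}{1-x}-\frac{1}{2+x}\bigr)-\log\frac{2+x}{1-x}$, which enforces $\widetilde P-J-1\geq0$ so that the derivative formula \eqref{eq:apr} is manifestly positive; $\psi_{3}$ comes from a Lagrange-multiplier minimization of $P+R+S$ subject to the stationarity constraint, yielding the lower bound $\frac{(1+J)(2+J)}{\beta(1-2J)(1+3J)}>1$. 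Neither quantity ever references $F$-values. Relatedly, the ``summing structure'' $\psi_{d}=\psi_{1}+\psi_{2}$ is not a deep combination to be derived from branch cataloging: $\psi_{1}$ and $\psi_{2}$ carry indicator functions with disjoint supports ($\beta\geq\beta_{3}$ versus $\beta_{1}\leq\beta\leq\beta_{3}$), so the sum is just a compact piecewise notation for the two antisymmetric thresholds you already identified. As written, your synchronization half (step (ii), comparing $F$-values of competing desynchronized saddles) would not go through, because the needed saddles do not exist in that regime and the argument must instead establish precisely that.
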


We conjecture that there exists $\psi:[0,\,\infty)\rightarrow[0,\,\infty)$
such that the synchronization happens if and only if $J>\psi(\beta)$.
We proved this by verifying that $\psi(\beta)=0$ for $\beta\le\beta_{1}$
and $\psi(\beta)=\psi_{1}(\beta)$ for $\beta>\beta_{c}$ where $\beta_{c}\approx3.8290$.
This type of behavior might hold for any multi-component Curie--Weiss--Potts
model, but conditioned on the analysis and results of this paper,
it will not be easy to characterize it. This question will be pursued
future studies.

\section{\label{sec3}Ising model on bipartite graph}

In this section, we deal with the Ising case. We do this because we
present a more comprehensive proof and provide simplified versions
of the calculations when $q=3$. This emphasizes that there is a significant
difference between $q=2$ and $q=3$.

From Proposition \ref{Prop Ham}, the function $F_{\beta,\;J}$ for
$q=2$ is given by
\begin{align}
F_{\beta,\;J}(x,y) & =-\frac{{1}}{2}\left\{ x^{2}+(1-x)^{2}+y^{2}+(1-y)^{2}\}-J\{xy+(1-x)(1-y)\right\} \label{eq:F_beta J q=00003D2}\\
 & +\frac{{1+J}}{\beta}\left\{ x\log x+(1-x)\log(1-x)+y\log y+(1-y)\log(1-y)\right\} .\nonumber 
\end{align}
Here, we used the notation $x$ for $x_{1}^{(1)}$ and $y$ for $x_{1}^{(2)}$
to simplify the expression. To investigate the critical points of
$F_{\beta,\;J}(x)$, computing the partial derivatives of the function
yields
\begin{align}
 & \frac{\partial F_{\beta,\;J}}{\partial x}\left(x,y\right)=(1-2x)+J(1-2y)+\frac{{1+J}}{\beta}\log\left(\frac{{x}}{1-x}\right)=0,\label{eq:deriv q=00003D2}\\
 & \frac{\partial F_{\beta,\;J}}{\partial y}\left(x,y\right)=(1-2y)+J(1-2x)+\frac{1+J}{\beta}\log\left(\frac{{y}}{1-y}\right)=0.\nonumber 
\end{align}
An elementary computation shows that the Hessian matrix of the function
$F_{\beta,\;J}$ is
\begin{equation}
\nabla^{2}F_{\beta,\;J}(x,y)=\left(\begin{array}{cc}
\frac{{1+J}}{\beta x(1-x)}-2 & -2J\\
-2J & \frac{{1+J}}{\beta y(1-y)}-2
\end{array}\right).\label{eq:Hessian q=00003D2}
\end{equation}
Since the function $(1-2x)+J(1-2y)+\frac{{1+J}}{\beta}\log\left(\frac{{x}}{1-x}\right)$
is point symmetric at $\left(\frac{{1}}{2},\frac{1}{2}\right)$, substituting
$x=\frac{{s+1}}{2}$ and $y=\frac{{t+1}}{2}$ for $-1\leq s,t\leq1$
into equations in \eqref{eq:deriv q=00003D2} yields
\begin{equation}
\Theta(s)=t\;\;\;\text{and}\;\;\;\Theta(t)=s,\label{eq:Theta(s)=00003Dt,Theta(t)=00003Ds}
\end{equation}
where
\begin{equation}
\Theta(s)=\frac{{1}}{J}\left(-s+\frac{1+J}{\beta}\log\frac{{1+s}}{1-s}\right).\label{eq:Theta(s)}
\end{equation}

Now, we define a function $\zeta_{2},:[2,\infty)\rightarrow[0,\infty)$
by
\begin{equation}
\zeta_{2}(\beta)=\frac{\sqrt{\beta(\beta-2)}-2\log\{(\sqrt{\beta}+\sqrt{\beta-2})/\sqrt{2}\}}{\sqrt{\beta(\beta-2)}+2\log\{(\sqrt{\beta}+\sqrt{\beta-2})/\sqrt{2}\}}.\label{eq:zeta_2}
\end{equation}
Recall that $\zeta_{1}$ was defined in \eqref{eq:zeta_1}. By elementary
computations, we obtain that 
\[
\zeta_{1}(2)=\zeta_{2}(2)=0,\;\;\;\zeta_{1}(\beta)>\zeta_{2}(\beta)\;\text{for all}\;\beta>2,
\]
and
\[
\lim_{\beta\to\infty}\zeta_{1}(\beta)=\lim_{\beta\to\infty}\zeta_{2}(\beta)=1.
\]
Then, we have the following proposition.
\begin{prop}
\label{lem:G(s) property}Consider the system of equations \eqref{eq:Theta(s)=00003Dt,Theta(t)=00003Ds}.
\begin{enumerate}
\item If $\Theta'(0)>1,$ then \eqref{eq:Theta(s)=00003Dt,Theta(t)=00003Ds}
has only one solution at the origin.
\item If $-1<\Theta'(0)<1,$ then \eqref{eq:Theta(s)=00003Dt,Theta(t)=00003Ds}
has three intersections.
\item If $\Theta'(0)<-1$, then \eqref{eq:Theta(s)=00003Dt,Theta(t)=00003Ds}
has at least five intersections. Moreover, if
\begin{equation}
\Theta\left(\gamma\right)<-\gamma,\;\;\text{where}\;\;\gamma:=\sqrt{\frac{\beta-2}{\beta}},\label{eq:5 to 9points}
\end{equation}
then \eqref{eq:Theta(s)=00003Dt,Theta(t)=00003Ds} has nine intersections.
\end{enumerate}
\end{prop}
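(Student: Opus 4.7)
The plan is to decouple the system \eqref{eq:Theta(s)=00003Dt,Theta(t)=00003Ds} through the odd auxiliary functions
\[
\Phi(s):=\Theta(s)+s\qquad\text{and}\qquad\Psi(s):=\Theta(s)-s.
\]
Adding and subtracting the two equations in \eqref{eq:Theta(s)=00003Dt,Theta(t)=00003Ds} shows that the system is equivalent to the pair $\Phi(s)=\Phi(t)$ and $\Psi(s)+\Psi(t)=0$. A direct computation gives $\Theta'(s)=\frac{1}{J}\bigl(-1+\frac{2(1+J)}{\beta(1-s^{2})}\bigr)$, which is even, strictly increasing in $|s|$, and minimized at $s=0$; consequently $\Phi'=\Theta'+1$ and $\Psi'=\Theta'-1$ share the same qualitative shape, with critical points of $\Phi$ at $\pm\eta$ where $\eta^{2}=1-\frac{2(1+J)}{\beta(1-J)}$ (when $\Theta'(0)<-1$) and critical points of $\Psi$ at $\pm\gamma$ where $\gamma^{2}=1-\frac{2}{\beta}$ (when $\beta>2$), with $\eta<\gamma$ whenever both exist.

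\emph{Parts (1) and (2).} Under $\Theta'(0)>-1$, i.e., $\beta<\frac{2(1+J)}{1-J}$, one has $\Phi'>0$ on $(-1,1)$, so $\Phi$ is strictly increasing and $\Phi(s)=\Phi(t)$ forces $s=t$. The system then collapses to the fixed-point equation $\Theta(s)=s$, which simplifies to the classical Curie--Weiss equation $\log\frac{1+s}{1-s}=\beta s$. A standard convexity argument yields only the trivial root for $\beta\le2$ (i.e., $\Theta'(0)\ge1$) and exactly three roots $\{0,\pm s_{*}\}$ for $\beta>2$ (i.e., $-1<\Theta'(0)<1$).

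\emph{Part (3).} Now $\Theta'(0)<-1$ implies $\beta>\frac{2(1+J)}{1-J}>2$, so the diagonal contributes $\{0,\pm s_{*}\}$. On the anti-diagonal $t=-s$, the system becomes $\Theta(s)=-s$, i.e., the Curie--Weiss-type equation $\log\frac{1+s}{1-s}=\frac{\beta(1-J)}{1+J}s$, with non-trivial roots precisely when its slope exceeds $2$, which is once more $\Theta'(0)<-1$. This produces $\pm s_{**}$ and the two additional intersections $(\pm s_{**},\mp s_{**})$, for at least five solutions.

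\emph{Part (4) and main obstacle.} The four further intersections must form a single orbit under the symmetries $(s,t)\leftrightarrow(t,s)$ and $(s,t)\leftrightarrow(-s,-t)$. Here $\Phi$ is non-monotonic: it decreases on $(-\eta,\eta)$ and increases outside, and for each $v\in(\Phi(\eta),-\Phi(\eta))$ the equation $\Phi(s)=v$ has three simple continuous roots $\alpha_{1}(v)<\alpha_{2}(v)<\alpha_{3}(v)$. An off-diagonal orbit corresponds to a non-trivial zero of some $H_{ij}(v):=\Psi(\alpha_{i}(v))+\Psi(\alpha_{j}(v))$. The plan is to use IVT on $(0,-\Phi(\eta))$: at $v=0$, $H_{13}(0)=0$ (the already counted anti-diagonal) and $H_{23}(0)=-2s_{**}<0$, while as $v\to-\Phi(\eta)^{-}$ both $H_{13}$ and $H_{23}$ converge to the common value $-\Psi(\eta)+\Psi(\alpha_{3}^{\max})$, where $\alpha_{3}^{\max}>s_{**}$ solves $\Phi(\alpha_{3}^{\max})=-\Phi(\eta)$. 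The hypothesis $\Theta(\gamma)<-\gamma$ rewrites as $\Phi(\gamma)<0$, equivalently $\gamma<s_{**}$; combined with $\Psi'>0\Leftrightarrow|s|>\gamma$ this gives $\Psi'(s_{**})>0$ and hence $H_{13}'(0)>0$, so $H_{13}$ strictly increases through its zero at $v=0$. A sign dichotomy on the common boundary value then produces either an additional zero of $H_{13}$ (if negative) or a first zero of $H_{23}$ (if positive); either case yields the sought orbit of four solutions. The main obstacle is the quantitative sign analysis at the endpoint $v=-\Phi(\eta)^{-}$ together with the parallel verification that \emph{exactly} one new orbit appears --- ruling out zeros of $H_{12}$ and additional zeros of $H_{13},H_{23}$ so that the total is precisely nine. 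This requires a delicate comparison between the two scales $\eta$ and $\gamma$ and careful tracking of the monotonicity structure of $\Psi$ on the sub-intervals containing each $\alpha_{i}(v)$, and is expected to constitute the bulk of the technical work.
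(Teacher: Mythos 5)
Your decoupling of the system via $\Phi(s)=\Theta(s)+s$ and $\Psi(s)=\Theta(s)-s$ is a genuinely different and cleaner route than the paper's. The paper observes that $\Theta$ is odd and $\Theta'$ is even with a unique minimum at $0$, declares parts (1) and (2) ``straightforward'', and for part (3) leans on a figure from Collet's bipartite Curie--Weiss paper. Your approach makes the diagonal and anti-diagonal reductions precise: $\Theta'(0)>-1$ gives $\Phi'>0$, so $\Phi(s)=\Phi(t)$ forces $s=t$ and reduces (1) and (2) to the scalar Curie--Weiss equation $\log\frac{1+s}{1-s}=\beta s$, whose root count ($1$ or $3$) is classical; on the anti-diagonal $\Theta(s)=-s$ becomes $\log\frac{1+s}{1-s}=\frac{\beta(1-J)}{1+J}s$, whose nontrivial-root threshold is exactly $\Theta'(0)<-1$. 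Through ``at least five intersections'' this is correct and arguably tighter than the paper's treatment.

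The ``nine intersections'' claim, however, is not actually proved, and you say so yourself. Your framework --- the three branches $\alpha_1(v)<\alpha_2(v)<\alpha_3(v)$ of $\Phi(\cdot)=v$, the comparison functions $H_{ij}(v)=\Psi(\alpha_i(v))+\Psi(\alpha_j(v))$, and the local facts $H_{13}(0)=0$, $H_{13}'(0)>0$ (because $s_{**}>\gamma$, which is exactly $\Theta(\gamma)<-\gamma$), $H_{23}(0)=-2s_{**}<0$ --- is sound. But the two decisive steps are left open: (i) the sign of the common limit of $H_{13}$ and $H_{23}$ as $v\uparrow -\Phi(\eta)$, without which your IVT dichotomy cannot fire, and (ii) the uniqueness argument ruling out zeros of $H_{12}$ and extra zeros of $H_{13},H_{23}$, without which a successful IVT step gives only ``at least nine'' rather than the required \emph{exactly} nine. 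The paper's (admittedly informal and figure-driven) resolution exploits the identity $\Theta'(\gamma)=1$, under which the hypothesis $\Theta(\gamma)<-\gamma$ is precisely the transversality condition $\Theta'(s_+)>1$ at the anti-diagonal crossing $(s_+,-s_+)$; this is then read off from the plotted phase portraits as exactly two additional intersections in each of the second and fourth quadrants. As it stands your proposal does not close this part of the proposition.
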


\begin{proof}
The graphs in \eqref{eq:Theta(s)=00003Dt,Theta(t)=00003Ds} is plotted
similarly to Figure $3$ in \cite{Bipartite q=00003D2} depending
on the value of $\Theta'(0)$. Note that $\Theta(s)$ is point symmetric
at (0,0) and $\Theta'(s)$ has a unique minimum at $s=0$. 

(1) and (2) are straightforward.

(3) If $\Theta'(0)<-1$, then \eqref{eq:Theta(s)=00003Dt,Theta(t)=00003Ds}
begin to intersect the line $t=-s$ at two points, except the origin.
Note that $\Theta'(\gamma)=1.$ If we denote the positive solution
of $\Theta(s)=-s$ by $s_{+}$, then the condition \eqref{eq:5 to 9points}
means that $\gamma<s_{+}$. Thus, \eqref{eq:Theta(s)=00003Dt,Theta(t)=00003Ds}
has two more intersections in the fourth quadrant, and by symmetry,
there are two more intersections in the second quadrant. This completes
the proof.
\end{proof}
From the definitions of functions $\zeta_{1}$ and $\zeta_{2}$, and
the properties of function $\Theta$, we have the following lemma.
\begin{lem}
\label{lem:equiv condition}Consider the functions $\zeta_{1},\zeta_{2},$
and $\Theta$. Then, we have the following equivalence conditions.
\begin{enumerate}
\item $\beta<2$ if and only if $\Theta'(0)>1.$
\end{enumerate}
Suppose that $\beta>2$.
\begin{enumerate}
\item[(2)]  $J>\zeta_{1}(\beta)$ if and only if $-1<\Theta'(0)<1$. However,
$J<\zeta_{1}(\beta)$ if and only if $\Theta'(0)<-1$
\item[(3)]  $J<\zeta_{2}(\beta)$ if and only if the inequality \eqref{eq:5 to 9points}
holds.
\end{enumerate}
\end{lem}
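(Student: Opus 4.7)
The plan is to reduce each equivalence to an elementary algebraic manipulation after computing $\Theta'(s)$ explicitly. From the definition \eqref{eq:Theta(s)}, I would first record the derivative
\[
\Theta'(s)=\frac{1}{J}\left(-1+\frac{1+J}{\beta}\cdot\frac{2}{1-s^{2}}\right),\qquad\text{so}\qquad \Theta'(0)=\frac{2+2J-\beta}{J\beta}.
\]
Since $\Theta''(s) = \frac{1+J}{\beta J}\cdot\frac{4s}{(1-s^{2})^{2}}$ vanishes only at $s=0$, we confirm that $\Theta'$ attains its minimum at $0$, justifying the structural claim used in the proof of Proposition~\ref{lem:G(s) property}.

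For part (1), I would observe that $\Theta'(0)>1$ rearranges (since $J\beta>0$) to $2-\beta>J(\beta-2)$, i.e.\ $(\beta-2)(1+J)<0$. Because $1+J>0$, this is equivalent to $\beta<2$. For part (2), with $\beta>2$, the inequality $\Theta'(0)<1$ becomes $(\beta-2)(1+J)>0$, which always holds, so the only nontrivial constraint in $-1<\Theta'(0)<1$ is the lower bound. Rewriting $\Theta'(0)>-1$ as $2+2J-\beta>-J\beta$ and collecting $J$ gives $J(\beta+2)>\beta-2$, i.e.\ $J>\frac{\beta-2}{\beta+2}=\zeta_{1}(\beta)$, which is the desired equivalence (and its negation yields the companion statement for $\Theta'(0)<-1$).

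Part (3) is the main algebraic piece. I would substitute $\gamma=\sqrt{(\beta-2)/\beta}$ into $\Theta(\gamma)<-\gamma$, multiply through by $J>0$, and collect the $J$-dependent terms to reach
\[
J\bigl(\beta\gamma+L\bigr)<\beta\gamma-L,\qquad L:=\log\frac{1+\gamma}{1-\gamma}.
\]
The remaining task is to show $\beta\gamma+L>0$ (so the inequality can be divided without flipping; this follows since $\gamma\in(0,1)$ and $L>0$), after which the condition becomes $J<(\beta\gamma-L)/(\beta\gamma+L)$. Finally I would use $\beta\gamma=\sqrt{\beta(\beta-2)}$ together with
\[
\frac{1+\gamma}{1-\gamma}=\frac{(\sqrt{\beta}+\sqrt{\beta-2})^{2}}{2},\qquad\text{hence}\qquad L=2\log\frac{\sqrt{\beta}+\sqrt{\beta-2}}{\sqrt{2}},
\]
to identify the right-hand side as exactly $\zeta_{2}(\beta)$ in \eqref{eq:zeta_2}.

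The step I expect to be most delicate is the sign-tracking in part (3): one must keep track of whether the coefficient $\beta\gamma+L$ multiplying $J$ is positive (so the inequality direction is preserved), and verify that after all algebraic rearrangements the final inequality truly matches the displayed form of $\zeta_{2}$. Everything else reduces to one-line manipulations once $\Theta'(0)$ is computed.
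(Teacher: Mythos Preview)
Your proposal is correct and is exactly the kind of direct computation the paper has in mind; the paper's own proof simply reads ``The results can be obtained from straightforward computations,'' and your write-up fills in precisely those computations. Every step (the formula for $\Theta'(0)$, the algebra in parts (1)--(2), and the identification $\beta\gamma=\sqrt{\beta(\beta-2)}$, $L=2\log\bigl((\sqrt{\beta}+\sqrt{\beta-2})/\sqrt{2}\bigr)$ in part (3)) checks out.
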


\begin{proof}
The results can be obtained from straightforward computations.
\end{proof}
\begin{lem}
\label{lem:eigenvalues q=00003D2}Suppose that $(a,b)$ is a critical
point of $F_{\beta,\;J}(x,y)$ and let $\lambda_{1},\lambda_{2}$
be the eigenvalues of the Hessian matrix $\nabla^{2}F_{\beta,\;J}(a,b)$.
Then,
\end{lem}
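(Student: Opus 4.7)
The plan is to reduce the Hessian to a simple $2\times 2$ matrix whose entries are expressible through $\Theta'$, and then read off the eigenvalues from its characteristic polynomial. First, I would change coordinates via the substitution used to derive \eqref{eq:Theta(s)=00003Dt,Theta(t)=00003Ds}, namely $s=2a-1$ and $t=2b-1$. A short computation gives $a(1-a)=(1-s^{2})/4$ and $b(1-b)=(1-t^{2})/4$, while the off-diagonal entries of \eqref{eq:Hessian q=00003D2} remain $-2J$ unchanged.

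The key algebraic identity, which drives the whole proof, is
\[
\frac{1+J}{\beta\,a(1-a)}-2 \;=\; \frac{4(1+J)}{\beta(1-s^{2})}-2 \;=\; 2J\,\Theta'(s),
\]
which follows directly from differentiating \eqref{eq:Theta(s)}:
\[
\Theta'(s)=\frac{1}{J}\left(-1+\frac{2(1+J)}{\beta(1-s^{2})}\right).
\]
An analogous identity holds for the $(2,2)$ entry with $\Theta'(t)$. Plugging these in, the Hessian at the critical point $(a,b)$ takes the symmetric form
\[
\nabla^{2}F_{\beta,\;J}(a,b)=2J\begin{pmatrix}\Theta'(s) & -1\\ -1 & \Theta'(t)\end{pmatrix}.
\]

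Next I would extract the eigenvalues from the resulting $2\times 2$ characteristic polynomial
\[
\lambda^{2}-2J\bigl(\Theta'(s)+\Theta'(t)\bigr)\lambda+4J^{2}\bigl(\Theta'(s)\Theta'(t)-1\bigr)=0,
\]
giving the explicit expression
\[
\lambda_{1,2}=J\Bigl(\Theta'(s)+\Theta'(t)\pm\sqrt{(\Theta'(s)-\Theta'(t))^{2}+4}\Bigr).
\]
In particular $\lambda_{1}\lambda_{2}=4J^{2}(\Theta'(s)\Theta'(t)-1)$ and $\lambda_{1}+\lambda_{2}=2J(\Theta'(s)+\Theta'(t))$, so the signs that classify the critical point into minimum/saddle/maximum can be read off from $\Theta'(s)\Theta'(t)-1$ and $\Theta'(s)+\Theta'(t)$ alone. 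Since the discriminant $(\Theta'(s)-\Theta'(t))^{2}+4$ is strictly positive, both eigenvalues are automatically real and distinct, as must be the case for a real symmetric matrix.

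There is no real obstacle here; the only nontrivial step is recognizing the identity $\frac{1+J}{\beta a(1-a)}-2=2J\Theta'(s)$, which is what makes the Hessian collapse to a matrix governed entirely by $\Theta'$. If the final statement additionally relates these eigenvalues to the qualitative geometry (transversal vs.\ tangential) of the intersections classified in Proposition~\ref{lem:G(s) property}, then I would combine the factorization $\lambda_{1}\lambda_{2}=4J^{2}(\Theta'(s)\Theta'(t)-1)$ with the fact that $\Theta'(s)$ is the slope of the curve $t=\Theta(s)$ at the intersection and $1/\Theta'(t)$ is the slope of $s=\Theta(t)$, so that $\Theta'(s)\Theta'(t)=1$ is precisely the tangential (degenerate) case—matching exactly the boundary of \eqref{eq:5 to 9points} via Lemma~\ref{lem:equiv condition}.
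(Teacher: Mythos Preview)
Your proposal is correct and takes essentially the same approach as the paper: both identify the key identity $\frac{1+J}{\beta a(1-a)}-2=2J\,\Theta'(2a-1)$ and then read off the trace and determinant of the Hessian to get $\lambda_1+\lambda_2=2J(\Theta'(s)+\Theta'(t))$ and $\lambda_1\lambda_2=4J^2(\Theta'(s)\Theta'(t)-1)$. Your extra step of factoring the Hessian as $2J\begin{pmatrix}\Theta'(s)&-1\\-1&\Theta'(t)\end{pmatrix}$ and writing the eigenvalues explicitly is a nice touch but not needed for the lemma as stated.
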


\begin{enumerate}
\item $\lambda_{1}+\lambda_{2}>0$ if and only if $\Theta'(2a-1)+\Theta'(2b-1)>0.$
\item $\lambda_{1}\lambda_{2}>0$ if and only if $\Theta'(2a-1)\Theta'(2b-1)>1.$
\end{enumerate}
\begin{proof}
The derivative of the $\Theta(s)$ is
\begin{equation}
\Theta'(s)=\frac{1}{J}\left(\frac{1+J}{\beta}\frac{2}{1-s^{2}}-1\right).\label{eq:Theta'(x)}
\end{equation}
By basic concepts in linear algebra and \eqref{eq:Hessian q=00003D2},
we can derive the following.

\noindent (1) The first result is calculated as
\begin{align*}
\lambda_{1}+\lambda_{2} & =\textup{tr}\left(\nabla^{2}F_{\beta,\;J}(a,b)\right)=\frac{1+J}{\beta a(1-a)}+\frac{1+J}{\beta b(1-b)}-4\\
 & =2J(\Theta'(2a-1)+\Theta'(2b-1)),
\end{align*}
(2) The second result is calculated as
\begin{align*}
\lambda_{1}\lambda_{2} & =\det\left(\nabla^{2}F_{\beta,\;J}(a,b)\right)=\left(\frac{1+J}{\beta a(1-a)}-2\right)\left(\frac{1+J}{\beta b(1-b)}-2\right)-4J^{2}\\
 & =4J^{2}(\Theta'(2a-1)\Theta'(2b-1)-1).
\end{align*}
This completes the proof.
\end{proof}
Now, we can categorize all the critical points of $F_{\beta,\;J}$
as done in \cite{Bipartite q=00003D2}. We have further found the
function \eqref{eq:zeta_2}, which is the boundary of the phase transition,
and compared the value of the local minima.
\begin{thm}[Classifications of the critical points of $F_{\beta,\;J}$]
\label{thm:class of critical points q=00003D2}$ $

\noindent The critical points of $F_{\beta,\;J}$ can be classified
as follows.
\begin{enumerate}
\item If $\beta<2$, then $F_{\beta,\;J}$ has unique local minimum at $\left(\frac{1}{2},\frac{1}{2}\right).$
\end{enumerate}
Now, suppose that $\beta>2$.
\begin{enumerate}
\item[(2)]  If $J>\zeta_{1}(\beta)$, then $F_{\beta,\;J}$ has two global minima
and a saddle point at $\left(\frac{1}{2},\frac{1}{2}\right).$
\item[(3)]  If $\zeta_{2}(\beta)<J<\zeta_{1}(\beta)$, then $F_{\beta,\;J}$
has two global minima, two saddle points, and a local maximum at $\left(\frac{1}{2},\frac{1}{2}\right)$.
\item[(4)]  If $J<\zeta_{2}(\beta)$, then $F_{\beta,\;J}$ has two global minima,
two local minima, four saddle points, and a local maximum at $\left(\frac{1}{2},\frac{1}{2}\right)$.
\end{enumerate}
\end{thm}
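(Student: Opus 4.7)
The plan is to turn the critical-point problem into the intersection problem \eqref{eq:Theta(s)=00003Dt,Theta(t)=00003Ds} via $(x,y) = ((1+s)/2, (1+t)/2)$, then apply Proposition \ref{lem:G(s) property} to count critical points, Lemma \ref{lem:equiv condition} to convert the hypotheses on $(\beta, J)$ into the correct regime of $\Theta'(0)$, and Lemma \ref{lem:eigenvalues q=00003D2} to classify each critical point through the sign of the trace and determinant of the Hessian. The entire argument is case-by-case bookkeeping of these three results, leveraging the $(s,t)\mapsto(-s,-t)$ and $(s,t)\mapsto(t,s)$ symmetries of \eqref{eq:Theta(s)=00003Dt,Theta(t)=00003Ds}.

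Cases (1) and (2) are quick. For $\beta<2$, Lemma \ref{lem:equiv condition}(1) gives $\Theta'(0)>1$, Proposition \ref{lem:G(s) property}(1) leaves only the origin, and Lemma \ref{lem:eigenvalues q=00003D2} then shows both eigenvalues are positive, yielding a unique global minimum. For $\beta>2$ and $J>\zeta_1(\beta)$, we have $-1<\Theta'(0)<1$, and the two non-origin intersections must by symmetry lie on the diagonal, say $\pm(s_+, s_+)$ with $\Theta(s_+) = s_+$. The origin is a saddle because $\Theta'(0)^2<1$; the diagonal points are local minima because $\Theta(s)<s$ on $(0,s_+)$ forces $\Theta'(s_+)>1$, and they are in fact global minima thanks to the identity
\[
F_{\beta,J}\!\left(\tfrac{1+s}{2}, \tfrac{1-s}{2}\right) - F_{\beta,J}\!\left(\tfrac{1+s}{2}, \tfrac{1+s}{2}\right) = Js^2 \ge 0,
\]
which shows that any anti-diagonal critical point is never lower than the diagonal one, and the same comparison carries over to Cases (3)--(4).

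For $J<\zeta_1(\beta)$, Lemma \ref{lem:equiv condition}(2) gives $\Theta'(0)<-1$, so Lemma \ref{lem:eigenvalues q=00003D2} returns a positive determinant and a negative trace at the origin, i.e., a local maximum. Proposition \ref{lem:G(s) property}(3) produces an anti-diagonal pair $\pm(s_-, -s_-)$ with $\Theta(s_-)=-s_-$ in addition to the diagonal pair. The position of $s_-$ relative to $\gamma=\sqrt{(\beta-2)/\beta}$ (where $\Theta'(\gamma)=1$) determines the type: in Case (3) the inequality \eqref{eq:5 to 9points} fails, so $s_-<\gamma$, whence $\Theta'(s_-)\in(-1,1)$ and the anti-diagonal points are saddles; in Case (4), \eqref{eq:5 to 9points} holds, so $\gamma<s_-$ and $\Theta'(s_-)>1$, upgrading them to local minima. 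The four extra off-diagonal intersections that appear only in Case (4) form a single orbit under the two symmetries; they represent a period-$2$ cycle of $\Theta$ that bifurcates from $(s_-,-s_-)$ precisely as $J$ crosses $\zeta_2(\beta)$, and a continuity argument from this bifurcation point (where $\Theta'(s_-)=1$) yields $\Theta'(s_1)\Theta'(t_1)<1$ on the emergent orbit, so Lemma \ref{lem:eigenvalues q=00003D2} classifies them as saddles.

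The main obstacle is this last classification: showing the four off-diagonal critical points in Case (4) are saddles rather than hidden extrema. The clean route is the continuity argument from the $J=\zeta_2(\beta)$ bifurcation sketched above, but it requires verifying that the emergent orbit remains in the region where $\Theta'(s_1)\Theta'(t_1)<1$ for all $J<\zeta_2(\beta)$; this hinges on the strict convexity and monotonicity properties of $\Theta'(s)=\frac{1}{J}\bigl(\frac{2(1+J)}{\beta(1-s^2)}-1\bigr)$ and on tracking the orbit as the parameter is decreased. Everything else is a routine execution of Proposition \ref{lem:G(s) property} and Lemma \ref{lem:eigenvalues q=00003D2}.
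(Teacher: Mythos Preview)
Your overall framework matches the paper's: reduce to the $\Theta$-system, count intersections via Proposition~\ref{lem:G(s) property}, translate the $(\beta,J)$ hypotheses via Lemma~\ref{lem:equiv condition}, and read off the Morse type via Lemma~\ref{lem:eigenvalues q=00003D2}. The classifications of the origin, the diagonal pair, and the anti-diagonal pair are all correct.

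There is a genuine gap in your global-minimum comparison. The identity
\[
F_{\beta,J}\!\left(\tfrac{1+s}{2},\tfrac{1-s}{2}\right)-F_{\beta,J}\!\left(\tfrac{1+s}{2},\tfrac{1+s}{2}\right)=Js^{2}
\]
is correct but compares the diagonal and anti-diagonal points at the \emph{same} value of $s$. In Case~(4) the diagonal minimum lies at $s_1$ and the anti-diagonal minimum at $s_2$, and $s_1\neq s_2$ (indeed $s_1>s_2$, since $\Theta(s_2)-s_2=-2s_2<0$). Your identity only gives $F_{\beta,J}(x_2,1-x_2)\ge F_{\beta,J}(x_2,x_2)$; to reach $F_{\beta,J}(x_1,x_1)$ you still need that the diagonal restriction of $F_{\beta,J}$ is minimized at $\pm s_1$, which is an additional (easy, but omitted) step. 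The paper avoids this two-stage argument by computing the difference directly: using the defining relations \eqref{eq:s_1 s_2 def} to eliminate $J$, one finds
\[
F_{\beta,J}(x_2,1-x_2)-F_{\beta,J}(x_1,x_1)=\frac{1+J}{\beta}\bigl(f(s_1)-f(s_2)\bigr),
\qquad f(x)=\tfrac{x}{2}\log\tfrac{1+x}{1-x}-2\Bigl\{\tfrac{1+x}{2}\log\tfrac{1+x}{2}+\tfrac{1-x}{2}\log\tfrac{1-x}{2}\Bigr\},
\]
and since $f$ is increasing on $(0,1)$ and $s_1>s_2$, positivity follows at once.

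For the four off-diagonal points in Case~(4), your bifurcation-continuity sketch is not a proof, as you acknowledge, and the paper does not spell this step out either. A clean rigorous route: the four points form a single orbit under the symmetries $(s,t)\mapsto(-s,-t)$ and $(s,t)\mapsto(t,s)$, hence share the same Morse index; since the negative gradient flow is inward along $\partial(0,1)^2$ (the $\log$-terms force this), the Poincar\'e--Hopf relation $\#\min-\#\text{saddle}+\#\max=1$ holds, and with one maximum and four minima already accounted for, the remaining four must all be saddles.
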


\begin{proof}
(1), (2) and (3) are straightforward by Proposition \ref{lem:G(s) property},
Lemma \ref{lem:equiv condition} and \ref{lem:eigenvalues q=00003D2}.

\noindent (4) By the aforementioned proposition and lemmas, there
are four local minima, four saddle points and a local maximum at $\left(\frac{1}{2},\frac{1}{2}\right)$.
Thus, it remains only to compare the values of the four local minima.
We denote the intersections of $\Theta(s)$ and $t=s$ by $(s_{1},s_{1})$
and $(-s_{1},-s_{1})$. Moreover, we denote the intersections of $\Theta(s)$
and $t=-s$ by $(s_{2},-s_{2})$ and $(-s_{2},s_{2})$. It is obvious
that $s_{1}>s_{2}$. If we set $x_{1}=\frac{1+s_{1}}{2}$ and $x_{2}=\frac{1+s_{2}}{2}$,
then by Lemma \ref{lem:eigenvalues q=00003D2}, $F_{\beta,\;J}$ has
local minima at $(x_{1},x_{1})$, $(1-x_{1},1-x_{1})$, $(x_{2},1-x_{2})$,
and $(1-x_{2},x_{2})$. By symmetry, we have
\[
F_{\beta,\;J}(x_{1},x_{1})=F_{\beta,\;J}(1-x_{1},1-x_{1})\;\;\text{and}\;\;F_{\beta,\;J}(x_{2},1-x_{2})=F_{\beta,\;J}(1-x_{2},x_{2}).
\]
In this case, we claim that
\begin{align}
F_{\beta,\;J}(x_{1},x_{1}) & <F_{\beta,\;J}(x_{2},1-x_{2}).\label{eq:size local min}
\end{align}
From the definitions of $s_{1}$ and $s_{2}$, we obtain
\begin{align}
 & Js_{1}=\frac{1+J}{\beta}\log\frac{1+s_{1}}{1-s_{1}}-s_{1}\;\;\text{and}\;\;-Js_{2}=\frac{1+J}{\beta}\log\frac{1+s_{2}}{1-s_{2}}-s_{2}.\label{eq:s_1 s_2 def}
\end{align}
An elementary calculation using the equations in \eqref{eq:s_1 s_2 def}
shows that
\[
F_{\beta,\;J}(x_{2},1-x_{2})-F_{\beta,\;J}(x_{1},x_{1})=\frac{1+J}{\beta}(f(s_{1})-f(s_{2})),
\]
where $f(x)=\frac{x}{2}\log\frac{1+x}{1-x}-2\left\{ \left(\frac{1+x}{2}\right)\log\left(\frac{1+x}{2}\right)+\left(\frac{1-x}{2}\right)\log\left(\frac{1-x}{2}\right)\right\} .$
Since the function $f(x)$ is increasing on $x>0,$ and since $s_{1}>s_{2},$
we conclude that \eqref{eq:size local min} holds. Therefore, $F_{\beta,\;J}$
has global minima at $(x_{1},x_{1})$ and $(1-x_{1},1-x_{1})$, and
has local minima at $(x_{2},1-x_{2})$ and $(1-x_{2},x_{2})$. This
completes the proof.
\end{proof}

\section{Potts model on bipartite graph with three spins}

In this section, we prove Theorem \ref{thm:sync-desync phase}, which
is the main result. In section \ref{subsec:Critical pts}, we deduce
the necessary conditions for the critical points of the function $F_{\beta,\;J}$
and calculate the eigenvalues of its corresponding Hessian matrix.
In section \ref{subsec:Extreme cases}, we prove Theorem \ref{thm:-extreme case}
by considering the two extreme cases, respectively, in Theorem \ref{thm:J=00003D0}
and Theorem \ref{thm:J=00003Dinfty}. In section \ref{subsec:Low temp},
\ref{subsec:Middle temp}, and \ref{subsec:High temp}, we find the
synchronization and desynchronization boundaries in low, medium, and
high temperatures, respectively.

\subsection{\label{subsec:Critical pts}Critical points of $F_{\beta,\;J}$}

In this section, we will find the necessary conditions for the critical
points of $F_{\beta,\;J}$ and define two functions, $\Phi$ and $\Psi$,
which are derived from them. Then, we will present the relationships
between the functions $\Phi,\Psi$ and the eigenvalues of the Hessian
matrix of $F_{\beta,\;J}$ in Lemma \ref{lem:eigenvalues}. From Proposition
\ref{Prop Ham}, the function $F_{\beta,\;J}$ for $q=3$ is given
by
\begin{equation}
F_{\beta,\;J}(\bm{x},\bm{y})=-\frac{1}{2}\sum_{i=1}^{3}(x_{i}^{2}+y_{i}^{2})-J\sum_{i=1}^{3}x_{i}y_{i}+\frac{1+J}{\beta}\sum_{i=1}^{3}(x_{i}\log x_{i}+y_{i}\log y_{i})\label{eq:F_beta,J q=00003D3}
\end{equation}
Here, we used the notations $x_{i}$ for $x_{i}^{(1)}$ and $y_{i}$
for $x_{i}^{(2)}$ to simplify the expression. To find critical points,
the first order derivatives of $F_{\beta,\;J}$ must be zero:
\begin{align*}
\frac{\partial F_{\beta,\;J}}{\partial x_{k}}(\bm{x},\bm{y})=-(x_{k}-x_{3})-J(y_{k}-y_{3})+\frac{1+J}{\beta}(\log x_{k}-\log x_{3})=0,\\
\frac{\partial F_{\beta,\;J}}{\partial y_{k}}(\bm{x},\bm{y})=-(y_{k}-y_{3})-J(x_{k}-x_{3})+\frac{1+J}{\beta}(\log y_{k}-\log y_{3})=0.
\end{align*}
for $1\leq k,l\leq3.$ Thus, we can obtain the equations
\begin{align*}
-x_{k}-Jy_{k}+\frac{1+J}{\beta}\log x_{k} & =-x_{l}-Jy_{l}+\frac{1+J}{\beta}\log x_{l},\\
-y_{k}-Jx_{k}+\frac{1+J}{\beta}\log y_{k} & =-y_{l}-Jx_{l}+\frac{1+J}{\beta}\log y_{l},
\end{align*}
for $1\leq k,l\leq3$. Since each side of the above equations are
symmetric and negative, consider the following equations:
\begin{align}
y=\Phi(x)+u\;\;\;\mbox{and}\;\;\;x=\Phi(y)+v,\label{eq:Phi(x)+u,Phi(y)+v}
\end{align}
where $u,v$ are real positive numbers and the function $\Phi$ is
defined by

\begin{equation}
\Phi(x):=\frac{1}{J}\left(-x+\frac{1+J}{\beta}\log x\right).\label{eq:Phi(x)}
\end{equation}

We need to analyze the solutions of \eqref{eq:Phi(x)+u,Phi(y)+v}
according to the values of $u$ and $v$, because they are the candidates
for the coordinates of critical points. Assume first that $u=v$.
When $u=v=0$, the graphs in \eqref{eq:Phi(x)+u,Phi(y)+v} do not
intersect since the graph $y=\Phi(x)$ is under the $x$-axis and
the graph $x=\Phi(y)$ is to the left of the $y$-axis. If we increase
$u$ gradually, $y=\Phi(x)+u$ will be tangent to $y=x$ at $x=\frac{1}{\beta}$
before intersecting with $x=\Phi(y)+u$ at two points on the line
$y=x$. We denote these points by $\bm{P}=(P,P)$ and $\bm{Q}=(Q,Q)$.
It is important to check whether these intersections are in the area
$[0,1]\times[0,1]$, otherwise these points are meaningless, because
they represent the ratio of each spin. In addition, the sum of three
of them has to be equal to one, that is, $P+2Q=1$ or $2P+Q=1$. There
is no need to consider the case when $u\neq v$ and there are two
intersections because the sum of the $x$-coordinates and the sum
of the $y$-coordinates cannot be equal to one at the same time.

Note that the smaller the $J>0$, the sharper is the graph of $y=\Phi(x)+u$.
Since the function $y=\Phi(x)+u$ is concave, \eqref{eq:Phi(x)+u,Phi(y)+v}
can have at most four intersections. We denote these intersections
by $\bm{P}=(P,P),\;\bm{R}=(R,S),\;\bm{S}=(S,R)$, and $\bm{Q}=(Q,Q)$
with $P\leq R\leq Q\leq S$. See Figure \ref{fig:Phi(x)}. The critical
points must satisfy that both the sum of their $x$-coordinates and
the sum of their $y$-coordinates should be equal to one, respectively.
For example, if we choose $\bm{P},\;\bm{R}$ and $\bm{S}$, then $P+R+S=1$.
In general, if $u\neq v$, then we denote these intersections by $\bm{P}=(P_{1},P_{2}),\;\bm{R}=(R_{1},R_{2}),\;\bm{S}=(S_{1},S_{2}),$
and $\bm{Q}=(Q_{1},Q_{2})$, with $P_{1}\leq R_{1}\leq Q_{1}\leq S_{1}.$
In this case, if we choose $\bm{P},\bm{R}$ and $\bm{S}$, then the
coordinates should satisfy $P_{1}+R_{1}+S_{1}=P_{2}+R_{2}+S_{2}=1$.

\begin{figure}
\includegraphics[width=7cm,height=7cm]{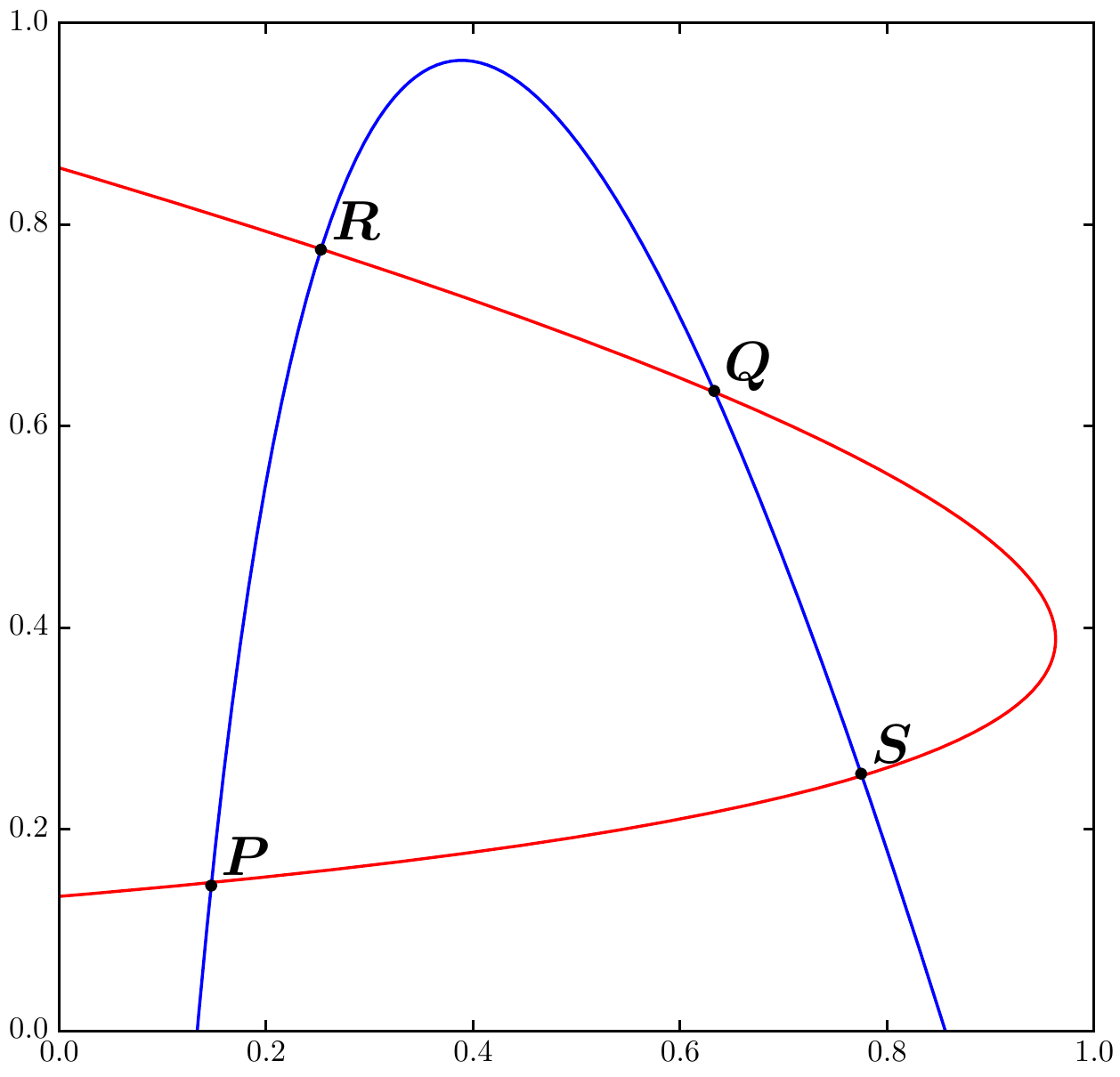}\caption{\label{fig:Phi(x)}The graphs of $y=\Phi(x)+u$ (blue curve) and $x=\Phi(y)+u$
(red curve)}
\end{figure}

\begin{rem}
The points $P_{k},Q_{k},R_{k}$ and $S_{k}$ for $k=1,2$ are functions
of the variables $u,v,J$ and $\beta$.

For now, we assume that the critical points of $F_{\beta,\;J}$ are
of the form $(s,s,1-2s,t,t,1-2t)$ up to permutations. Such critical
points satisfy the following equations:
\begin{align*}
 & y=\Phi(x)+u\;\;\;\mbox{and}\;\;\;1-2y=\Phi(1-2x)+u,\\
 & x=\Phi(y)+u\;\;\;\text{and}\;\;\;1-2x=\Phi(1-2y)+u.
\end{align*}
Subtracting the second equation from the first equation yields
\begin{align}
y & =\Psi(x)\;\;\;\text{and}\;\;\;x=\Psi(y),\label{eq:y=00003DPsi(x), x=00003DPsi(y)}
\end{align}
where
\begin{equation}
\Psi(x):=\frac{1}{3}\left(\Phi(x)-\Phi(1-2x)+1\right)=\frac{1}{3J}\left(1-3x+\frac{1+J}{\beta}\log\frac{x}{1-2x}+J\right).\label{eq:Psi(x)}
\end{equation}
Note that the function $\Psi(x)$ always passes through $(\frac{1}{3},\frac{1}{3})$.
From the equation $\Psi(x)=x$, we obtain the inverse temperature
\begin{equation}
\beta=\xi(x).\label{eq:invtem}
\end{equation}
Recall that $\xi(x)$ was defined in \eqref{eq:xi_invtem}. The derivative
of $\xi(x)$ is
\begin{equation}
\frac{d\xi(x)}{dx}=\frac{1-3x+3x(1-2x)\log\frac{x}{1-2x}}{x(2x-1)(3x-1)^{2}},\label{eq:derivative invtem}
\end{equation}
\eqref{eq:derivative invtem} has a unique solution and we denote
this by $m_{1}\approx0.2076$. Then, $\xi(x)$ has a global minimum
at $m_{1}$ and by the definition of $\beta_{1}$, we obtain 
\begin{equation}
\beta_{1}=\frac{1}{1-3m_{1}}\log\frac{1-2m_{1}}{m_{1}}.\label{eq:beta1}
\end{equation}
Recall that given $\beta>\beta_{1}$, we denoted the smaller solution
of \eqref{eq:invtem} by $x_{s}=x_{s}(\beta)$ and the larger one
by $x_{l}=x_{l}(\beta)$. Then, we can derive the relationships between
the functions $\Phi,\Psi$ and the eigenvalues of the Hessian matrix.
\end{rem}

\begin{lem}
\label{lem:eigenvalues}Suppose that the critical points of $F_{\beta,\;J}$
are of the form $(s,s,1-2s,t,t,1-2t)$ up to permutations, where $0\leq s,t\leq1$.
Let $\lambda_{1},\lambda_{2},\lambda_{3},$ and $\lambda_{4}$ be
the eigenvalues of the Hessian matrix of $F_{\beta,\;J}$. After reordering
the eigenvalues, we have
\begin{enumerate}
\item $\lambda_{1}+\lambda_{2}>0$ if and only if $\Phi'(s)+\Phi'(t)>0$
and $\lambda_{1}\lambda_{2}>0$ if and only if $\Phi'(s)\Phi'(t)>1$,
\item $\lambda_{3}+\lambda_{4}>0$ if and only if $\Psi'(s)+\Psi'(t)>0$
and $\lambda_{3}\lambda_{4}>0$ if and only if $\Psi'(s)\Psi'(t)>1$.
\end{enumerate}
\end{lem}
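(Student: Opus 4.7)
The plan is to compute the $4\times 4$ Hessian of $F_{\beta,J}$ at a critical point of the assumed symmetric form and then block-diagonalize it using the $\mathbb{Z}_2$ symmetry $(x_1\leftrightarrow x_2,\,y_1\leftrightarrow y_2)$. Because $x_3=1-x_1-x_2$ and $y_3=1-y_1-y_2$ are dependent, I would first rewrite $F_{\beta,J}$ as a function of $(x_1,x_2,y_1,y_2)\in\mathbb R^4$ and differentiate carefully. A direct computation gives, at the point $(s,s,1-2s,t,t,1-2t)$, a Hessian of the block form
\[
H=\begin{pmatrix} A & B & -2J & -J\\ B & A & -J & -2J\\ -2J & -J & C & D\\ -J & -2J & D & C\end{pmatrix},
\]
where
\[
A=-2+\tfrac{1+J}{\beta}\bigl(\tfrac{1}{s}+\tfrac{1}{1-2s}\bigr),\quad B=-1+\tfrac{1+J}{\beta(1-2s)},
\]
and analogously $C,D$ with $s$ replaced by $t$; the mixed entries $-2J,-J$ come from the derivatives of $-J\sum_i x_iy_i$ after substituting $x_3,y_3$.

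The key observation is that $H$ commutes with the involution that swaps coordinates $1\leftrightarrow 2$ inside each component. In the orthonormal basis
\[
u_1=\tfrac{1}{\sqrt2}(1,1,0,0),\ u_2=\tfrac{1}{\sqrt2}(1,-1,0,0),\ u_3=\tfrac{1}{\sqrt2}(0,0,1,1),\ u_4=\tfrac{1}{\sqrt2}(0,0,1,-1),
\]
the symmetric subspace $\mathrm{span}(u_1,u_3)$ and the antisymmetric subspace $\mathrm{span}(u_2,u_4)$ are $H$-invariant, and $H$ decomposes into the two $2\times 2$ blocks
\[
H_{\mathrm{sym}}=\begin{pmatrix}A+B & -3J\\ -3J & C+D\end{pmatrix},\qquad H_{\mathrm{asym}}=\begin{pmatrix}A-B & -J\\ -J & C-D\end{pmatrix}.
\]
I would verify this block structure by applying $H$ to each $u_i$ and reading off the entries; this is the only nontrivial bookkeeping step and the main place to be careful.

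Next I would match the block entries with $\Phi'$ and $\Psi'$. From $\Phi(x)=\frac{1}{J}\bigl(-x+\frac{1+J}{\beta}\log x\bigr)$ I get $\Phi'(x)=\frac{1}{J}\bigl(-1+\frac{1+J}{\beta x}\bigr)$, and a direct subtraction gives $A-B=J\Phi'(s)$, $C-D=J\Phi'(t)$. From $\Psi(x)=\frac{1}{3J}\bigl(1-3x+\frac{1+J}{\beta}\log\frac{x}{1-2x}+J\bigr)$ and $\frac{d}{dx}\log\frac{x}{1-2x}=\frac{1}{x}+\frac{2}{1-2x}$ I obtain $\Psi'(x)=\frac{1}{3J}\bigl(-3+\frac{1+J}{\beta}(\frac{1}{x}+\frac{2}{1-2x})\bigr)$, and summing gives $A+B=3J\Psi'(s)$, $C+D=3J\Psi'(t)$. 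Therefore
\[
H_{\mathrm{asym}}=J\begin{pmatrix}\Phi'(s) & -1\\ -1 & \Phi'(t)\end{pmatrix},\qquad H_{\mathrm{sym}}=3J\begin{pmatrix}\Psi'(s) & -1\\ -1 & \Psi'(t)\end{pmatrix}.
\]

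Finally, I would label the eigenvalues of $H_{\mathrm{asym}}$ as $\lambda_1,\lambda_2$ and of $H_{\mathrm{sym}}$ as $\lambda_3,\lambda_4$. Then the conclusions follow from the standard trace/determinant identities for $2\times 2$ matrices: $\lambda_1+\lambda_2=J(\Phi'(s)+\Phi'(t))$ and $\lambda_1\lambda_2=J^2(\Phi'(s)\Phi'(t)-1)$, which give assertion (1); $\lambda_3+\lambda_4=3J(\Psi'(s)+\Psi'(t))$ and $\lambda_3\lambda_4=9J^2(\Psi'(s)\Psi'(t)-1)$, which give assertion (2), using $J>0$ to drop the positive constants. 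The main obstacle is really just executing the $4\times 4$ Hessian computation cleanly and verifying the block decomposition; once those are in hand, the identification with $\Phi',\Psi'$ is essentially forced by the defining formulas of $\Phi$ and $\Psi$.
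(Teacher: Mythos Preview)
Your proof is correct and reaches the same factorization of the Hessian spectrum as the paper, but the route is slightly different and arguably cleaner. The paper writes $\nabla^2 F_{\beta,J}-\lambda I=\begin{pmatrix}\mathbf{A_x}&\mathbf{B}\\\mathbf{B}&\mathbf{A_y}\end{pmatrix}$, observes that at the symmetric point $\mathbf{B}$ commutes with $\mathbf{A_y}$, and then computes the characteristic polynomial via the block-determinant identity $\det(\mathbf{A_x}\mathbf{A_y}-\mathbf{B}^2)$, which is expanded and factored by hand into two quadratics whose coefficients are then matched with $\Phi',\Psi'$. You instead exploit the $\mathbb{Z}_2$ symmetry $(x_1\leftrightarrow x_2,\,y_1\leftrightarrow y_2)$ directly, block-diagonalizing $H$ in the symmetric/antisymmetric basis to obtain the two $2\times 2$ blocks $J\begin{pmatrix}\Phi'(s)&-1\\-1&\Phi'(t)\end{pmatrix}$ and $3J\begin{pmatrix}\Psi'(s)&-1\\-1&\Psi'(t)\end{pmatrix}$. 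The two methods are equivalent in substance---the commutativity $\mathbf{B}\mathbf{A_y}=\mathbf{A_y}\mathbf{B}$ that the paper uses is precisely the algebraic shadow of the $\mathbb{Z}_2$ invariance you invoke---but your version makes the origin of the factorization transparent and avoids the explicit quartic expansion; the paper's version is more mechanical but requires no basis choice. Your identifications $A-B=J\Phi'(s)$ and $A+B=3J\Psi'(s)$ are exactly what the paper obtains after its longer computation, so the final trace/determinant identities coincide verbatim.
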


\begin{proof}
Note that 
\[
\Phi'(x)=\frac{1}{J}\left(\frac{1+J}{\beta x}-1\right)\;\;\;\text{and}\;\;\;\Psi'(x)=\frac{1}{J}\left(\frac{1+J}{\beta}\frac{1}{3x(1-2x)}-1\right).
\]
Before giving the proof, we first find the characteristic polynomial
of the Hessian matrix of $F_{\beta,\;J}$. Then, we compute the determinant
of the following matrix:
\begin{equation}
\nabla^{2}(F_{\beta,\;J})-\lambda I=\left(\begin{array}{cc}
\mathbf{A_{x}} & \mathbf{B}\\
\mathbf{B} & \mathbf{A_{y}}
\end{array}\right),\label{eq:eigenval}
\end{equation}
where 
\begin{align*}
\mathbf{A_{x}} & =\left(\begin{array}{cc}
\frac{1+J}{\beta}\left(\frac{1}{x_{1}}+\frac{1}{x_{3}}\right)-2-\lambda & \frac{1+J}{\beta}\frac{1}{x_{3}}-1\\
\frac{1+J}{\beta}\frac{1}{x_{3}}-1 & \frac{1+J}{\beta}\left(\frac{1}{x_{2}}+\frac{1}{x_{3}}\right)-2-\lambda
\end{array}\right),
\end{align*}
\begin{align*}
\mathbf{A_{y}} & =\left(\begin{array}{cc}
\frac{1+J}{\beta}\left(\frac{1}{y_{1}}+\frac{1}{y_{3}}\right)-2-\lambda & \frac{1+J}{\beta}\frac{1}{y_{3}}-1\\
\frac{1+J}{\beta}\frac{1}{y_{3}}-1 & \frac{1+J}{\beta}\left(\frac{1}{y_{2}}+\frac{1}{y_{3}}\right)-2-\lambda
\end{array}\right),
\end{align*}
 and $\mathbf{B}=\left(\begin{array}{cc}
-2J & -J\\
-J & -2J
\end{array}\right).$ By the assumption that $x_{1}=x_{2}=s$, $y_{1}=y_{2}=t,$ $x_{3}=1-2s$
and $y_{3}=1-2t$, $\mathbf{A_{x}}$ and $\mathbf{A_{y}}$ become
symmetric matrices. In this case, $\mathbf{BA_{y}}=\mathbf{A_{y}B}$
and hence the determinant of the Hessian matrix becomes $\det(\mathbf{A_{x}A_{y}}-\mathbf{B}^{2})$.
An elementary computation shows that
\begin{align*}
 & \det\left(\nabla^{2}\left(F_{\beta,\;J}(s,s,1-2s,t,t,1-2t)\right)-\lambda I\right)\\
 & =\{\lambda^{2}+(2-S_{1}-T_{1})\lambda+S_{1}T_{1}-S_{1}-T_{1}+1-J^{2}\}\\
 & \;\;\;\;\times\{\lambda^{2}+(6-S_{1}-T_{1}-2S_{2}-2T_{2})\lambda+S_{1}T_{1}+2S_{1}T_{2}\\
 & \;\;\;\;+2S_{2}T_{1}+4S_{2}T_{2}-3S_{1}-3T_{1}-6S_{2}-6T_{2}+9-9J^{2}\},
\end{align*}
where $S_{1}=\frac{1+J}{\beta s},\;S_{2}=\frac{1+J}{\beta(1-2s)}$
and $T_{1}=\frac{1+J}{\beta t},\;T_{2}=\frac{1+J}{\beta(1-2t)}.$
We denote the solutions of the first factor by $\lambda_{1},\lambda_{2}$
and the solutions of the second factor by $\lambda_{3},\lambda_{4}.$
Then, we may derive the following.

\noindent (1) The first result is calculated as
\begin{align*}
\lambda_{1}+\lambda_{2} & =S_{1}+T_{1}-2=J\left(\Phi'(s)+\Phi'(t)\right),\\
\lambda_{1}\lambda_{2} & =S_{1}T_{1}-S_{1}-T_{1}+1-J^{2}=J^{2}\left(\Phi'(s)\Phi'(t)-1\right),
\end{align*}
(2) The second result is calculated as
\begin{align*}
\lambda_{3}+\lambda_{4} & =S_{1}+T_{1}+2S_{2}+2T_{2}-6=3J\left(\Psi'(s)+\Psi'(t)\right),\\
\lambda_{3}\lambda_{4} & =S_{1}T_{1}+2S_{1}T_{2}+2S_{2}T_{1}+4S_{2}T_{2}-3S_{1}-3T_{1}-6S_{2}-6T_{2}+9-9J^{2},\\
 & =9J^{2}\left(\Psi'(s)\Psi'(t)-1\right).
\end{align*}
This completes the proof.
\end{proof}

\subsection{\label{subsec:Extreme cases}Two extreme cases: $J=\infty$ and $J=0$}

In this section, we prove Theorem \ref{thm:-extreme case} by categorizing
all the critical points of $F_{\beta,\;J}$ according to the value
of the inverse temperature $\beta$ in both cases $J=\infty$ (i.e.,
$J_{11}=J_{22}=0$ and $J_{12}=1$), a case without component-wise
interaction and $J=0$ (i.e., $J_{12}=0$ and $J_{11}=J_{22}=1$),
a case without inter-component interaction. For convenience, we denote
the point $\left(\frac{1}{3},\frac{1}{3},\frac{1}{3},\frac{1}{3},\frac{1}{3},\frac{1}{3}\right)$
by $\bm{q_{3}}$. First, we consider the case when $J=\infty$.
\begin{thm}
\label{thm:J=00003Dinfty}Suppose that $J=\infty$. (i.e., $J_{11}=J_{22}=0$
and $J_{12}=1$), that is, there is no component-wise interaction
in each block. Then, we have the following.
\begin{enumerate}
\item If $0<\beta<\beta_{1}$, then there is only one local minimum at $\bm{q}_{3}$.
\item If $\beta_{1}<\beta<\beta_{2}$, then there is a global minimum at
$\bm{q_{3}}$, three local minima, and three saddle points.
\item If $\beta_{2}<\beta<\beta_{3}$, then there is a local minimum at
$\bm{q_{3}}$, three global minima, and three saddle points.
\item If $\beta>\beta_{3}$, then there are three global minima and three
saddle points.
\end{enumerate}
\end{thm}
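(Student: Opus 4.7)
The plan is to carry the analysis of Section \ref{subsec:Critical pts} through in the degenerate limit $J\to\infty$, where the normalization of the free energy yields
\[
G(\bm{x},\bm{y}) \;:=\; \lim_{J\to\infty}\frac{F_{\beta,J}(\bm{x},\bm{y})}{1+J} \;=\; -\sum_{i=1}^{3} x_{i}y_{i} + \frac{1}{\beta}\sum_{i=1}^{3}\bigl(x_{i}\log x_{i} + y_{i}\log y_{i}\bigr).
\]
Since $F_{\beta,J}/(1+J)$ converges to $G$ uniformly on compact subsets of $\Xi^2$ with strictly positive coordinates and the positive factor $1+J$ preserves signs of the Hessian eigenvalues, the critical points of $F_{\beta,\infty}$ and their Morse indices are exactly those of $G$. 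Lagrange multipliers on $\sum_i x_i = \sum_i y_i = 1$ give the critical point conditions $\beta(y_k-y_l) = \log(x_k/x_l)$ and $\beta(x_k-x_l) = \log(y_k/y_l)$ for every pair $k,l\in\{1,2,3\}$.

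The first task is to show that all critical points have the symmetric form $(s,s,1-2s,s,s,1-2s)$ up to spin relabelling. The first condition writes $y_k = \psi(x_k)$ where $\psi(x)=\frac{1}{\beta}\log x + c$ is strictly increasing and strictly concave on $(0,1)$; the second condition then forces every $x_k$ to be a fixed point of $\psi\circ\psi$. For a strictly increasing map, fixed points of the iterate coincide with fixed points of the map, and a strictly concave map has at most two fixed points, so $\{x_1,x_2,x_3\}$ contains at most two distinct values and likewise for $\{y_1,y_2,y_3\}$. Writing $\bm{x}=(s,s,1-2s)$ and $\bm{y}=(t,t,1-2t)$ reduces the critical point equations to $t=\Psi_\infty(s)$ and $s=\Psi_\infty(t)$, where $\Psi_\infty(s):=\lim_{J\to\infty}\Psi(s)=\tfrac{1}{3}\bigl(1+\tfrac{1}{\beta}\log\tfrac{s}{1-2s}\bigr)$ with $\Psi$ from \eqref{eq:Psi(x)}. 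Since $\Psi_\infty$ is strictly increasing on $(0,1/2)$, the same monotone argument forces $s=t$, so the problem reduces to $\Psi_\infty(s)=s$, equivalently $\beta=\xi(s)$. This yields $s=1/3$ (producing $\bm{q_3}$) for every $\beta$, together with, for $\beta>\beta_1$, the two further solutions $x_s$ and $x_l$, whose spin permutations give the sets $\mathfrak{S}^2$ and $\mathfrak{L}^2$ from \eqref{eq:Nota critical}.

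I would then classify each critical point via Lemma \ref{lem:eigenvalues}. With $s=t$, the four Hessian eigenvalues (up to a positive multiple of $J$) are encoded by $\Phi'(s)\pm 1$ and $\Psi'(s)\pm 1$, whose $J\to\infty$ limits are $\Phi'(s)=\frac{1}{\beta s}$ and $\Psi'(s)=\frac{1}{3\beta s(1-2s)}$. At $\bm{q_3}$ both limits equal $3/\beta$, so every eigenvalue is positive (local minimum) when $\beta<\beta_3=3$, while two eigenvalues become negative (a saddle of Morse index two) when $\beta>\beta_3$. At $\mathfrak{S}^2$ I would verify the two inequalities $\beta x_s<1$ and $3\beta x_s(1-2x_s)<1$ hold throughout $\beta>\beta_1$, both consequences of $x_s<m_1<1/3$ combined with $\beta=\xi(x_s)$; this gives $\Phi'(x_s),\Psi'(x_s)>1$, so every eigenvalue is positive and $\mathfrak{S}^2$ consists of local minima. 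At $\mathfrak{L}^2$ exactly one of the analogous inequalities flips at $\beta=\beta_3$: one checks $3\beta x_l(1-2x_l)>1$ on $(\beta_1,\beta_3)$ (since $x_l\in(m_1,1/3)$) and $\beta x_l>1$ on $(\beta_3,\infty)$ (since $x_l>1/3$). In either regime exactly one eigenvalue is negative, producing an index-one saddle.

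Finally, to separate cases (2) and (3) I would compare $F_{\beta,\infty}(\bm{q_3})$ with the common value of $F_{\beta,\infty}$ on $\mathfrak{S}^2$. On the synchronized subspace $\bm{x}=\bm{y}$, direct calculation gives $G(\bm{x},\bm{x})=2\bigl[-\frac{1}{2}\sum_i x_i^2 + \frac{1}{\beta}\sum_i x_i\log x_i\bigr]$, which is twice the one-component CWP free energy for $q=3$ at the same $\beta$; since our critical points all lie on this subspace, the crossover reduces to the classical one at $\beta_2=4\log 2$ \cite{Ellis,q=00003D3}, giving the statement that $\bm{q_3}$ is the unique global minimum for $\beta_1<\beta<\beta_2$ and merely a local minimum for $\beta_2<\beta<\beta_3$, with $\mathfrak{S}^2$ taking over as the three global minima. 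The main obstacles are the structural step of Paragraph~2 (ruling out asymmetric critical points via the monotone-concave fixed-point argument) and the sharp monotonicity identifications $\beta x_l=1$ and $3\beta x_l(1-2x_l)=1$ at $\beta=\beta_3$ that drive the case split in Paragraph~3; once these are in hand the value comparison is elementary.
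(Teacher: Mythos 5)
Your proposal follows the paper's proof in all essential respects: the same reduction of every critical point to the fully synchronized form $(s,s,1-2s,s,s,1-2s)$ via monotone/concave fixed-point counting, the same eigenvalue sign analysis (obtained here as the $J\to\infty$ limit of Lemma~\ref{lem:eigenvalues}, where the paper instead computes the $J=\infty$ Hessian directly), and the same $\beta_{2}$ crossover for the comparison of critical values (you invoke the classical one-component CWP crossover, while the paper computes the explicit function $\widetilde{F}$ of \eqref{eq:F tilde}; both are correct). One small imprecision worth fixing: the two stationarity conditions produce two a priori different additive constants, so each $x_k$ is a fixed point of $\psi_2\circ\psi_1$ rather than of an iterate $\psi\circ\psi$ of a single map, and the statement ``fixed points of the iterate coincide with fixed points of the map'' does not directly apply at that stage; the needed conclusion survives because $\psi_2\circ\psi_1$ is again strictly increasing and strictly concave, hence has at most two fixed points. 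The subsequent step, where you pass to $t=\Psi_\infty(s)$, $s=\Psi_\infty(t)$ and use monotonicity of the single map $\Psi_\infty$ to force $s=t$, is exactly the paper's argument in different notation.
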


\begin{proof}
The function \eqref{eq:F_beta,J q=00003D3} becomes
\begin{equation}
F_{\beta}(\bm{x},\bm{y})=-\sum_{i=1}^{3}x_{i}y_{i}+\frac{1}{\beta}\left(\sum_{i=1}^{3}x_{i}\log x_{i}+y_{i}\log y_{i}\right).\label{eq:F_beta J=00003Dinfty}
\end{equation}
The critical points must satisfy the following equations:
\begin{align}
 & \frac{\partial F_{\beta}}{\partial x_{k}}(\bm{x},\bm{y})=-(y_{k}-y_{3})+\frac{1}{\beta}\left(\log x_{k}-\log x_{3}\right)=0,\label{eq:deri J=00003Dinfty}\\
 & \frac{\partial F_{\beta}}{\partial y_{k}}(\bm{x},\bm{y})=-(x_{k}-x_{3})+\frac{1}{\beta}\left(\log y_{k}-\log y_{3}\right)=0,\nonumber 
\end{align}
for $1\leq k\leq3.$ Since the equations in \eqref{eq:deri J=00003Dinfty}
are symmetric, we consider the following system of equations:
\begin{align}
 & y=\frac{1}{\beta}\log x+u\;\;\;\text{and}\;\;\;x=\frac{1}{\beta}\log y+v,\label{eq:logx+u logx+v}
\end{align}
where $u,v$ are real positive numbers. The function $\frac{1}{\beta}\log x$
is concave and increases; thus \eqref{eq:logx+u logx+v} has at most
two solutions. Moreover, if $x_{k}=x_{l}$, then by \eqref{eq:deri J=00003Dinfty},
we have $y_{k}=y_{l}$, for $1\leq k,l\leq3$. Thus, all the critical
points are of the form $(s,s,1-2s,t,t,1-2t)$ up to permutations.
In fact, all of them are of the form $(s,s,1-2s,s,s,1-2s)$. This
is because, from the equations
\begin{align*}
 & s=\frac{1}{\beta}\log t+u\;\;\;\text{and}\;\;\;1-2s=\frac{1}{\beta}\log(1-2t)+u,\\
 & t=\frac{1}{\beta}\log s+v\;\;\;\text{and}\;\;\;1-2t=\frac{1}{\beta}\log(1-2s)+v,
\end{align*}
we obtain
\begin{equation}
1-3s=\frac{1}{\beta}\log\frac{1-2t}{t}\;\;\;\text{and}\;\;\;1-3t=\frac{1}{\beta}\log\frac{1-2s}{s}.\label{eq:1-3s=00003Dlog1-2t}
\end{equation}
By subtracting the equations in \eqref{eq:1-3s=00003Dlog1-2t}, we
have
\[
1-3s+\frac{1}{\beta}\log\frac{1-2s}{s}=1-3t+\frac{1}{\beta}\log\frac{1-2t}{t}.
\]
Since the function $1-3x+\frac{1}{\beta}\log\frac{1-2x}{x}$ is decreasing,
we have $s=t.$ 

Thus, all the critical points are of the form $(s,s,1-2s,s,s,1-2s)$
and from \eqref{eq:1-3s=00003Dlog1-2t}, we obtain the inverse temperature
$\beta>0:$ 
\begin{equation}
\beta=\frac{1}{1-3s}\log\frac{1-2s}{s}.\label{eq:invtem J=00003Dinfty}
\end{equation}
Hence, if $\beta>\beta_{1}$, then there are two types of critical
points up to permutations, except $\bm{q_{3}}$:
\[
\bm{p_{1}}=(x_{s},x_{s},1-2x_{s},x_{s},x_{s},1-2x_{s})\;\;\text{and}\;\;\;\bm{p_{2}}=(x_{l},x_{l},1-2x_{l},x_{l},x_{l},1-2x_{l}),
\]
A straightforward calculation gives the eigenvalues of the Hessian
of \eqref{eq:F_beta J=00003Dinfty} that
\begin{equation}
\lambda_{1}=\frac{1}{\beta s}-1,\;\;\;\lambda_{2}=\frac{1}{\beta s}+1,\label{eq:eigen1,2 J=00003Dinfty}
\end{equation}
\begin{equation}
\text{\ensuremath{\lambda_{3}=}}\frac{1}{\beta s(1-2s)}-3,\;\;\text{and}\;\;\;\lambda_{4}=\frac{1}{\beta s(1-2s)}+3\label{eq:eigen 3,4 J=00003Dinfty}
\end{equation}
Since $\lambda_{2}$ and $\lambda_{4}$ are always positive, we only
need to know when $\lambda_{1}>0$ and $\lambda_{3}>0$. By substituting
\eqref{eq:invtem J=00003Dinfty} into \eqref{eq:eigen1,2 J=00003Dinfty}
and \eqref{eq:eigen 3,4 J=00003Dinfty}, we obtain that $\lambda_{1}>0$
if and only if $0<s<\frac{1}{3}$, and $\lambda_{3}>0$ if and only
if $0<s<m_{1}$ or $\frac{1}{3}<s<\frac{1}{2}$. Therefore, regardless
of the value of $\beta>\beta_{1}$, we can conclude that $\bm{p_{1}}$
is a local minimum, and $\bm{p_{2}}$ is a saddle point. The point
$\bm{q_{3}}$ is a local minimum if $\beta<\beta_{3}$, but it is
neither a local minimum nor a saddle point if $\beta>\beta_{3}$.

It remains to compare the local minima when $\beta_{1}<\beta<\beta_{2}$
and $\beta_{2}<\beta<\beta_{3}$. By symmetry, the function values
at $\bm{p_{1}}$ up to permutations are the same. A straightforward
computation yields
\[
F_{\beta,\;J}(\bm{p_{1}})-F_{\beta,\;J}(\bm{q_{3}})=\widetilde{F}(x_{s}),
\]
where
\begin{equation}
\widetilde{F}(x)=-6x^{2}+4x-\frac{2}{3}+\frac{2}{\beta}\left(2x\log x+(1-2x)\log(1-2x)+\log3\right),\label{eq:F tilde}
\end{equation}
and $\beta=\xi(x)$. The function $\widetilde{F}(x)$ is positive
on $\left(\frac{1}{6},\frac{1}{3}\right)$ and is negative on $\left(0,\frac{1}{6}\right)$
and $\left(\frac{1}{3},\frac{1}{2}\right)$. If $\beta_{1}<\beta<\beta_{2}$,
then $x_{s}\in\left(\frac{1}{6},\frac{1}{3}\right)$; hence $F_{\beta,\;J}(\bm{p_{1}})>F_{\beta,\;J}(\bm{q_{3}}).$
However, if $\beta_{2}<\beta<\beta_{3}$, then $x_{s}\in\left(0,\frac{1}{6}\right)$;
thus $F_{\beta,\;J}(\bm{p_{1}})<F_{\beta,\;J}(\bm{q_{3}})$. This
completes the proof.
\end{proof}
Next, we prove the other extreme case when $J=0$.
\begin{thm}
\label{thm:J=00003D0}Suppose that $J=0$. (i.e., $J_{11}=J_{22}=1$
and $J_{12}=0$), that is, there is no inter-component interaction
between the two blocks. Then, we have the followings.
\begin{enumerate}
\item If $0<\beta<\beta_{1}$, then there is only one local minimum at $\bm{q_{3}}.$
\item If $\beta_{1}<\beta<\beta_{2}$, then there is a global minimum at
$\bm{q_{3}}$, nine local minima and $18$ saddle points.
\item If $\beta_{2}<\beta<\beta_{3}$, then there is a local minimum at
$\bm{q_{3}}$, nine global minima and $18$ saddle points
\item If $\beta>\beta_{3},$ then there is a local maximum at $\bm{q_{3}}$,
nine global minima, and $18$ saddle points.
\end{enumerate}
\end{thm}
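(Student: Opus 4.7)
The plan is to exploit the fact that the free energy completely decouples between the two components when $J = 0$. Substituting $J = 0$ into \eqref{eq:F_beta,J q=00003D3} gives
\[
F_{\beta,\,0}(\bm{x}, \bm{y}) \;=\; G_\beta(\bm{x}) + G_\beta(\bm{y}), \qquad G_\beta(\bm{z}) \;:=\; -\tfrac{1}{2}\sum_{i=1}^{3} z_i^2 + \tfrac{1}{\beta}\sum_{i=1}^{3} z_i \log z_i,
\]
where $G_\beta$ is the one--component Curie--Weiss--Potts free energy at $q=3$. Consequently $\nabla F_{\beta,\,0}(\bm{x}, \bm{y}) = (\nabla G_\beta(\bm{x}), \nabla G_\beta(\bm{y}))$, so every critical point of $F_{\beta,\,0}$ is a Cartesian pair of critical points of $G_\beta$, and the Hessian is the block sum $\nabla^{2} G_\beta(\bm{x}) \oplus \nabla^{2} G_\beta(\bm{y})$, whose eigenvalues are the union of the eigenvalues of the two blocks. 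In particular, a product $(\bm{x}, \bm{y})$ is a local minimum of $F_{\beta,\,0}$ if and only if both factors are local minima of $G_\beta$, a local maximum if and only if both are, and a saddle otherwise.

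The problem therefore reduces to the classical $q = 3$ one--component classification, which I would redo inside the same $\Phi, \Psi$ framework set up in Section \ref{subsec:Critical pts}. The substitution that led to \eqref{eq:y=00003DPsi(x), x=00003DPsi(y)} shows that, up to spin permutations, each critical point of $G_\beta$ is either the uniform point $\bm{m} := (1/3, 1/3, 1/3)$ or of the form $(s, s, 1-2s)$ with $\beta = \xi(s)$; hence for $\beta > \beta_1$ we obtain three permutations each of $(x_s, x_s, 1-2x_s)$ and $(x_l, x_l, 1-2x_l)$. Diagonalising the $2 \times 2$ Hessian of $G_\beta$ in the eigenbasis $\{(1,1), (1,-1)\}$ at such a point yields eigenvalues $-3 + \tfrac{1}{\beta s} + \tfrac{2}{\beta(1-2s)}$ and $-1 + \tfrac{1}{\beta s}$; the first changes sign exactly at $s = m_1$ while the second stays positive on the $x_s$-branch, so the $x_s$-type points are local minima and the $x_l$-type points are saddles. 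At $\bm{m}$ the eigenvalues are $-3 + 9/\beta$ and $-1 + 3/\beta$, both flipping at $\beta = \beta_3 = 3$, so $\bm{m}$ is a local minimum for $\beta < \beta_3$ and a local maximum for $\beta > \beta_3$. The value comparison between $G_\beta(\bm{m})$ and $G_\beta(x_s, x_s, 1-2x_s)$ is governed by essentially the same auxiliary function $\widetilde F$ of \eqref{eq:F tilde} from the $J = \infty$ case, which crosses zero precisely at $\beta = \beta_2$.

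Combining the block-diagonal Hessian rule with this one--component picture produces the four regimes of the theorem. For $\beta < \beta_1$ only $\bm{q_{3}} = (\bm{m}, \bm{m})$ is critical and is the unique local (hence global) minimum. For $\beta_1 < \beta < \beta_2$ the new critical points appear pairwise; the value comparison places $\bm{q_{3}}$ as the unique global minimum, the products of non-uniform local minima of $G_\beta$ give nine additional local minima of the form $(x_s, x_s)$, and the saddles are the products involving an $x_l$-permutation. For $\beta_2 < \beta < \beta_3$ the ordering of values inverts, promoting the nine $(x_s, x_s)$-products to global minima and demoting $\bm{q_{3}}$ to a non-global local minimum. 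For $\beta > \beta_3$ the uniform point $\bm{m}$ becomes a local maximum of $G_\beta$, so $\bm{q_{3}}$ becomes a local maximum of $F_{\beta,\,0}$, while the rest of the product picture is preserved.

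The main obstacle is not the reduction to products -- that is immediate once $J = 0$ is substituted -- but the careful one--component eigenvalue and value analysis, particularly certifying the sign change of the $(1,1)$-eigenvalue across $s = m_1$ (which is what distinguishes the $x_l$-saddles from the $x_s$-minima) and verifying the value crossover at $\beta = \beta_2$ via $\widetilde F$. Both computations are local and parallel to those in the proof of Theorem \ref{thm:J=00003Dinfty}, but must be redone for $G_\beta$ on the 2-simplex rather than for $F_{\beta,\,\infty}$ on the diagonal. Once these ingredients are in place, the desynchronization conclusion of Theorem \ref{thm:-extreme case}(2) is transparent: the global minima for $\beta > \beta_2$ include points of the form $(x_s^{(i)}, x_s^{(j)})$ with $i \ne j$, i.e.\ with the two components picking different dominant spins independently, so the global minima do not all lie in $\mathfrak{S}^2 \cup \mathfrak{L}^2$ and the two components are desynchronized.
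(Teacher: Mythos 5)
Your decoupling $F_{\beta,0}(\bm{x},\bm{y}) = G_\beta(\bm{x}) + G_\beta(\bm{y})$ and the block-diagonal Hessian rule are exactly what the paper does implicitly: the first-order equations separate in $\bm{x}$ and $\bm{y}$, every critical point of $F_{\beta,0}$ is a pair of critical points of the one-component $G_\beta$, the eigenvalue formulas come from substituting $J=0$ in Lemma \ref{lem:eigenvalues}, and the value comparison uses the same $\widetilde F$ as in Theorem \ref{thm:J=00003Dinfty}. So the approach is the same, just framed more explicitly as a Cartesian product.

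However, if you apply your own product rule consistently, it does not reproduce the counts you report. The uniform point $\bm{m}=(1/3,1/3,1/3)$ is a critical point of $G_\beta$ for every $\beta$ --- the reduction to $\beta=\xi(s)$ silently drops the solution $s=1/3$, where $1-3s$ and $\log\frac{1-2s}{s}$ both vanish --- and for $\beta_1<\beta<\beta_3$ both eigenvalues $-3+9/\beta$ and $-1+3/\beta$ of $\nabla^2 G_\beta(\bm{m})$ are positive, so $\bm{m}$ is a local minimum of $G_\beta$. By your product rule, the six pairs of the form $(\bm{m},\,\text{perm of }(x_s,x_s,1-2x_s))$ and $(\text{perm of }(x_s,x_s,1-2x_s),\,\bm{m})$ are then local minima of $F_{\beta,0}$ beyond $\bm{q_3}$ and the nine $(x_s,x_s)$-pairs you list, and the six analogous pairs with $x_l$ replacing $x_s$ are index-one saddles beyond the eighteen you count. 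The paper's proof omits these same mixed pairs by restricting $s,t\in\{x_s(\beta),x_l(\beta)\}$, so you have inherited a gap rather than introduced one, but your framework makes it visible and you should reconcile it with the stated enumeration. Separately, your blanket ``a saddle otherwise'' is imprecise: when both factors are $1$-saddles of $G_\beta$ (both of $x_l$-type) the Hessian of the product has two negative eigenvalues, giving an index-two critical point that is neither a local minimum nor a saddle; the paper handles this correctly by declaring $\bm{p_2}$ ``neither a local minimum nor a saddle point,'' and your argument needs the same carve-out.
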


\begin{proof}
The function \eqref{eq:F_beta,J q=00003D3} becomes a function of
$\beta$:
\[
F_{\beta}(\bm{x},\bm{y})=-\frac{1}{2}\sum_{i=1}^{3}(x_{i}^{2}+y_{i}^{2})+\frac{1}{\beta}\sum_{i=1}^{3}(x_{i}\log x_{i}+y_{i}\log y_{i}).
\]
The critical points must satisfy the following equations:
\begin{align}
 & \frac{\partial F_{\beta}}{\partial x_{k}}(\bm{x},\bm{y})=-(x_{k}-x_{3})+\frac{1}{\beta}\left(\log x_{k}-\log x_{3}\right)=0,\label{eq:derivative J=00003D0}\\
 & \frac{\partial F_{\beta}}{\partial y_{k}}(\bm{x},\bm{y})=-(y_{k}-y_{3})+\frac{1}{\beta}\left(\log y_{k}-\log y_{3}\right)=0,\nonumber 
\end{align}
for $1\leq k\leq3.$ Clearly, $\bm{q_{3}}$ is a critical point of
$F_{\beta}$. First, we determine the form of all the critical points
of $F_{\beta}$. Since the equations in \eqref{eq:derivative J=00003D0}
are symmetric, we consider the equation
\begin{equation}
x-\frac{1}{\beta}\log x=u,\;\;\text{and}\;\;y-\frac{1}{\beta}\log y=v\label{eq:x-logx}
\end{equation}
where $u,v>0$. The function $x-\frac{1}{\beta}\log x$ is convex;
thus \eqref{eq:x-logx} each of $x$ and $y$ has at most two solutions.
Thus, all the critical points are of the form $(s,s,1-2s,t,t,1-2t)$
up to permutations and from \eqref{eq:derivative J=00003D0}, we obtain
the inverse temperature
\begin{equation}
\beta=\xi(s)=\xi(t).\label{eq:invtem J=00003D0}
\end{equation}
 The function $\xi(x)$ has a unique minimum at $x=m_{1}$ and its
value is $\beta_{1}$. Hence, there is no critical point of the form
$(s,s,1-2s,t,t,1-2t)$ when $\beta<\beta_{1}$. However, when $\beta>\beta_{1},$
from \eqref{eq:invtem J=00003D0}, $s$ can be either $x_{s}(\beta)$
or $x_{l}(\beta)$; the same is true for $t$. Hence, if $\beta>\beta_{1}$,
then there are four types of critical points up to permutations, except
$\bm{q_{3}}$:
\begin{align*}
\bm{p_{1}} & =(x_{s},x_{s},1-2x_{s},x_{s},x_{s},1-2x_{s}),\;\;\bm{p_{2}}=(x_{l},x_{l},1-2x_{l},x_{l},x_{l},1-2x_{l}),\\
\bm{p_{3}} & =(x_{s},x_{s},1-2x_{s},x_{l},x_{l},1-2x_{l}),\;\;\bm{p_{4}}=(x_{l},x_{l},1-2x_{l},x_{s},x_{s},1-2x_{s}).
\end{align*}
Since both the first and the last three coordinates of the aforementioned
critical points behave independently, there are nine permutations
for each. By substituting $J=0$ in the proof of Lemma \eqref{lem:eigenvalues},
we have
\begin{align*}
\lambda_{1}=\frac{1}{\beta s}-1,\;\;\;\lambda_{2}=\frac{1}{\beta t}-1,
\end{align*}
\begin{align*}
\lambda_{3}=\frac{1}{\beta s(1-2s)}-3,\;\;\;\text{and}\;\;\;\lambda_{4}=\frac{1}{\beta t(1-2t)}-3.
\end{align*}
From the proof of Theorem \ref{thm:J=00003Dinfty}, we know that $\lambda_{1}>0$
if and only if $0<s<\frac{1}{3}$, and $\lambda_{3}>0$ if and only
if $0<s<m_{1}$ or $\frac{1}{3}<s<\frac{1}{2}$. The same is true
for $\lambda_{2}$ and $\lambda_{4}$. Therefore, regardless of the
value of $\beta$, we can conclude that $\bm{p_{1}}$ is a local minimum,
$\bm{p_{2}}$ is neither a local minimum nor a saddle point, and $\bm{p_{3}}$
and $\bm{p_{4}}$ are saddle points. The point $\bm{q_{3}}$ is a
local minimum if $\beta<\beta_{3}$, but is a local maximum if $\beta>\beta_{3}$.
The comparison of local minima is the same as the proof of Theorem
\ref{thm:J=00003Dinfty}. This completes the proof.
\end{proof}
\begin{rem}
For any $q\ge3$, the CWP model has a first-order phase transition
at the critical temperature $\beta=\beta_{2}$, which was covered
in \cite{Ellis,beta2-2,Ellis Wang,J.Lee CWP,beta2}. We remark that
the set of global minima is replaced at $\beta=\beta_{2}$ for the
two-component case, and this critical temperature $\beta_{2}$ is
independent of variable $J$. The third statement in Theorem \ref{thm:mid temp psi_2}
cover the general case of $J$.
\end{rem}

\subsection{\label{subsec:Low temp}General case: low-temperature regime $(\beta>\beta_{3})$}

In this section, we will find all the local minima and lowest saddles
of $F_{\beta,\;J}$ which belong to either $\mathfrak{S}^{2}$ or
$\mathfrak{L}^{2}$, and the phase transition boundary for the low-temperature
part of Theorem \ref{thm:sync-desync phase}. Recall that we defined
the function $\psi_{1}$ in \eqref{eq:psi_1,2,3}. Then, the function
$\psi_{1}$ has the following properties.
\begin{prop}
The function $\psi_{1}$ is a continuous function such that
\[
0<\psi_{1}(\beta)<1,\;\;\psi_{1}(\beta_{3})=0,\;\;\text{and}\;\;\lim_{\beta\rightarrow\infty}\psi_{1}(\beta)=1.
\]
\end{prop}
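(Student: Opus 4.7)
The plan is to reduce every claim about $\psi_1$ to the behavior of the larger root $x_l(\beta)$ of the equation $\beta=\xi(x)$, which was analyzed in the paragraph preceding \eqref{eq:invtem}. Since $\xi$ has its unique minimum at $m_1$ with value $\beta_1$, is strictly increasing on $(m_1,1/2)$, and blows up as $x\to 1/2^-$, the larger root $x_l(\beta)$ is a well-defined, strictly increasing, continuous (in fact smooth by the implicit function theorem, as $\xi'(x_l)>0$) function of $\beta$ on $(\beta_1,\infty)$, with $x_l(\beta)\to 1/2$ as $\beta\to\infty$.

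First I would identify the value $x_l(\beta_3)$. The function $\xi(x)=\frac{1}{1-3x}\log\frac{1-2x}{x}$ has a removable singularity at $x=1/3$: a single application of L'H\^opital to the $0/0$ form at $x=1/3$ gives $\xi(1/3)=3=\beta_3$. Combined with strict monotonicity of $\xi$ on $(m_1,1/2)$ and the elementary fact $1/3>m_1\approx 0.2076$, this forces $x_l(\beta_3)=1/3$. Plugging this into the definition of $\psi_1$ yields $\psi_1(\beta_3)=(3-3)/(3+3)=0$, which also shows that $\psi_1$ is continuous at $\beta_3$ (the indicator switches on exactly where the quotient vanishes). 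Continuity on $(\beta_3,\infty)$ is immediate from continuity of $x_l$ and the fact that $x_l(\beta)\geq 1/3>0$ there, so the denominator $\beta+1/x_l$ never vanishes.

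Next, for $\beta>\beta_3$, I would verify $0<\psi_1(\beta)<1$. The upper bound $\psi_1(\beta)<1$ is automatic, since both $\beta$ and $1/x_l(\beta)$ are positive and we are subtracting a positive quantity in the numerator. For the lower bound, strict monotonicity of $x_l$ gives $x_l(\beta)>x_l(\beta_3)=1/3>1/\beta$ for $\beta>\beta_3=3$; rearranging gives $\beta>1/x_l(\beta)$, so the numerator of $\psi_1(\beta)$ is strictly positive.

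Finally, for the limit, I would use $x_l(\beta)\to 1/2$ as $\beta\to\infty$ (which follows because $\xi(x)\to\infty$ only as $x\to 1/2$, while $\xi$ is bounded on compact subsets of $(m_1,1/2)$). This gives $1/x_l(\beta)\to 2$, so
\[
\psi_1(\beta)=\frac{\beta-1/x_l(\beta)}{\beta+1/x_l(\beta)}\longrightarrow 1,\qquad\beta\to\infty.
\]
There is no real obstacle here; the only subtle point is the continuity matching at $\beta=\beta_3$, which hinges on the L'H\^opital identification $\xi(1/3)=\beta_3$ and on verifying that $1/3$ is indeed the larger (not the smaller) root at $\beta=\beta_3$, both of which follow from the already-established shape of $\xi$.
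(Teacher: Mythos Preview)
Your proposal is correct and follows essentially the same approach as the paper: both arguments reduce everything to the behavior of $x_l(\beta)$, using $x_l(\beta_3)=1/3$, $x_l(\beta)\to 1/2$ as $\beta\to\infty$, and continuity of $x_l$. Your positivity argument (via $x_l>1/3>1/\beta$ for $\beta>3$) is slightly more elementary than the paper's, which instead invokes the inequality $\xi(x)>1/x$ on $(1/3,1/2)$, but the overall structure is the same.
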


\begin{proof}
Note that $x_{l}$ is a continuous function of $\beta$, and $x_{l}\in\left(\frac{1}{3},\frac{1}{2}\right)$
if $\beta>\beta_{3}$. Since $\psi_{1}(\beta)$ is a composition of
continuous functions, it is continuous. It is obvious that $\psi_{1}(\beta)<1$.
Since $x_{l}>\frac{1}{3},$ we have $\beta=\frac{1}{1-3x_{l}}\log\frac{1-2x_{l}}{x_{l}}>\frac{1}{x_{l}},$
which means that $\psi_{1}(\beta)>0$. Note that $\lim_{\beta\rightarrow\beta_{3}}x_{l}=\frac{1}{3}$
and $\lim_{\beta\rightarrow\infty}x_{l}=\frac{1}{2}$. This implies
that $\psi_{1}(\beta_{3})=\lim_{\beta\rightarrow\beta_{3}}\psi_{1}(\beta)=0$
and $\lim_{\beta\rightarrow\infty}\psi_{1}(\beta)=1$. This completes
the proof.
\end{proof}
\begin{prop}
\label{prop:psi_1 boundary}Suppose that $u=v$ in \eqref{eq:Phi(x)+u,Phi(y)+v}
and there are two intersections: $\bm{P}=(P,P)$ and $\bm{Q}=(Q,Q)$
with $P<Q$. Suppose further that $\Phi'(Q)=-1$. Then, $J>\psi_{1}(\beta)$
if and only if $P+2Q>1$. In particular, $J=\psi_{1}(\beta)$ if and
only if $P+2Q=1$.
\end{prop}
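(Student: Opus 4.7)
The plan is to reduce the problem to the one-variable convex function $f(x) := x - \tfrac{1}{\beta}\log x$, whose unique minimum sits at $x = 1/\beta$, and to analyze the level sets of $f$. Because $\bm{P}$ and $\bm{Q}$ lie on the diagonal $y=x$, the system \eqref{eq:Phi(x)+u,Phi(y)+v} with $u=v$ collapses to the scalar equation $x - \Phi(x) = u$, which rearranges to $f(x) = \frac{J}{1+J}u$; hence $P$ and $Q$ are the two roots of a common level of $f$, and strict convexity forces $P < 1/\beta < Q$. Meanwhile, the formula $\Phi'(x) = \tfrac{1}{J}\bigl(\tfrac{1+J}{\beta x}-1\bigr)$ recorded in the proof of Lemma \ref{lem:eigenvalues} translates the hypothesis $\Phi'(Q) = -1$ into the explicit relation $Q = \frac{1+J}{\beta(1-J)}$, which is strictly increasing in $J \in (0,1)$.

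For the equality assertion ``$J = \psi_1(\beta) \Leftrightarrow P + 2Q = 1$'', I would substitute $P = 1 - 2Q$ into $f(P) = f(Q)$; after simplification this collapses to the defining equation $\xi(Q) = \beta$. In the low-temperature regime $\beta > \beta_3$, the function $\xi$ is strictly increasing on $(m_1, 1/2)$ and $Q > 1/\beta > m_1$, so the unique solution is $Q = x_l$. Solving $\frac{1+J}{\beta(1-J)} = x_l$ for $J$ then returns $J = \frac{\beta x_l - 1}{\beta x_l + 1} = \psi_1(\beta)$.

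For the strict inequality, the key step is to prove that the map $G(Q) := P(Q) + 2Q$, where $P(Q)$ denotes the smaller root of $f(\cdot)=f(Q)$, is strictly increasing on $(1/\beta,\infty)$; combined with $G(x_l)=1$ and the monotone dependence of $Q$ on $J$, this will yield $J > \psi_1(\beta) \Leftrightarrow Q > x_l \Leftrightarrow P+2Q > 1$. Implicit differentiation of $f(P) = f(Q)$ gives
\[
G'(Q) \;=\; \frac{f'(Q) + 2f'(P)}{f'(P)},
\]
and since $f'(P) < 0$, the sign of $G'(Q)$ coincides with that of $\tfrac{1}{\beta Q} + \tfrac{2}{\beta P} - 3$. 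Introducing the parameter $u := e^{\beta(Q-P)} > 1$ and using $\log(Q/P) = \beta(Q-P)$, I obtain the explicit parametrization $P = \frac{\log u}{\beta(u-1)}$, $Q = \frac{u\log u}{\beta(u-1)}$, which reduces the required inequality $\frac{1}{Q} + \frac{2}{P} > 3\beta$ to the scalar inequality $g(u) := 2u^2 - u - 1 - 3u\log u > 0$ on $u > 1$. Since $g(1) = g'(1) = 0$ and $g''(u) = 4 - 3/u > 0$ on $u > 3/4$, the claim follows by integrating twice.

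The main obstacle is exactly the monotonicity of $G$: the parametrization by $u$ and the elementary calculus for $g$ are what make this clean, and without them one risks becoming entangled in case analysis depending on whether $1 - 2Q$ lies above or below $1/\beta$. Everything else---the reduction to $f$, the identification of $P + 2Q = 1$ with $\xi(Q) = \beta$, and the passage between $Q$ and $J$---amounts to routine algebra.
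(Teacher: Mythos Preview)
Your argument is correct and complete, with one small slip: the inequality $1/\beta > m_1$ you invoke in the equality case fails once $\beta > 1/m_1 \approx 4.82$. What you actually need there is $Q \in (m_1, 1/2)$, and this follows directly from $P + 2Q = 1$ with $0 < P < Q$, which forces $1/3 < Q < 1/2$; so the gap is cosmetic and the conclusion $Q = x_l$ stands. (In any case, your later monotonicity argument for $G$ already subsumes the equality case.)

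Your route, however, differs from the paper's. The paper argues each implication separately by comparing $f(1-2Q)$ with $f(Q)=f(P)$ on the decreasing branch $(0,1/\beta)$ of $f$: starting from $1-2Q<P$ one gets $f(1-2Q)>f(Q)$, which rearranges to $\xi(Q)>\beta$ and hence $Q>x_l$; the converse reverses these steps after first checking that $J>\psi_1(\beta)$ forces $Q>1/3$. No parametrization or auxiliary function is used. You instead package the whole equivalence into the strict monotonicity of $G(Q) = P(Q) + 2Q$, which you prove via the substitution $u = e^{\beta(Q-P)}$ and the elementary inequality $2u^2 - u - 1 > 3u\log u$ on $u>1$. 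This is slightly heavier machinery, but it yields a clean global statement ($G$ strictly increasing on all of $(1/\beta,\infty)$) rather than a pointwise comparison, and it handles the strict-inequality and equality assertions in one stroke without separate case analysis.
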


\begin{proof}
Suppose that $P+2Q>1$. Since$P<Q$, we have $Q>\frac{1}{3}$. The
condition $\Phi'(Q)=-1$ is equivalent to $Q=\frac{1+J}{\beta(1-J)}$.
By the definitions of $P$ and $Q$, we have
\begin{equation}
P-\frac{1}{\beta}\log P=Q-\frac{1}{\beta}\log Q.\label{eq:P,Q}
\end{equation}
The function $x-\frac{1}{\beta}\log x$ is convex and has a minimum
at $x=\frac{1}{\beta}$; hence $P<\frac{1}{\beta}<Q$. Since $x-\frac{1}{\beta}\log x$
is decreasing on $\left(0,\frac{1}{\beta}\right)$, from the inequality
$1-2Q<P$ and \eqref{eq:P,Q}, we obtain 
\begin{equation}
\beta<\frac{1}{1-3Q}\log\frac{1-2Q}{Q}.\label{eq:beta<Q}
\end{equation}
Thus, $Q>x_{l}=x_{l}(\beta)$, or equivalently, we obtain
\[
J>\psi_{1}(\beta)=\frac{\beta-\frac{1}{x_{l}}}{\beta+\frac{1}{x_{l}}}.
\]
On the other hand, we assume that $J>\psi_{1}(\beta),$ which is equivalent
to $Q>x_{l}$. Since $x_{l}\in\left(\frac{1}{3},\frac{1}{2}\right)$
for $\beta>\beta_{3}$, we have
\[
\psi_{1}(\beta)-\frac{\beta-3}{\beta+3}=\frac{2\beta(3-1/x_{l})}{(\beta+1/x_{l})(\beta+3)}\geq0\;\;\;\text{for all}\;\;\beta>0.
\]
Hence, $J>\frac{\beta-3}{\beta+3}$ and this implies that $Q>\frac{1}{3}$.
Since $Q>x_{l}$, \eqref{eq:beta<Q} holds. By \eqref{eq:P,Q}, the
inequality \eqref{eq:beta<Q} becomes
\[
1-2Q-\frac{1}{\beta}\log(1-2Q)<Q-\frac{1}{\beta}\log Q=P-\frac{1}{\beta}\log P.
\]
This implies that $1-2Q<P$. This completes the proof.
\end{proof}
Note that by symmetry, the function values at $\bm{x_{s}}$ or $\bm{x_{l}}$
up to permutations are the same, respectively. Then, we have the following
theorem for the low-temperature regime.
\begin{thm}
\label{thm:low temp psi_1}Suppose that $\beta>\beta_{3}$. Then,
we have the following results.
\begin{enumerate}
\item If $J>\psi_{1}(\beta)$, then $F_{\beta,\;J}$ has three local minima
at $\bm{x_{s}}\in\mathfrak{S}^{2}$ and three saddle points at $\bm{x_{l}}\in\mathfrak{L}^{2}$.
\item If $J<\psi_{1}(\beta)$, then there must be a lowest saddle of $F_{\beta,\;J}$
that does not belong to either $\mathfrak{S}^{2}$ or $\mathfrak{L}^{2}$.
\end{enumerate}
\end{thm}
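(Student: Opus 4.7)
The approach is to read off the synchronization/desynchronization boundary at $\beta>\beta_{3}$ from the Hessian eigenvalue decomposition of Lemma \ref{lem:eigenvalues}. Any point in $\mathfrak{S}^{2}$ (resp.\ $\mathfrak{L}^{2}$) has the form $(s,s,1-2s,s,s,1-2s)$ with $s=x_{s}$ (resp.\ $s=x_{l}$), so taking $s=t$ in Lemma \ref{lem:eigenvalues} shows that the four Hessian eigenvalues split into two pairs governed entirely by $\Phi'(s)$ and $\Psi'(s)$: a pair is positive iff the derivative exceeds $1$, negative iff it falls below $-1$, and has mixed signs iff it lies in $(-1,1)$. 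The algebraic identity driving the whole argument, obtained by rearranging $\psi_{1}(\beta)$ in \eqref{eq:psi_1,2,3} against $\Phi$ in \eqref{eq:Phi(x)}, is
\[
J>\psi_{1}(\beta)\;\Longleftrightarrow\;x_{l}<\tfrac{1+J}{\beta(1-J)}\;\Longleftrightarrow\;\Phi'(x_{l})>-1,
\]
so $\psi_{1}$ is precisely the bifurcation locus at which the first eigenvalue pair at $\bm{x_{l}}$ changes sign.

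For part (1), I would first verify $\Phi'(x_{s})>1$ and $\Psi'(x_{s})>1$, which reduce to $x_{s}<1/\beta$ and $3\beta x_{s}(1-2x_{s})<1$; both hold throughout $\beta>\beta_{1}$ by the monotonicity of $\xi$ on $(0,m_{1})$ together with $\xi(x_{s})=\beta$, exactly as in the one-component $q=3$ analysis. Hence both eigenvalue pairs at $\bm{x_{s}}$ are positive and $\bm{x_{s}}$ is a local minimum. For $\bm{x_{l}}$, the hypothesis $J>\psi_{1}(\beta)$ gives $\Phi'(x_{l})>-1$; the companion inequality $\Phi'(x_{l})<1$ reduces to $\beta x_{l}>1$, which holds because at $\beta=\beta_{3}$ both sides equal $1$ and implicit differentiation of $\beta=\xi(x_{l})$ gives $dx_{l}/d\beta=1/\xi'(x_{l})>0$, so $\beta x_{l}$ strictly increases past $1$ for $\beta>\beta_{3}$. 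Therefore $|\Phi'(x_{l})|<1$, the first eigenvalue pair at $\bm{x_{l}}$ has opposite signs, and $\bm{x_{l}}$ is a saddle; the three permutations of the underlying magnetization vector supply three minima in $\mathfrak{S}^{2}$ and three saddles in $\mathfrak{L}^{2}$.

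For part (2), the same identity forces $\Phi'(x_{l})<-1$, which, as illustrated in Figure \ref{fig:Phi(x)}, is the regime in which the system $y=\Phi(x)+u$, $x=\Phi(y)+u$ with $u=v$ gains two off-diagonal intersections $\bm{R}=(R,S),\,\bm{S}=(S,R)$ besides the diagonal $\bm{P},\bm{Q}$. Assigning $(x_{i},y_{i})$ for $i=1,2,3$ to be $\bm{P},\bm{R},\bm{S}$ and tuning $u$ so that $P+R+S=1$ produces a critical point of $F_{\beta,J}$ at $(P,R,S,P,S,R)$, which has three pairwise distinct coordinates in each component and so lies outside $\mathfrak{S}^{2}\cup\mathfrak{L}^{2}$ and outside the restricted form to which Lemma \ref{lem:eigenvalues} applies. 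I would then perform the Hessian calculation directly at this three-distinct-coordinate configuration, confirm Morse index one by an eigenvalue computation modeled on the proof of Lemma \ref{lem:eigenvalues}, and compare $F_{\beta,J}$ there with $F_{\beta,J}(\bm{x_{l}})$ via a one-variable reduction analogous to the comparison of local minima in Theorem \ref{thm:class of critical points q=00003D2}: substitute \eqref{eq:Phi(x)+u,Phi(y)+v} into $F_{\beta,J}$ to eliminate the logarithmic terms, exploit the symmetry $\bm{R}\leftrightarrow\bm{S}$ to reduce to a function of a single parameter, and finish by monotonicity.

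The main obstacle will be precisely this last comparison. The new saddle is defined only implicitly through a transcendental system in $(R,S,u)$, so neither its coordinates nor $F_{\beta,J}$ there admits a closed form, and both the Morse index and the strict energy gap must be read off indirectly. The cleanest route I see is a perturbation at the threshold $J=\psi_{1}(\beta)$: at the threshold $\bm{R}$ and $\bm{S}$ collide with $\bm{Q}=\bm{x_{l}}$, the triple $(P,R,S)=(1-2x_{l},x_{l},x_{l})$ is exactly a permutation of the $\bm{x_{l}}$ configuration, and the corresponding Hessian eigenvalue of $\bm{x_{l}}$ vanishes. A first-order expansion in $J-\psi_{1}(\beta)<0$ along the resulting pitchfork should simultaneously yield the strict energy decrease below $F_{\beta,J}(\bm{x_{l}})$ and pin down the Morse index of the bifurcating saddle; the conclusion for all $J<\psi_{1}(\beta)$ then follows by continuation, provided no secondary bifurcation intervenes, which would have to be excluded by a non-degeneracy check along the continuation path.
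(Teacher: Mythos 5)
Your argument for the first part is essentially the paper's argument: you reduce the Hessian signs to the one-variable slopes $\Phi'$ and $\Psi'$ via Lemma \ref{lem:eigenvalues}, identify $J>\psi_{1}(\beta)$ with $\Phi'(x_{l})>-1$, and verify $\Phi'(x_{s})>1$, $\Psi'(x_{s})>1$, $\Phi'(x_{l})<1$. Two small remarks. First, you never explicitly check $\Psi'(x_{l})>1$, which is needed to conclude that the second eigenvalue pair at $\bm{x_{l}}$ is positive (so that $\bm{x_{l}}$ has index exactly one); the paper obtains this directly from the fact that $x_{l}$ is the outermost crossing of $\Psi(x)=x$. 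Second, your derivation of $\beta x_{l}>1$ by implicit differentiation is more roundabout than necessary: the paper just notes the elementary inequality $\tfrac{1}{x}>\tfrac{1}{1-3x}\log\tfrac{1-2x}{x}$ on $(0,\tfrac13)$ applied to $x_{s}$, which forces $x_{s}<\tfrac{1}{\beta}<x_{l}$.

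\textbf{Part (2).} Here your approach is genuinely different from the paper's, and it is both far heavier and incomplete. You set out to construct a new saddle explicitly: locate the off-diagonal intersections $\bm{R},\bm{S}$, tune $u$ so that $P+R+S=1$, compute the Hessian at $(P,R,S,P,S,R)$, pin down its Morse index, and compare energies with $F_{\beta,J}(\bm{x_{l}})$. You candidly flag that all of this is implicit and transcendental, and you fall back on a perturbation-and-continuation sketch that you do not carry out and whose non-degeneracy hypothesis you cannot verify. None of that is needed, and this is exactly where the paper takes a short indirect route. When $J<\psi_{1}(\beta)$ one has $\Phi'(x_{l})<-1$, so by Lemma \ref{lem:eigenvalues} (with $s=t=x_{l}$) the $\Phi$-pair of eigenvalues at $\bm{x_{l}}$ satisfies $\lambda_{1}\lambda_{2}>0$ and $\lambda_{1}+\lambda_{2}<0$: both are negative. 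Combined with $\Psi'(x_{l})>1$ giving two positive eigenvalues, $\bm{x_{l}}$ has Morse index two, hence is no longer an index-one saddle, and no point of $\mathfrak{S}^{2}\cup\mathfrak{L}^{2}$ is a saddle. But $F_{\beta,J}$ still has several distinct local minima (the permutations of $\bm{x_{s}}$), so by the mountain-pass/Morse-theoretic principle there must exist a lowest (index-one) saddle separating them, and it is forced to lie outside $\mathfrak{S}^{2}\cup\mathfrak{L}^{2}$. This settles (2) without ever locating the new saddle, computing its Hessian, or comparing energies. Your constructive program is not wrong in spirit, but as presented it leaves the key steps (Morse index of $(P,R,S,P,S,R)$, strict energy gap, exclusion of secondary bifurcations) as open tasks, whereas the intended argument avoids them entirely.
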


\begin{proof}
(1) From the definitions of $x_{s}$ and $x_{l}$, we have
\begin{align*}
\beta & =\frac{1}{1-3x_{s}}\log\frac{1-2x_{s}}{x_{s}}=\frac{1}{1-3x_{l}}\log\frac{1-2x_{l}}{x_{l}}.
\end{align*}
Note that the function $\frac{1}{x}-\frac{1}{1-3x}\log\frac{1-2x}{x}$
is positive on $0<x<\frac{1}{3}$. Since $x_{s}<\frac{1}{3}$ and
since $x_{l}>\frac{1}{3}$, we obtain the inequality
\begin{align}
x_{s} & <\frac{1}{\beta}<x_{l}.\label{eq:abeta}
\end{align}
To analyze the critical points, we need to investigate the slopes
of $\Phi$ and $\Psi$ at $x=x_{s}$ and $x=x_{l}$. When we look
at the graph $\Psi(x)=x$, it has solutions in the order of $x_{s},\frac{1}{3}$
and $x_{l}$. It follows that $\Psi'(x_{s})>1$ and $\Psi'(x_{l})>1$.
Since $\Phi'(\frac{1}{\beta})=1$, the inequality \eqref{eq:abeta}
implies that $\Phi'(x_{s})>1$ and $\Phi'(x_{l})<1$. $F_{\beta,\;J}$
The assumption $J>\psi_{1}(\beta)$ is equivalent to
\begin{equation}
x_{l}<\frac{1+J}{\beta(1-J)}.\label{eq:-1point}
\end{equation}
The right hand side of \eqref{eq:-1point} is the point where the
function $\Phi$ has a slope of $-1$. Thus, we have $-1<\Phi'(x_{l})<1$
and by Lemma \ref{lem:eigenvalues}, $F_{\beta,\;J}$ has local minima
at $\bm{x_{s}}$ and saddle points at $\bm{x_{l}}$. This proves (1).

(2) The condition $J<\psi_{1}(\beta)$ is equivalent to $x_{l}>\frac{1+J}{\beta(1-J)}$.
This means that $\Phi'(x_{l})<-1$. Thus, by Lemma \ref{lem:eigenvalues},
$\bm{x_{l}}$ is no longer a saddle point, and hence, there is no
saddle point belonging to either $\mathfrak{S}^{2}$ or $\mathfrak{L}^{2}$.
However, according to Morse's theory, when there are two or more local
minima, there must be a saddle point with the lowest level connecting
them. Thus, there must be a saddle point that does not belong to either
$\mathfrak{S}^{2}$ or $\mathfrak{L}^{2}$. This completes the proof.
\end{proof}
\begin{lem}
\label{lem:2P+Q increase}Suppose that $u=v$ in \eqref{eq:Phi(x)+u,Phi(y)+v}
and suppose that \eqref{eq:Phi(x)+u,Phi(y)+v} intersects only two
points at $\bm{P}=(P,P)$ and $\bm{Q}=(Q,Q)$ with $P<Q$. Then $2P+Q$
and $P+2Q$ are increasing functions with respect to $u$.
\end{lem}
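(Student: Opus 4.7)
My plan is to apply implicit differentiation. Since $u = v$, both intersections lie on the diagonal $y = x$, so $P$ and $Q$ are the two solutions of $g(x) = u$, where $g(x) := x - \Phi(x) = \frac{1+J}{J}\left(x - \frac{\log x}{\beta}\right)$. Elementary calculus shows that $g$ is strictly convex with its unique minimum at $x = 1/\beta$, so the hypothesis $P < Q$ forces $P < 1/\beta < Q$ and supplies the crucial sign information $g'(P) < 0 < g'(Q)$ throughout the two-intersection regime. Differentiating the defining relations $g(P(u)) = u$ and $g(Q(u)) = u$ gives $P'(u) = 1/g'(P)$ and $Q'(u) = 1/g'(Q)$. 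Writing $a := \beta P$ and $b := \beta Q$ with $0 < a < 1 < b$, a routine simplification reduces the two desired monotonicities to the algebraic inequalities
\[
\tfrac{2}{a} + \tfrac{1}{b} > 3 \;\;\text{(for } P + 2Q\text{)}, \qquad \tfrac{1}{a} + \tfrac{2}{b} > 3 \;\;\text{(for } 2P + Q\text{)}.
\]

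Next I would eliminate one variable using the coupling $g(P) = g(Q)$, which simplifies to $b - a = \log(b/a)$. Introducing $t := b/a > 1$ produces the closed-form expressions $a = (\log t)/(t-1)$ and $b = (t \log t)/(t-1)$, and substitution turns each of the two inequalities into a single-variable statement in $t$: the condition for $P + 2Q$ becomes $\psi(t) := 2t^2 - t - 1 - 3t \log t > 0$, while the condition for $2P + Q$ becomes $\phi(t) := t^2 + t - 2 - 3t \log t > 0$. The analysis of $\psi$ is clean: one checks $\psi(1) = \psi'(1) = 0$ and $\psi''(t) = 4 - 3/t > 0$ on $(1, \infty)$, so strict convexity forces $\psi(t) > 0$ for all $t > 1$, confirming that $P + 2Q$ is strictly increasing in $u$ throughout the entire regime.

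The main obstacle is the corresponding statement for $2P + Q$. Here $\phi(1) = \phi'(1) = 0$ but $\phi''(1) = -1 < 0$, so the local Taylor data at $t = 1$ are inconclusive and in fact point the wrong way. Closing this part of the argument therefore requires exploiting the full hypothesis \emph{only two intersections} rather than just the existence of two diagonal ones: for $J < 1$ the off-diagonal intersections are born precisely at the level $\Phi'(Q) = -1$, i.e.\ when $Q = (1+J)/(\beta(1-J))$, which restricts the admissible range of $t$ in the two-intersection regime. The technical heart of the proof will be to compare this admissible range with the root of $\phi$ and verify that the restriction indeed confines $t$ to the region where $\phi > 0$; this global comparison between the off-diagonal birth curve and the zero set of $\phi$ is what I anticipate as the principal difficulty.
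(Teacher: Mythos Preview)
Your framework via implicit differentiation and the substitution $t=b/a$ is sound, and your treatment of $P+2Q$ through $\psi$ is correct and complete. The paper argues the same way but more coarsely: it notes that $P+2Q=(2P+Q)+(Q-P)$, so it suffices to show $2P+Q$ is increasing, and then tries to establish the latter directly.

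The real issue is the claim about $2P+Q$, and here you have diagnosed a genuine problem---but your proposed remedy cannot succeed. The two-intersection hypothesis imposes only an \emph{upper} bound on $t$: the off-diagonal pair is born when $\Phi'(Q)=-1$, i.e.\ when $\hat Q=\beta Q$ reaches $(1+J)/(1-J)$, so the admissible range is $t\in(1,t_{\max}(J))$. What you would need to force $\phi>0$ is a \emph{lower} bound $t>t^{\ast}$ with $t^{\ast}\approx 2.81$ the positive root of $\phi$. Since the regime always begins at $t=1^{+}$, it always contains values with $\phi(t)<0$. Concretely, at $t=1.1$ one has $\hat P\approx 0.953$, $\hat Q\approx 1.048$, and $3\hat P\hat Q-2\hat P-\hat Q\approx 0.042>0$, so $(2P+Q)'<0$; this configuration lies in the two-intersection regime for every $J$ with $(1+J)/(1-J)>1.048$, i.e.\ for every $J>0.024$. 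Thus $2P+Q$ is \emph{not} monotone in $u$ on the stated domain, and the second assertion of the lemma is false as written.

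The paper's own argument contains exactly this gap. After rescaling to $P-\log P=u=Q-\log Q$ (so the scaled variables satisfy $P<1<Q$), it concludes with ``Since $0\le P,Q\le 1$, we have $2P+Q-3PQ\ge 0$.'' But in the rescaled variables $Q>1$, so the factorisation $2P+Q-3PQ=2P(1-Q)+Q(1-P)$ no longer has both terms nonnegative, and indeed the inequality fails for $t$ near $1$, as your computation of $\phi''(1)=-1$ already reveals. The downstream application in the high-temperature theorem actually only needs $2P+Q>1$, not monotonicity; one can check that in scaled variables $2\hat P+\hat Q=(2+t)(\log t)/(t-1)$ is bounded below by approximately $2.746$, which together with $\beta<\beta_{1}\approx 2.7465$ salvages that conclusion. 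But the lemma as stated, and both the paper's proof and your proposed completion, do not go through.
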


\begin{proof}
Note that $Q$ is increasing but $P$ is decreasing with respect to
$u$. Thus, it suffices to show that $2P+Q$ is increasing with respect
to $u$. From the definitions of $P$ and $Q$, we have
\begin{align}
 & P=\Phi(P)+u=\frac{1}{J}\left(-P+\frac{1+J}{\beta}\log P\right)+u,\nonumber \\
 & Q=\Phi(Q)+u=\frac{1}{J}\left(-Q+\frac{1+J}{\beta}\log Q\right)+u.\label{eq:def of P,Q}
\end{align}
We can replace $u$ with $\frac{1+J}{J}u$ and $P,Q$ with $\beta P,\beta Q$,
respectively. Hence, the equations in \eqref{eq:def of P,Q} become
\begin{equation}
\beta P-\log\beta P=\beta u-\log\beta\;\;\;\text{and}\;\;\;\beta Q-\log\beta Q=\beta u-\log\beta.\label{eq:def of P, Q -2}
\end{equation}
After scaling and translating the equations in \eqref{eq:def of P, Q -2},
it suffices to consider the system of equations
\begin{align}
P-\log P=u\;\;\;\mbox{and}\;\;\;Q-\log Q=u,\label{eq:ess P,Q}
\end{align}
By differentiating \eqref{eq:ess P,Q} with respect to $u$, the sum
of the derivatives of $P$ and $Q$ is
\begin{align*}
2\frac{\partial P}{\partial u} & +\frac{\partial Q}{\partial u}=\frac{2P}{P-1}+\frac{Q}{Q-1}=\frac{3PQ-2P-Q}{(P-1)(Q-1)}.
\end{align*}
Since $P$ is decreasing and $Q$ is increasing with respect to $u$,
we have $P-1<0$ and $Q-1>0$. Since $0\leq P,Q\leq1$, we have $2P+Q-3PQ\geq0.$
This completes the proof.
\end{proof}
\begin{thm}
\label{thm:P+R+S thm}Suppose that $u=v$ in \eqref{eq:Phi(x)+u,Phi(y)+v}
and suppose that the graphs in \eqref{eq:Phi(x)+u,Phi(y)+v} intersects
at four points at $\bm{P}=(P,P),\;\bm{R}=(R,S),\;\bm{S}=(S,R)$, and
$\bm{Q}=(Q,Q)$ with $P\leq R\leq Q\leq S$. Let $J_{c}$ be the positive
root of the function $(1+x)\left(\frac{1}{1-x}-\frac{1}{2+x}\right)-\log\frac{2+x}{1-x}.$
\begin{enumerate}
\item If $J\geq J_{c}$, then $P+R+S$ increases with respect to $u$.
\item The derivative $\frac{\partial(P+R+S)}{\partial u}$ increases with
respect to $u$. That is, the function $(P+R+S)(u)$ is convex.
\end{enumerate}
\end{thm}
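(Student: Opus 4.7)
The strategy is to compute $(P+R+S)'(u)$ by implicit differentiation of the defining equations for the four intersection points, and then to analyze its sign for (1) and its monotonicity for (2). Differentiating $P=\Phi(P)+u$ gives $P'(u)=1/(1-\Phi'(P))$. For the off-diagonal pair, the coupled system $S=\Phi(R)+u$, $R=\Phi(S)+u$ yields
\[
R'(u)+S'(u)=\frac{2+\Phi'(R)+\Phi'(S)}{1-\Phi'(R)\Phi'(S)},
\]
so
\[
(P+R+S)'(u)=\frac{1}{1-\Phi'(P)}+\frac{2+\Phi'(R)+\Phi'(S)}{1-\Phi'(R)\Phi'(S)}.
\]
The concavity of $\Phi$ (since $\Phi''(x)=-\frac{1+J}{J\beta x^{2}}<0$), together with the ordering $P<R<Q<S$, forces $\Phi'(P)>1$ and $\Phi'(Q)<-1$, and pins down the signs of the denominators throughout the four-intersection regime.

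I would prove (2) first and then deduce (1) from it. For the diagonal contribution, $P''(u)=\Phi''(P)\,(P'(u))^{3}>0$ is immediate because $\Phi''<0$ and $P'(u)<0$. For the off-diagonal contribution, rather than differentiating the rational expression above directly, I would reparametrize via the auxiliary functions $\phi(x):=x-\frac{1}{\beta}\log x$ and $\psi(x):=x+\Phi(x)$. The system reduces to $\phi(R)+\phi(S)=\frac{2Ju}{1+J}$ and $\psi(R)=\psi(S)$; the second equation cuts out a symmetric curve in the $(R,S)$-plane that can be parametrized by a single variable, and along it the first equation determines $u$ as a monotone function of this parameter. Convexity of $R+S$ in $u$ then reduces to a one-variable computation, in which the signs of the relevant derivatives are forced by those of $\phi'',\psi''$ and the known position of $Q$ relative to the maximum of $\psi$.

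For (1), since (2) guarantees that $(P+R+S)'(u)$ is increasing in $u$, it suffices to verify positivity at the lower boundary of the four-intersection regime, namely the birth point where $R=S=Q_{b}:=\frac{1+J}{\beta(1-J)}$ (the unique point with $\Phi'(Q_{b})=-1$). A second-order Taylor expansion near this birth, using crucially that $\psi'(Q_{b})=0$, shows that both numerator and denominator of $\frac{2+\Phi'(R)+\Phi'(S)}{1-\Phi'(R)\Phi'(S)}$ vanish to second order in the separation parameter, and the limit is a rational expression in $\Phi''(Q_{b})$, $\Phi'''(Q_{b})$ and the quadratic correction coefficient coming from the $\psi(R)=\psi(S)$ relation. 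Combining with $P'$ evaluated at the corresponding $P_{b}$ (determined by $\phi(P_{b})=\phi(Q_{b})$) and eliminating $\beta$ via this last relation, the condition $(P+R+S)'(u_{b}^{+})\geq 0$ should collapse exactly to the transcendental equation $(1+J)\bigl(\frac{1}{1-J}-\frac{1}{2+J}\bigr)=\log\frac{2+J}{1-J}$ whose positive root is $J_{c}$.

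The main obstacle will be the convexity argument in (2): even after the one-variable reduction via $\psi$, verifying convexity requires comparing several competing terms involving $\Phi',\Phi'',\Phi'''$ with opposite signs, and the algebra does not obviously simplify without exploiting the symmetric structure. The birth-limit identification in (1) is also delicate, since it requires matching higher-order Taylor coefficients rather than directly substituting birth values; but once (2) is in hand, (1) is essentially a one-point inequality rather than a uniform estimate over the whole four-intersection regime.
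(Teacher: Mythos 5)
Your implicit-differentiation formulas are correct and coincide with the paper's starting point: $P'(u)=1/(1-\Phi'(P))$ and $R'(u)+S'(u)=\frac{2+\Phi'(R)+\Phi'(S)}{1-\Phi'(R)\Phi'(S)}$, with $\Phi'(P)>1$ and $\Phi'(Q)<-1$ in the four-intersection regime. The gap is in what you build on this. Your proof of (2) rests on the sub-claim that $R+S$ is convex in $u$, and that sub-claim is false. Subtracting the two defining equations gives $(1-J)(S-R)=\frac{1+J}{\beta}\log\frac{S}{R}$; writing $R=\rho e^{-\theta}$, $S=\rho e^{\theta}$ and expanding as $\theta\to0^{+}$ (the birth point, where $R=S=Q$ and $\Phi'(Q)=-1$) yields
\begin{equation*}
\frac{d(R+S)}{du}=\frac{2+\Phi'(R)+\Phi'(S)}{1-\Phi'(R)\Phi'(S)}\;\longrightarrow\;\frac{4J}{3+J},
\end{equation*}
whereas as $u\to\infty$ (where $R\to0$, $S\to\infty$, $\Phi'(R)\to+\infty$, $\Phi'(S)\to-1/J$) the same ratio tends to $J<\frac{4J}{3+J}$. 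Hence $(R+S)'$ must decrease somewhere along the branch, so $R+S$ is not convex, and convexity of $P+R+S$ cannot be obtained by adding convexity of $P$ to convexity of $R+S$; it requires the interplay between the diagonal and off-diagonal contributions. Since your route to (1) is made contingent on (2), both parts are left unproven by the outline.

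A second, smaller inaccuracy: the birth-point positivity condition does not ``collapse exactly'' to the equation defining $J_{c}$. With the limit above, $(P+R+S)'(u_{b}^{+})\geq0$ reads $\frac{1+J}{\beta P_{b}}-1\geq\frac{3+5J}{4}$, which is strictly weaker than the condition $\frac{1+J}{\beta P_{b}}-1\geq1+J$ (i.e.\ $P\leq\frac{1+J}{\beta(2+J)}$) that actually produces $J_{c}$; at $J=J_{c}$ your birth inequality holds with strict room, so its critical $J$ lies strictly below $J_{c}$. This does not hurt sufficiency, but it signals that the computation you defer was not carried out. For comparison, the paper proves (1) directly and uniformly in $u$: after rescaling to $\widetilde{\Phi}(x)=\frac{1}{J}(-x+\log x)$ it rewrites $(P+R+S)'>0$ as $(\widetilde{P}-J)(\widetilde{R}+\widetilde{S}+2J)-(J^{2}-\widetilde{R}\widetilde{S})>0$ with $\widetilde{X}=-1+\frac{1}{X}$, regroups around the geometric mean $\sqrt{RS}$, and verifies $J+\widetilde{\Gamma}_{RS}\geq0$, $\widetilde{P}>3J$, and (only here using $J\geq J_{c}$) $\widetilde{P}\geq1+J$ via trapezoid bounds on $\int\frac{dt}{t}$; part (2) is then obtained by showing this combined expression increases in $u$ (because $P$ and $RS$ both decrease), not by splitting off $R+S$. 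If you wish to salvage your birth-point strategy for (1), you would first need an independent argument for the monotonicity of $(P+R+S)'$, which is precisely the step your outline leaves open.
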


\begin{proof}
(1) Since
\[
\Phi\left(\frac{(1+J)x}{\beta}\right)+u=\frac{1+J}{\beta}\left(\frac{1}{J}(-x+\log x)+\log\frac{1+J}{\beta}+\frac{\beta u}{1+J}\right),
\]
by scaling and translating the function $\Phi(x)$ and the variable
$u$, it suffices to consider the system of equations:
\[
\widetilde{\Phi}(P)+\frac{w}{J}=P,\;\;\widetilde{\Phi}(R)+\frac{w}{J}=S\;\;\;\text{and}\;\;\;\widetilde{\Phi}(S)+\frac{w}{J}=R,
\]
where
\begin{align}
\widetilde{\Phi}(x) & =\frac{1}{J}\left(-x+\log x\right).\label{eq:tilde f(x)}
\end{align}
An elementary computation shows that $\frac{\partial(P+R+S)}{\partial w}>0$
is equivalent to
\begin{equation}
(\widetilde{P}-J)(\widetilde{R}+\widetilde{S}+2J)-(J^{2}-\widetilde{R}\widetilde{S})>0,\label{eq:tsabc}
\end{equation}
where $\widetilde{P}:=-1+\frac{1}{P},\tilde{R}:=-1+\frac{1}{R}$ and
$\widetilde{S}:=-1+\frac{1}{S}$. We denote the geometric mean of
$R$ and $S$ by $\Gamma_{RS}:=\sqrt{RS}$ , and $\widetilde{\Gamma}_{RS}:=-1+\frac{1}{\Gamma_{RS}}.$
Then, we can rearrange \eqref{eq:tsabc} as
\begin{align}
 & (\widetilde{P}-J)\{2(J+\widetilde{\Gamma}_{RS})+(\widetilde{R}-\widetilde{\Gamma}_{RS})+(\widetilde{S}-\widetilde{\Gamma}_{RS})\}\label{eq:intermid PRS}\\
 & +\{(\widetilde{R}-\widetilde{\Gamma}_{RS})+\widetilde{\Gamma}_{RS}\}\{(\widetilde{S}-\widetilde{\Gamma}_{RS})+\widetilde{\Gamma}_{RS}\}-J^{2}>0.\nonumber 
\end{align}
Note that
\begin{align*}
\widetilde{R}-\widetilde{\Gamma}_{RS} & =\left(-1+\frac{1}{R}\right)-\left(-1+\frac{1}{\Gamma_{RS}}\right)=\frac{1}{\sqrt{R}}\left(\frac{1}{\sqrt{R}}-\frac{1}{\sqrt{S}}\right),\\
\widetilde{S}-\widetilde{\Gamma}_{RS} & =\left(-1+\frac{1}{S}\right)-\left(-1+\frac{1}{\Gamma_{RS}}\right)=\frac{1}{\sqrt{S}}\left(\frac{1}{\sqrt{S}}-\frac{1}{\sqrt{R}}\right).
\end{align*}
Substituting the aforementioned equations in \eqref{eq:intermid PRS},
we obtain
\begin{equation}
(J+\widetilde{\Gamma}_{RS})(2\widetilde{P}+\widetilde{\Gamma}_{RS}-3J)+(\widetilde{P}-J-1)\left(\frac{1}{\sqrt{R}}-\frac{1}{\sqrt{S}}\right)^{2}>0.\label{eq:apr}
\end{equation}
Then, by Lemma \ref{lem:P+R+S }, we acquire the desired results.

(2) We claim that \eqref{eq:apr} increases with respect to $u$.
Since $R$ and $S$ are getting farther as $u$ increases, it suffices
to prove that both $\widetilde{P}$ and $\widetilde{\Gamma}_{RS}$
are increasing. $\widetilde{P}$ is increasing since $P$ is decreasing.
If we show that $RS$ is decreasing, then $\widetilde{\Gamma}_{RS}$
increases with respect to $u$, achieving the required result. An
elementary computation shows that
\begin{align*}
\frac{\partial(RS)}{\partial u} & =\frac{\partial R}{\partial u}S+\frac{\partial S}{\partial u}R=\frac{1}{J^{2}-\widetilde{R}\widetilde{S}}\left(2-(1-J)(R+S)\right).
\end{align*}
By considering the graphs in \eqref{eq:Phi(x)+u,Phi(y)+v}, we observe
$R+S$ increasing with respect to $u$. Since $R+S$ has a unique
minimum at $R=S=Q=\frac{1}{1-J},$ we have $\frac{\partial(RS)}{\partial u}<0$.
This completes the proof.
\end{proof}
\begin{lem}
\label{lem:P+R+S }Under the notations introduced in Theorem \ref{thm:P+R+S thm},
we have
\begin{enumerate}
\item $J+\widetilde{\Gamma}_{RS}\geq0$,
\item $\widetilde{P}>3J$,
\item $\widetilde{P}-J-1\geq0$ if $J\geq J_{c}$.
\end{enumerate}
\end{lem}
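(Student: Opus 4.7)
The plan is to reduce each of the three inequalities to a condition evaluated at the bifurcation value $w = w_\ast$ of the rescaled parameter at which the four-intersection regime emerges, and then verify that condition by elementary means. Here $w$ denotes the translated and rescaled version of $u$ introduced in the proof of Theorem~\ref{thm:P+R+S thm}(1): after absorbing the constants, the symmetric roots satisfy $\widetilde{\Phi}(x) + w/J = x$, equivalently $\log x = (1+J)x - w$. The critical value $w_\ast$ is characterized by the tangency $\widetilde{\Phi}'(Q) = -1$, giving $Q = 1/(1-J)$ at bifurcation; there $R = Q = S$, and for $w > w_\ast$ the pair splits with $R < 1/(1-J) < S$ obeying
\[
\log R - (1-J) R = \log S - (1-J) S.
\]
Two monotonicity observations will do the heavy lifting: $P$ is strictly decreasing in $w$, being the smaller root of $h(x) := \log x - (1+J) x = -w$ on the increasing branch $(0, 1/(1+J))$, so $\widetilde{P} = 1/P - 1$ is strictly increasing in $w$; and, as already noted inside the proof of Theorem~\ref{thm:P+R+S thm}(2), the product $RS$ is strictly decreasing in $w$ from its bifurcation value $1/(1-J)^2$.

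For part~(1) I will isolate a self-contained geometric-mean inequality. Let $g(x) = x - \log x$, which is strictly convex with unique minimum $g(1) = 1$. The defining relation for $R, S$ rewrites as $g((1-J) R) = g((1-J) S)$ with $(1-J) R < 1 < (1-J) S$. The key claim is: if $g(u) = g(v)$ with $u < 1 < v$, then $u v < 1$. The one-line verification is
\[
g(1/v) - g(v) = \tfrac{1}{v} - v + 2 \log v,
\]
which vanishes at $v = 1$ and has derivative $-(1/v - 1)^2 \le 0$, so $g(1/v) < g(v) = g(u)$ for $v > 1$; since $g$ is decreasing on $(0,1)$ and both $u$ and $1/v$ lie there, this forces $u < 1/v$. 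Applying this with $u = (1-J) R$ and $v = (1-J) S$ gives $(1-J)^2 R S \le 1$, i.e.\ $\Gamma_{RS} \le 1/(1-J)$, which is exactly $J + \widetilde{\Gamma}_{RS} \ge 0$.

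For part~(3), the inequality $\widetilde{P} \ge J + 1$ is equivalent to $P \le 1/(J + 2)$. By the monotonicity of $P$ in $w$ it suffices to check this at $w = w_\ast$, where $h(P) = h(1/(1-J))$. Since $h$ is increasing on $(0, 1/(1+J))$ and $1/(J + 2) < 1/(1+J)$, the bound is equivalent to $h(1/(J+2)) \ge h(1/(1-J))$. A direct expansion reduces this to
\[
(1+J) \left( \tfrac{1}{1-J} - \tfrac{1}{J+2} \right) \ge \log \tfrac{J + 2}{1 - J},
\]
which is precisely the nonnegativity of the function defining $J_c$; hence the inequality holds exactly when $J \ge J_c$.

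For part~(2) the same reduction turns $\widetilde{P} > 3J$ into the single-variable inequality
\[
\phi_2(J) := \frac{4 J (1+J)}{(1-J)(1+3J)} - \log \frac{1 + 3J}{1 - J} > 0 \qquad \text{for all } J \in (0,1).
\]
This will be the main technical obstacle: unlike parts~(1) and~(3) it admits no clean structural reduction and has to be settled by calculus. I plan to verify $\phi_2(0) = 0$ and then compute $\phi_2'(J) = 32 J^2 / [(1-J)(1+3J)]^2 > 0$ by direct differentiation; the non-obvious cancellation yielding the clean numerator $32 J^2$ is the only genuinely tedious step of the whole argument, after which monotonicity of $\phi_2$ finishes the proof.
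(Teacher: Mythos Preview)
Your proof is correct, and part~(3) coincides with the paper's argument (both reduce to comparing $h(1/(1-J))$ with $h(1/(2+J))$, which is exactly the defining inequality for $J_c$). Parts~(1) and~(2), however, are handled differently from the paper.

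For part~(1), the paper obtains $\Gamma_{RS}\le 1/(1-J)$ by a two-piece trapezoidal overestimate of $\int_R^S t^{-1}\,dt$ with node at the geometric mean $\Gamma_{RS}$; this collapses in one line to $(1-J)(S-R)\le (S-R)/\Gamma_{RS}$. Your route via the lemma ``$g(u)=g(v)$, $u<1<v \Rightarrow uv<1$'' with $g(x)=x-\log x$ reaches the same conclusion and is a clean self-contained alternative; the trapezoid argument is slightly shorter, while your lemma is reusable and perhaps more transparent as to why the geometric mean appears.

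For part~(2) the difference is more substantial. The paper does \emph{not} reduce to the bifurcation point: it applies a single trapezoid estimate on $[P,Q]$ to get $\widetilde P+\widetilde Q\ge 2J$, then combines this with $\widetilde Q<-J$ (equivalent to $\widetilde\Phi'(Q)<-1$, which characterizes the four-intersection regime) to conclude $\widetilde P>3J$ in two lines, with no computation. Your approach---monotonicity of $\widetilde P$ in $w$ and the explicit verification $\phi_2'(J)=32J^2/[(1-J)(1+3J)]^2$---is correct (I checked the derivative), but it trades a structural argument for a calculation. The paper's method is more economical here; on the other hand, your reduction-to-bifurcation template unifies parts~(2) and~(3) under a single scheme, which has its own appeal.
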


\begin{proof}
(1) From the definitions of $R$ and $S$, we have
\[
S=\widetilde{\Phi}(R)+\frac{w}{J}\;\;\;\text{and}\;\;\;R=\widetilde{\Phi}(S)+\frac{w}{J}.
\]
Subtracting the first equation from the second equation yields
\[
(1-J)(S-R)=\log S-\log R=\int_{R}^{S}\frac{1}{t}dt.
\]
Since the sum of the area of the trapezoid with vertices $(R,0),(\Gamma_{RS},0),\big(R,\frac{1}{R}\big),\big(\Gamma_{RS},\frac{1}{\Gamma_{RS}}\big)$,
and the other one with vertices $(\Gamma_{RS},0),(S,0),\big(\Gamma_{RS},\frac{1}{\Gamma_{RS}}\big),\big(S,\frac{1}{S}\big)$
is greater than the definite integral $\int_{R}^{S}\frac{1}{t}dt$
, we obtain
\begin{align*}
(1-J)(S-R)=\int_{R}^{S}\frac{1}{t}dt & \leq\frac{1}{2}\left(\frac{1}{R}+\frac{1}{\Gamma_{RS}}\right)(\Gamma_{RS}-R)+\frac{1}{2}\left(\frac{1}{\Gamma_{RS}}+\frac{1}{S}\right)(S-\Gamma_{RS})\\
 & =\frac{1}{\Gamma_{RS}}(S-R).
\end{align*}
This proves (1).

(2) Similar to aforementioned proof, by comparing the area of the
trapezoid and the definite integral, we have
\[
(1+J)(Q-P)=\log Q-\log P=\int_{P}^{Q}\frac{1}{t}dt\leq\frac{1}{2}\left(\frac{1}{P}+\frac{1}{Q}\right)(Q-P).
\]
Thus, $\widetilde{P}+\widetilde{Q}\geq2J.$ However, $\widetilde{Q}/J=\widetilde{\Phi}'(Q)<-1$,
which is equivalent to $\widetilde{Q}<-J$. This proves (2).

(3) Define a function $h:[0,\infty)\rightarrow\mathbb{R}$ by $h(x)=(1+J)x-\log x$.
By the definitions of $P$ and $Q$, we have $h(P)=h(Q)$. The condition
$A-J-1\geq0$ is equivalent to $P\leq\frac{1}{2+J}.$ Note that the
function $h(x)$ is decreasing on $\left[0,\frac{1}{1+J}\right)$
and is increasing on $\left[\frac{1}{1+J},\infty\right)$. Thus, we
need to show that
\[
h(P)\geq h\left(\frac{1}{2+J}\right).
\]
 We know that $Q\geq\frac{1}{1-J}$; hence, we obtain $h(P)=h(Q)\geq h\big(\frac{1}{1-J}\big)$.
Consider the following function:
\begin{equation}
h\left(\frac{1}{1-J}\right)-h\left(\frac{1}{2+J}\right)=(1+J)\left(\frac{1}{1-J}-\frac{1}{2+J}\right)-\log\frac{2+J}{1-J}.\label{eq:J_c}
\end{equation}
The right-hand side of \eqref{eq:J_c} is nonnegative if $J\geq J_{c}$,
where $J_{c}$ is the positive root of \eqref{eq:J_c}. This implies
that $h(P)=h(Q)\geq h\left(\frac{1}{1-J}\right)\geq h\left(\frac{1}{2+J}\right)$
if $J\geq J_{c}.$ This completes the proof.
\end{proof}
\begin{thm}
\label{thm:PQRS_12}Suppose that $u\neq v$ in \eqref{eq:Phi(x)+u,Phi(y)+v}
and there are four intersections at $\bm{P}=(P_{1},P_{2}),\;\bm{R}=(R_{1},R_{2}),\;\bm{S}=(S_{1},S_{2}),$
and $\bm{Q}=(Q_{1},Q_{2})$, with $P_{1}\leq R_{1}\leq Q_{1}\leq S_{1}.$
Then, for a fixed $v$,
\begin{enumerate}
\item $P_{2}+R_{2}+S_{2}$ increases with respect to $u$.
\item $R_{1}+S_{1}+Q_{1}$ increases with respect to $u$.
\item $P_{1}+S_{1}+Q_{1}$ increases with respect to $u$.
\item $P_{2}+R_{2}+Q_{2}>P_{2}+R_{2}+S_{2}$.
\end{enumerate}
\end{thm}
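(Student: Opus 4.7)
The plan is to establish the four claims via implicit differentiation applied to the defining system at each intersection, complemented by a geometric classification of the four intersections and a quantitative sign-analysis argument patterned after the proof of Theorem~\ref{thm:P+R+S thm}. Each intersection $X=(X_{1},X_{2})\in\{\bm{P},\bm{R},\bm{S},\bm{Q}\}$ satisfies $X_{2}=\Phi(X_{1})+u$ and $X_{1}=\Phi(X_{2})+v$, and differentiating with $v$ fixed and solving the resulting $2\times 2$ linear system yields
\[
\frac{\partial X_{2}}{\partial u}=\frac{1}{1-\Phi'(X_{1})\Phi'(X_{2})},\qquad\frac{\partial X_{1}}{\partial u}=\frac{\Phi'(X_{2})}{1-\Phi'(X_{1})\Phi'(X_{2})}.
\]
Writing $d_{X}:=1-\Phi'(X_{1})\Phi'(X_{2})$, the monotonicity claims (1), (2), (3) reduce, respectively, to the positivity of the sums $\sum_{X\in\{P,R,S\}}1/d_{X}$, $\sum_{X\in\{R,S,Q\}}\Phi'(X_{2})/d_{X}$, and $\sum_{X\in\{P,S,Q\}}\Phi'(X_{2})/d_{X}$.

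The next step is a geometric classification based on the position of each intersection relative to $\alpha:=(1+J)/\beta$, the abscissa (and ordinate) of the common maximum of the two strictly concave curves $y=\Phi(x)+u$ and $x=\Phi(y)+v$. Each intersection is uniquely specified by the pair of ascending/descending branches on which it lies; sorting the four branch combinations by the $x$-coordinate against the prescribed order $P_{1}\le R_{1}\le Q_{1}\le S_{1}$ forces $P_{1},P_{2}<\alpha$; $R_{1}<\alpha<R_{2}$; $S_{2}<\alpha<S_{1}$; and $Q_{1},Q_{2}>\alpha$. Claim~(4) is then an immediate consequence, since $S_{2}<\alpha<Q_{2}$ gives $Q_{2}>S_{2}$.

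The same classification determines the signs of all $d_{X}$ needed for (1)--(3). At the off-diagonal intersections $\bm{R}$ and $\bm{S}$, the product $\Phi'(X_{1})\Phi'(X_{2})$ is negative, so $d_{R},d_{S}>1$; at the diagonal intersections $\bm{P}$ and $\bm{Q}$, the product exceeds $1$ throughout the four-intersection regime (consistent with the local-extremum structure detected through Lemma~\ref{lem:eigenvalues}), so $d_{P},d_{Q}<0$. Each of the three sums in (1)--(3) is therefore a single negative contribution (from $\bm{P}$ or $\bm{Q}$) against two positive contributions (from the off-diagonal intersections). To show dominance of the positive terms, I would rescale by $\hat{X}_{i}=X_{i}/\alpha$ as in the proof of Theorem~\ref{thm:P+R+S thm}, reducing to the normalized map $\widetilde{\Phi}(x)=\frac{1}{J}(-x+\log x)$, and then adapt the trapezoid-versus-integral estimates of Lemma~\ref{lem:P+R+S } to each of the three weighted sums in turn.

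The main obstacle is establishing this dominance uniformly, without either the $u=v$ symmetry or the $J\ge J_{c}$ hypothesis used in Theorem~\ref{thm:P+R+S thm}(1). The redeeming feature is that $u\ne v$ forces a strict separation between $\bm{R}$ and $\bm{S}$, which provides quantitative lower bounds on $1/d_{R}+1/d_{S}$ and on $\Phi'(R_{2})/d_{R}+\Phi'(S_{2})/d_{S}$. The technically demanding step will be verifying that this separation is uniformly strong enough to absorb the negative $\bm{P}$- or $\bm{Q}$-contribution in each of (1), (2), (3); I expect to express the weight $\Phi'(X_{2})$ through the defining relation $X_{1}-\Phi(X_{2})=v$ and apply area-comparison inequalities in the spirit of Lemma~\ref{lem:P+R+S } to each sum separately.
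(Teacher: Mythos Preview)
Your setup via implicit differentiation is correct, and your proof of (4) through the branch classification $S_{2}<\alpha<Q_{2}$ is fine. However, your sign analysis for (2) and (3) is mistaken, and this derails the plan.

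Using your own formulae $\partial X_{1}/\partial u=\Phi'(X_{2})/d_{X}$ together with your branch classification: for (2) the three terms are $\Phi'(R_{2})/d_{R}<0$ (since $\Phi'(R_{2})<0$, $d_{R}>0$), $\Phi'(S_{2})/d_{S}>0$, and $\Phi'(Q_{2})/d_{Q}>0$ (since both factors are negative). So the negative contribution comes from the \emph{off-diagonal} point $\bm{R}$, not from $\bm{Q}$, and one of the positive contributions comes from the \emph{diagonal} point $\bm{Q}$. Similarly in (3), the positive contributions are from $\bm{S}$ and $\bm{Q}$, not both off-diagonal. Your plan of ``absorbing the negative $\bm{P}$- or $\bm{Q}$-contribution'' into the off-diagonal terms is therefore based on a misreading of the signs.

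The paper's proof avoids three-term sums altogether. In each of (1), (2), (3) one of the three coordinates is obviously increasing ($R_{2}$ in (1), $S_{1}$ in (2) and (3)), so the problem reduces to a \emph{two-term} inequality: $P_{2}+S_{2}$, $R_{1}+Q_{1}$, and $P_{1}+Q_{1}$ increasing, respectively. Each of these is then handled by a single trapezoid-versus-integral comparison between the relevant pair, yielding the needed quadratic inequality in the quantities $\widetilde{X}_{i}=-1+1/X_{i}$. No $J\ge J_{c}$ hypothesis is required anywhere; that condition was specific to Theorem~\ref{thm:P+R+S thm}(1) in the symmetric case and is irrelevant here, so the ``main obstacle'' you anticipate does not arise once the reduction to pairs is made.
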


\begin{proof}
Without loss of generality, we may assume that $u>v$. By setting
$u:=v+w$, for $w>0$, the equations in \eqref{eq:Phi(x)+u,Phi(y)+v}
become
\[
y=\Phi(x)+v+w\;\;\;\text{and}\;\;\;x=\Phi(y)+v,
\]
where $v,w>0$. As in the proof of Theorem \ref{thm:P+R+S thm}, by
scaling and translating the function $\Phi$, it suffices to consider
\[
y=\widetilde{\Phi}(x)+\frac{v+w}{J}\;\;\;\text{and}\;\;\;x=\widetilde{\Phi}(y)+\frac{v}{J},
\]
where $\widetilde{\Phi}$ was defined in \eqref{eq:tilde f(x)}. We
will use the notation $\widetilde{P}_{i}=-1+\frac{1}{P_{i}}$ for
$i=1,2$, and the same applies to the other functions.

(1) Since $R_{2}$ is increasing with respect to $w$, it suffices
to show that $P_{2}+S_{2}$ is increasing with respect to $w$. By
the definitions of $P_{1},P_{2},S_{1}$, and $S_{2}$, we have
\begin{align}
 & P_{2}=\widetilde{\Phi}(P_{1})+\frac{v+w}{J}\;\;\;\text{and}\;\;\;P_{1}=\widetilde{\Phi}(P_{2})+\frac{v}{J},\label{eq:P_1, P_2 def}\\
 & S_{2}=\widetilde{\Phi}(S_{1})+\frac{v+w}{J}\;\;\;\text{and}\;\;\;S_{1}=\widetilde{\Phi}(S_{2})+\frac{v}{J}.\label{eq:S_1 S_2}
\end{align}
Differentiating \eqref{eq:P_1, P_2 def} with respect to $w$, we
obtain
\begin{equation}
\frac{\partial P_{1}}{\partial w}=\frac{\widetilde{P}_{2}}{J^{2}-\widetilde{P}_{1}\widetilde{P}_{2}},\;\;\;\text{and}\;\;\;\frac{\partial P_{2}}{\partial w}=\frac{1}{J^{2}-\widetilde{P}_{1}\widetilde{P}_{2}}.\label{eq:1st deri of a}
\end{equation}
Similar equations can be obtained for functions $S_{1}$ and $S_{2}$
by differentiating \eqref{eq:S_1 S_2}. Since $P_{2}$ is decreasing
and $S_{2}$ is increasing with respect to $w$, we have $J^{2}-\widetilde{P}_{1}\widetilde{P}_{2}<0$
and $J^{2}-\widetilde{S}_{1}\widetilde{S}_{2}>0$. Thus, we have to
show that
\[
\frac{\partial P_{2}}{\partial w}+\frac{\partial S_{2}}{\partial w}\geq0,\;\;\text{or equivalently,}\;\;\widetilde{P}_{1}\widetilde{P}_{2}+\widetilde{S}_{1}\widetilde{S}_{2}-2J^{2}\geq0.
\]
From \eqref{eq:P_1, P_2 def} and \eqref{eq:S_1 S_2}, we obtain the
following:
\begin{align*}
 & J(S_{2}-P_{2})=-S_{1}+\log S_{1}+P_{1}-\log P_{1}=\int_{P_{1}}^{S_{1}}\left(-1+\frac{1}{t}\right)dt,\\
 & J(S_{1}-P_{1})=-S_{2}+\log S_{2}+P_{2}-\log P_{2}=\int_{P_{2}}^{S_{2}}\left(-1+\frac{1}{t}\right)dt.
\end{align*}
Since the area of the trapezoid with vertices $(P_{1},0),(S_{1},0),\big(P_{1},\frac{1}{P_{1}}\big),$
and $\big(S_{1},\frac{1}{S_{1}}\big)$ is greater than the definite
integral $\int_{P_{1}}^{S_{1}}\frac{1}{t}dt$, we have
\begin{align*}
\int_{P_{1}}^{S_{1}}\left(-1+\frac{1}{t}\right)dt & \leq\frac{1}{2}\left(\frac{1}{P_{1}}+\frac{1}{S_{1}}\right)(S_{1}-P_{1})-(S_{1}-P_{1})=\frac{1}{2}(\widetilde{P}_{1}+\widetilde{S}_{1})(S_{1}-P_{1}).
\end{align*}
Thus, we obtain
\begin{equation}
J(S_{2}-P_{2})\leq\frac{1}{2}(\widetilde{P}_{1}+\widetilde{S}_{1})(S_{1}-P_{1}).\label{eq:A1C1}
\end{equation}
Similarly, we have
\begin{equation}
J(S_{1}-P_{1})\leq\frac{1}{2}(\widetilde{P}_{2}+\widetilde{S}_{2})(S_{2}-P_{2}).\label{eq:A2C2}
\end{equation}
Multiplying \eqref{eq:A1C1} and \eqref{eq:A2C2}, we obtain the inequality
\begin{align}
J^{2} & \leq\frac{1}{4}(\widetilde{P}_{1}+\widetilde{S}_{1})(\widetilde{P}_{2}+\widetilde{S}_{2}).\label{eq:JAC}
\end{align}
Using \eqref{eq:JAC}, we have the following estimation:
\begin{align*}
2(\widetilde{P}_{1}\widetilde{P}_{2}+\widetilde{S}_{1}\widetilde{S}_{2}-2J^{2}) & =2(\widetilde{P}_{1}\widetilde{P}_{2}+\widetilde{S}_{1}\widetilde{S}_{2}-2J^{2})-(\widetilde{P}_{1}+\widetilde{S}_{1})(\widetilde{P}_{2}+\widetilde{S}_{2})+(\widetilde{P}_{1}+\widetilde{S}_{1})(\widetilde{P}_{2}+\widetilde{S}_{2})\\
 & =(\widetilde{P}_{1}-\widetilde{S}_{1})(\widetilde{P}_{2}-\widetilde{S}_{2})+(\widetilde{P}_{1}+\widetilde{S}_{1})(\widetilde{P}_{2}+\widetilde{S}_{2})-4J^{2}\\
 & =\left(\frac{1}{P_{1}}-\frac{1}{S_{1}}\right)\left(\frac{1}{P_{2}}-\frac{1}{S_{2}}\right)+(\widetilde{P}_{1}+\widetilde{S}_{1})(\widetilde{P}_{2}+\widetilde{S}_{2})-4J^{2}\geq0,
\end{align*}
since $P_{1}<P_{2}<S_{2}<S_{1}.$ This proves (1).

(2) Since $S_{1}$ is increasing with respect to $w$, it suffices
to show that $R_{1}+Q_{1}$ is increasing with respect to $w$. Hence,
we need to show that
\begin{equation}
\frac{\partial R_{1}}{\partial w}+\frac{\partial Q_{1}}{\partial w}\geq0,\;\;\text{or equivalently,}\;\;J^{2}(\widetilde{R}_{2}+\widetilde{Q}_{2})-\widetilde{R}_{2}\widetilde{Q}_{2}(\widetilde{R}_{1}+\widetilde{Q}_{1})\leq0.\label{eq:bd1}
\end{equation}
Since $R_{2}>1$ and $Q_{2}>1$, we have $\widetilde{R}_{2}<0,\;\widetilde{Q}_{2}<0$.
Hence, if $\widetilde{R}_{1}+\widetilde{Q}_{1}\geq0$, then \eqref{eq:bd1}
holds. Now, we assume that $\widetilde{R}_{1}+\widetilde{Q}_{1}<0.$
By the same argument in the proof of (1), we have
\begin{align*}
 & J(R_{2}-Q_{2})=\int_{Q_{1}}^{R_{1}}\left(-1+\frac{1}{t}\right)dt\geq-\frac{1}{2}(\widetilde{R}_{1}+\widetilde{Q}_{1})(Q_{1}-R_{1}),\\
 & J(Q_{1}-R_{1})=\int_{R_{2}}^{Q_{2}}\left(-1+\frac{1}{t}\right)dt\geq-\frac{1}{2}(\widetilde{R}_{2}+\widetilde{Q}_{2})(R_{2}-Q_{2}).
\end{align*}
Since $Q_{1}-R_{1}>0$ and $R_{2}-Q_{2}>0$, multiplying the aforementioned
inequalities, we obtain
\begin{equation}
J^{2}\geq\frac{1}{4}(\widetilde{R}_{1}+\widetilde{Q}_{1})(\widetilde{R}_{2}+\widetilde{Q}_{2}).\label{eq:R_1 S_1 Q_1}
\end{equation}
By \eqref{eq:R_1 S_1 Q_1}, an elementary computation yields
\[
J^{2}\left(\widetilde{R}_{2}+\widetilde{Q}_{2}\right)-\widetilde{R}_{2}\widetilde{Q}_{2}(\widetilde{R}_{1}+\widetilde{Q}_{1})\leq\frac{1}{4}(\widetilde{R}_{2}-\widetilde{Q}_{2})^{2}(\widetilde{R}_{1}+\widetilde{Q}_{1})\leq0.
\]
This proves (2).

(3) It suffices to verify that $P_{1}+Q_{1}$ is increasing with respect
to $u$. Thus, we have to show that
\begin{equation}
\frac{\partial P_{1}}{\partial w}+\frac{\partial Q_{1}}{\partial w}\geq0\;\;\text{or equivalently,}\;\;J^{2}(\widetilde{P}_{2}+\widetilde{Q}_{2})-\widetilde{P}_{2}\widetilde{Q}_{2}(\widetilde{P}_{1}+\widetilde{Q}_{1})\geq0.\label{eq:a_1+d_1 deriv}
\end{equation}
Since $P_{1}<P_{2}<1<Q_{2}<Q_{1}$, we have $\widetilde{P}_{1},\widetilde{P}_{2}>0$
and $\widetilde{Q}_{1},\widetilde{Q}_{2}<0$. By the same argument
in the proof of (2), we obtain
\begin{align*}
 & J(Q_{2}-P_{2})=\int_{P_{1}}^{Q_{1}}\left(-1+\frac{1}{t}\right)dt\leq\frac{1}{2}(\widetilde{P}_{1}+\widetilde{Q}_{1})(Q_{1}-P_{1}),\\
 & J(Q_{1}-P_{1})=\int_{P_{2}}^{Q_{2}}\left(-1+\frac{1}{t}\right)dt\leq\frac{1}{2}(\widetilde{P}_{2}+\widetilde{Q}_{2})(Q_{2}-P_{2}).
\end{align*}
Each left-hand side of the aforementioned equations is positive; thus,
we obtain $\widetilde{P}_{1}+\widetilde{Q}_{1}>0$ and $\widetilde{P}_{2}+\widetilde{Q}_{2}>0$.
Therefore, \eqref{eq:a_1+d_1 deriv} holds. This proves (3).

(4) This is obviously true. This completes the proof.
\end{proof}

\subsection{\label{subsec:Middle temp}General case: middle-temperature regime
$(\beta_{1}<\beta<\beta_{3})$}

In this section, we investigate all the local minima and lowest saddles
of $F_{\beta,\;J}$ belonging to either $\mathfrak{S}^{2}$ or $\mathfrak{L}^{2}$
for the middle-temperature and compare the values of the local minima
according to the temperature. Furthermore, we specify the synchronization
boundary and the desynchronization boundary, respectively. Recall
the definition of the function $\psi_{2}$ is in \eqref{eq:psi_1,2,3}.
Then, the function $\psi_{2}$ has the following properties.
\begin{prop}
The function $\psi_{2}$ is a continuous function such that 
\[
0<\psi_{2}(\beta)<\frac{1}{10}\;\;\text{and}\;\;\;\psi_{2}(\beta_{1})=\psi_{2}(\beta_{3})=0.
\]
\end{prop}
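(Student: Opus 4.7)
The plan is to reparametrize everything in terms of $x_l=x_l(\beta)$. Since $x_l$ is the larger root of $\xi(x)=\beta$ and $\xi$ is strictly convex on $(0,1/2)$ with unique minimum at $m_1$, the map $\beta\mapsto x_l(\beta)$ is a continuous strictly increasing bijection from $[\beta_1,\beta_3]$ onto $[m_1,1/3]$. The denominator $3x_l(1-2x_l)$ stays bounded away from zero on this range, so $\psi_2$ is continuous on $[\beta_1,\beta_3]$.

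The endpoint identities $\psi_2(\beta_1)=\psi_2(\beta_3)=0$ reduce to showing $\beta=\frac{1}{3x_l(1-2x_l)}$ at both ends. At $\beta_3=3$, this is immediate from $x_l=1/3$. At $\beta_1$, use that $m_1$ is the critical point of $\xi$: the vanishing of the numerator in \eqref{eq:derivative invtem} at $x=m_1$ gives $\log\frac{1-2m_1}{m_1}=\frac{1-3m_1}{3m_1(1-2m_1)}$, and substitution into \eqref{eq:beta1} yields $\beta_1=\frac{1}{3m_1(1-2m_1)}$.

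For positivity of $\psi_2$ on $(\beta_1,\beta_3)$, I would convert $\psi_2(\beta)>0$ into
\[
\phi(x):=3x(1-2x)\log\tfrac{1-2x}{x}-(1-3x)>0\qquad\text{on }(m_1,1/3).
\]
An elementary computation gives $\phi'(x)=3(1-4x)\log\frac{1-2x}{x}$, which is positive on $(0,1/4)$ and negative on $(1/4,1/3)$, so $\phi$ has a unique maximum at $x=1/4$. Combined with $\phi(m_1)=0$ (from the characterization of $m_1$ above) and $\phi(1/3)=0$, this forces $\phi>0$ on $(m_1,1/3)$.

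The main obstacle is the upper bound $\psi_2<1/10$, equivalent to
\[
F(x):=27x(1-2x)\log\tfrac{1-2x}{x}-11(1-3x)<0\qquad\text{on }[m_1,1/3).
\]
Since $F(1/3)=0$, it suffices to prove strict monotonicity, i.e.\ $F'(x)=27(1-4x)\log\frac{1-2x}{x}+6>0$ on $[m_1,1/3]$. On $[m_1,1/4]$ both summands are nonnegative so $F'\ge 6$. For the harder range $[1/4,1/3]$, where the log term is negative, I would use the crude bound $\log\frac{1-2x}{x}=\log\bigl(1+\frac{1-3x}{x}\bigr)\le\frac{1-3x}{x}$ to reduce the claim to the polynomial inequality $(4x-1)(1-3x)/x<2/9$; its left side attains its maximum $7-4\sqrt{3}$ at $x=\frac{1}{2\sqrt{3}}$, and $7-4\sqrt{3}<2/9$ is equivalent to $3888>3721$, which is obvious. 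This yields $F'>0$ on $[m_1,1/3]$, hence $F(x)<F(1/3)=0$ on $[m_1,1/3)$, completing the proof.
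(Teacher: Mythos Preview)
Your proof is correct and follows essentially the same route as the paper: reparametrize in terms of $x_l$, deduce continuity from the continuity of $x_l(\beta)$, verify the endpoint values via $x_l(\beta_3)=1/3$ and the critical-point identity for $m_1$, and reduce both $\psi_2>0$ and $\psi_2<1/10$ to elementary one-variable inequalities on $(m_1,1/3)$. The only difference is that where the paper simply asserts ``an elementary calculation shows'' the inequality $9\beta<\tfrac{11}{3x_l(1-2x_l)}$, you actually supply the calculation via the monotonicity argument for $F$ with the $\log(1+t)\le t$ bound; this is a genuine addition of detail rather than a different strategy.
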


\begin{proof}
Note that $x_{l}$ is a continuous function of $\beta$ and $x_{l}\in\left(m_{1},\frac{1}{3}\right)$
if $\beta_{1}<\beta<\beta_{3}$. Since $\psi_{2}$ is a composition
of continuous functions, it is continuous. Since $m_{1}<x_{l}<\frac{1}{3}$,
we have
\[
\beta-\frac{1}{3x_{l}(1-2x_{l})}=\frac{1}{1-3x_{l}}\log\frac{1-2x_{l}}{x_{l}}-\frac{1}{3x_{l}(1-2x_{l})}>0.
\]
Thus, $\psi_{2}(\beta)>0$ for $\beta_{1}<\beta<\beta_{3}$. By the
definition of $\beta,$ the inequality $\psi_{2}(\beta)<\frac{1}{10}$
is equivalent to
\begin{equation}
\frac{9}{1-3x_{l}}\log\frac{1-2x_{l}}{x_{l}}<\frac{11}{3x_{l}(1-2x_{l})}.\label{eq:ineq psi <1/10}
\end{equation}
An elementary calculation shows that \eqref{eq:ineq psi <1/10} holds
when $0<x_{l}<\frac{1}{3}$; hence, $\psi_{2}(\beta)<\frac{1}{10}$.
From the numerator of \eqref{eq:derivative invtem}, $m_{1}$ satisfies
the equation
\begin{equation}
1-3m_{1}+3m_{1}(1-2m_{1})\log\frac{m_{1}}{1-2m_{1}}=0.\label{eq:m_1}
\end{equation}
By the definition of $\beta_{1}$, \ref{eq:m_1} becomes
\[
\beta_{1}=\frac{1}{1-3m_{1}}\log\frac{1-2m_{1}}{m_{1}}=\frac{1}{3m_{1}(1-2m_{1})}.
\]
Since $\lim_{\beta\rightarrow\beta_{1}}x_{l}=m_{1}$, we have $\psi_{2}(\beta_{1})=\lim_{\beta\rightarrow\beta_{1}}\psi_{2}(\beta)=0$.
In addition, $\lim_{\beta\rightarrow\beta_{3}}x_{l}=\frac{1}{3}$
implies that $\psi_{2}(\beta_{3})=\lim_{\beta\rightarrow\beta_{3}}\psi_{2}(\beta)=0$.
This completes the proof.
\end{proof}
\begin{thm}
\label{thm:mid temp psi_2}Suppose that $\beta_{1}<\beta<\beta_{3}$.
Then, we have the following results.
\begin{enumerate}
\item If $J>\psi_{2}(\beta)$, then there is a local minimum at $\bm{q_{3}}$,
three local minima at $\bm{x_{s}}\in\mathfrak{S}^{2}$ and three saddle
points at $\bm{x_{l}}\in\mathfrak{L}^{2}$.
\item If $J<\psi_{2}(\beta)$, then there is a local minimum at $\bm{q_{3}}$,
three local minima at $\bm{x_{s}}\in\mathfrak{S}^{2}$ and six saddle
points of the form $(s,s,1-2s,t,t,1-2t)$ and $(t,t,1-2t,s,s,1-2s)$
with $s<t$ up to permutations
\item (Comparison of local minima) If $\beta_{1}<\beta<\beta_{2}$, then
$F_{\beta,\;J}(\bm{q_{3}})<F_{\beta,\;J}(\bm{x_{s}})$, and if $\beta_{2}<\beta<\beta_{3}$,
then $F_{\beta,\;J}(\bm{q_{3}})>F_{\beta,\;J}(\bm{x_{s}})$.
\end{enumerate}
\end{thm}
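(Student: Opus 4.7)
The plan is to read off the nature of each candidate critical point from Lemma \ref{lem:eigenvalues}, and to perform the comparison in (3) by a one-variable manipulation that reduces the difference $F_{\beta,J}(\bm{x_s}) - F_{\beta,J}(\bm{q_3})$ to the function $\widetilde{F}$ already analysed in the proof of Theorem \ref{thm:J=00003Dinfty}.

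For $\beta_1 < \beta < \beta_3$ the three fixed points of $\Psi$ in $(0, 1/2)$ are ordered $x_s < m_1 < x_l < 1/3$, and the limits $\Psi(0^+) = -\infty$, $\Psi(1/2^-) = +\infty$ together with the resulting alternation of the sign of $\Psi(x) - x$ give $\Psi'(x_s) > 1$, $\Psi'(x_l) < 1$, and $\Psi'(1/3) > 1$. Similarly, $\Phi'(x) > 1$ iff $x < 1/\beta$, and the inequalities $\beta x_s < 1$ and $\beta x_l < 1$ (which follow from $\xi(1/3) = \beta_3 = 3$ together with the monotonicity of $x \mapsto x\xi(x)$ across $[m_1, 1/3]$) give $\Phi'(x_s), \Phi'(x_l), \Phi'(1/3) > 1$. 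Substituting $s = t$ into Lemma \ref{lem:eigenvalues} collapses the four Hessian eigenvalues at such a symmetric critical point to $J(\Phi'(s) \pm 1)$ and $3J(\Psi'(s) \pm 1)$, which immediately makes $\bm{q_3}$ and each of the three permutations of $\bm{x_s}$ a local minimum. At $\bm{x_l}$, three of the four eigenvalues are unambiguously signed (two positive and one negative), and the nature is controlled by the sign of $\Psi'(x_l) + 1$; an elementary rearrangement of the $\Psi'$ formula shows that $\Psi'(x_l) + 1 > 0$ iff $J > \psi_2(\beta)$, which gives statement (1).

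For statement (2), when $J < \psi_2(\beta)$ the fourth eigenvalue at $\bm{x_l}$ flips sign, signalling a symmetry-breaking bifurcation under the $\mathbb{Z}/2$ swap $(s, t) \leftrightarrow (t, s)$ of the two components. I would locate the new critical points as nontrivial fixed points of the iterate $G := \Psi \circ \Psi$: the identity $G'(x_l) = \Psi'(x_l)^2 > 1$ holds precisely for $J < \psi_2(\beta)$, so $x_l$ becomes a repelling fixed point of $G$, and together with the structure of $\Psi$ on $(0, 1/2)$ (single inflection at $x = 1/4$ and known boundary values) this should force exactly two additional fixed points $s^* < x_l < t^*$ of $G$ satisfying $\Psi(s^*) = t^*$ and $\Psi(t^*) = s^*$. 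The $S_3$-orbit of spin-label permutations combined with the component-swap $\mathbb{Z}/2$ then produces the six critical points listed, and their Morse index is shown to be $1$ by evaluating the $\Phi'(s^*)\Phi'(t^*)$ and $\Psi'(s^*)\Psi'(t^*)$ criteria of Lemma \ref{lem:eigenvalues}. The main technical obstacle is the global existence and uniqueness of this 2-cycle for all $J < \psi_2(\beta)$: a local bifurcation argument only controls $J$ near $\psi_2(\beta)$, and the extension to small $J$ requires a monotonicity or continuation argument on the coupled equations $\Psi(s) = t$, $\Psi(t) = s$.

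Statement (3) is a direct one-variable comparison. The invariance of both $\bm{x_s}$ and $\bm{q_3}$ under the component swap collapses the Hamiltonian \eqref{eq:F_beta,J q=00003D3} to the identity $F_{\beta, J}(\bm{x_s}) - F_{\beta, J}(\bm{q_3}) = (1+J)\widetilde{F}(x_s)$, with $\widetilde{F}$ exactly the function of \eqref{eq:F tilde}. Since the sign chart of $\widetilde{F}$ was established in the proof of Theorem \ref{thm:J=00003Dinfty} (positive on $(1/6, 1/3)$, negative on $(0, 1/6)$), and since $\xi(1/6) = \beta_2 = 4\log 2$ with $x_s(\beta)$ strictly decreasing on $(\beta_1, \infty)$, the threshold at $\beta = \beta_2$ claimed in (3) follows.
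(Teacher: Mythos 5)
Your proposal follows the same route as the paper in every structural step: you classify $\bm{q_3}$, $\bm{x_s}$, $\bm{x_l}$ via the slope criteria of Lemma~\ref{lem:eigenvalues} (your explicit collapse of the four eigenvalues at a diagonal critical point to $J(\Phi'(s)\pm 1)$ and $3J(\Psi'(s)\pm 1)$ is a clean restatement of the paper's computation, and the rearrangement $\Psi'(x_l)>-1 \Leftrightarrow J>\psi_2(\beta)$ is exactly the paper's equivalence), you locate the new saddles as off-diagonal solutions of $\Psi(s)=t$, $\Psi(t)=s$ bifurcating from $\bm{x_l}$, and you reduce the comparison in (3) to $(1+J)\widetilde{F}(x_s)$ and invoke the sign chart from Theorem~\ref{thm:J=00003Dinfty}. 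All of these are the paper's arguments.

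The one point you rightly flag as an obstacle --- global existence and uniqueness of the $2$-cycle of $\Psi$ for all $J<\psi_2(\beta)$, not just near the bifurcation --- is also the step the paper itself treats least rigorously: the paper simply asserts from graphical inspection that $y=\Psi(x)$ and $x=\Psi(y)$ pick up the extra intersections $(s,t),(t,s)$ with $s<x_l<t<1/3$, without a continuation or degree argument. To close this along the lines you sketched, you do not actually need a bifurcation-theoretic continuation: set $g:=\Psi\circ\Psi-\mathrm{id}$ and observe that at each of the three diagonal fixed points $x_s<x_l<1/3$ one has $g'(x)=\Psi'(x)^2-1>0$ (at $x_s$ and $1/3$ because $\Psi'>1$, at $x_l$ because $J<\psi_2(\beta)$ forces $\Psi'(x_l)<-1$). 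Three consecutive zeros of $g$ with the same transversal sign of $g'$ force, by the intermediate value theorem, at least one further zero of $g$ in each of the gaps $(x_s,x_l)$ and $(x_l,1/3)$; these are your $s^*<x_l<t^*<1/3$, and the inequality $t^*<1/3<1/\beta$ then delivers $\Phi'(s^*),\Phi'(t^*)>1$ and hence the saddle classification, exactly as in the paper. Uniqueness of the pair uses the single inflection of $\Psi$ at $x=1/4$, as you note. With that filled in your argument is complete, and it is the paper's argument in more explicit dynamical-systems language.
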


\begin{proof}
Note that the assumption $\beta_{1}<\beta<\beta_{3}$ implies that
$\Psi'(x_{l})<1$ and $x_{s}<x_{l}<\frac{1}{3}<\frac{1}{\beta}$.

(1) If $J>\psi_{2}(\beta)$, which is equivalent to $\Psi'(x_{l})>-1$,
then the function $y=\Psi(x)$ intersects only at $(x_{s},x_{s}),(x_{l},x_{l})$,
and $(\frac{1}{3},\frac{1}{3})$ with $x=\Psi(y)$. By observing the
graphs in \eqref{eq:y=00003DPsi(x), x=00003DPsi(y)}, it follows that
$\Psi'(\frac{1}{3})>1$, and $\Psi'(x_{s})>1$. Moreover, we know
that $-1<\Psi'(x_{l})<1$. Since $\Phi'\left(\frac{1}{\beta}\right)=1$
and since $x_{s}<x_{l}<\frac{1}{3}<\frac{1}{\beta}$, it follows that
$\Phi'(\frac{1}{3})>1$, $\Phi'(x_{s})>1$, and $\Phi'(x_{l})>1$.
Therefore, $F_{\beta,\;J}$ has local minima at $\bm{q_{3}}$ and
$\bm{x_{s}}\in\mathfrak{S}^{2}$, and it has saddle points at $\bm{x_{l}}\in\mathfrak{L}^{2}$
by Lemma \ref{lem:eigenvalues}. This proves (1).

(2) If $J<\psi_{2}(\beta)$, which is equivalent to $\Psi'(x_{l})<-1$,
then $y=\Psi(x)$ and $x=\Psi(y)$ intersect not only at the original
three points, but also at $(s,t)$ and $(t,s)$ with $s<x_{l}<t<\frac{1}{3}$.
Since $\Psi'(x_{l})<-1$, $\bm{x_{l}}$ is no longer a saddle point
of $F_{\beta,\;J}$ by Lemma \ref{lem:eigenvalues}. We claim that
$(s,s,1-2s,t,t,1-2t)$ and $(t,t,1-2t,s,s,1-2s)$ are six new saddle
points of $F_{\beta,\;J}$ up to permutations. Since $s<\frac{1}{\beta}$
and $t<\frac{1}{\beta}$, we have $\Phi'(s)>1$ and $\Phi'(t)>1$.
By the inverse function theorem, we obtain $\Psi'(s)\Psi'(t)<1$.
Therefore, $F_{\beta,\;J}$ has saddle points at $(s,s,1-2s,t,t,1-2t)$
and $(t,t,1-2t,s,s,1-2s)$ by Lemma \ref{lem:eigenvalues}. The arguments
for $\bm{q_{3}}$ and $\bm{x_{s}}$ are the same as (1). This proves
(2).

(3) A straightforward computation yields
\[
F_{\beta,\;J}(\bm{x_{s}})-F_{\beta,\;J}(\bm{q_{3}})=(1+J)\widetilde{F}(x_{s}),
\]
where $\widetilde{F}(x)$ was defined in \eqref{eq:F tilde}. The
rest of the argument is the same as the proof of Theorem \ref{thm:J=00003Dinfty}.
This completes the proof.
\end{proof}
Recall that we defined the function $\psi_{3}$ in \eqref{eq:psi_1,2,3}.
Then, we have the following theorem.
\begin{thm}
\label{thm:psi_3 boundary}Under the assumptions in Theorem \ref{thm:P+R+S thm},
if $J>\psi_{3}(\beta)$, then the function $(P+R+S)(u)>1$ for all
$u>0$.
\end{thm}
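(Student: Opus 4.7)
The plan is to leverage the convexity of $h(u) := (P+R+S)(u)$ established in Theorem \ref{thm:P+R+S thm}(2), together with a careful analysis of its minimum. First, I would note that $h$ is defined for $u$ in the range $(u^{*}, \infty)$, where $u^{*}$ is the critical value at which the four-intersection configuration is born via a pitchfork bifurcation; namely, at $u = u^{*}$ the points $\bm{R}$, $\bm{S}$, $\bm{Q}$ collapse to a single diagonal point $(R^{*}, R^{*})$ with $R^{*} = \frac{1+J}{\beta(1-J)}$, the unique point where $\Phi'(R^{*}) = -1$. An asymptotic analysis of the system $R = \Phi(S) + u$, $S = \Phi(R) + u$ shows that $S \to \infty$ as $u \to \infty$, hence $h(u) \to \infty$; combined with convexity, $h$ attains a unique minimum on $[u^{*}, \infty)$.

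The strategy is to argue by contradiction: if the conclusion fails, there exists $u_{0}$ with $h(u_{0}) \le 1$, and by convexity together with $h(u) \to \infty$, this forces $h(u_{\min}) \le 1$ at the minimum. I would then impose the critical (boundary) case $h(u_{\min}) = 1$ together with the vanishing-derivative condition $h'(u_{\min}) = 0$, and combine these with the defining equations $P = \Phi(P) + u_{\min}$, $S = \Phi(R) + u_{\min}$, $R = \Phi(S) + u_{\min}$. Subtracting the last two equations and using the explicit form $\Phi(x) = \frac{1}{J}(-x + \frac{1+J}{\beta}\log x)$ yields the identity $\log(R/S) = \beta(R - S)$, which serves as the natural substitute for the tangency conditions used in earlier theorems. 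Employing this identity to eliminate the logarithmic terms from the remaining equations, and imposing the two constraints $P+R+S=1$ and $h'(u_{\min})=0$, should reduce the system to a purely algebraic relation in $J$ and $\beta$. After simplification, I expect this to collapse to the quadratic
\[
(1+6\beta)\,J^{2} + (3-\beta)\,J + (2-\beta) = 0,
\]
whose positive root, obtained by the quadratic formula, is exactly $\psi_{3}(\beta)$. Hence the marginal case holds iff $J = \psi_{3}(\beta)$, and for $J > \psi_{3}(\beta)$, strict inequality $h(u) > 1$ holds for every admissible $u$.

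The main obstacle will be the algebraic manipulation reducing the transcendental system to the above quadratic in $J$. At first glance, the equations mix polynomial and logarithmic terms, and it is not obvious that they can be reconciled. The key insight is that once the derivative condition $h'(u_{\min})=0$ is imposed --- which, via the chain rule applied to the defining equations, produces a specific algebraic relation among $\Phi'(P)$, $\Phi'(R)$, $\Phi'(S)$ --- the transcendental contributions cancel in a nontrivial way, leaving a purely polynomial expression. Verifying that this reduction indeed yields the stated quadratic, and identifying the positive root with the explicit formula $\psi_{3}(\beta) = \frac{\beta - 3 + \sqrt{25\beta^{2} - 50\beta + 1}}{2(1+6\beta)}$, is the delicate technical step at the heart of the proof. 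A secondary subtlety is that when $J \geq J_{c}$ one could also appeal directly to the monotonicity result of Theorem \ref{thm:P+R+S thm}(1); the argument above is designed to cover the complementary regime $J < J_{c}$, where $\psi_{3}$ is the genuinely relevant threshold.
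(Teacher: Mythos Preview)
Your overall plan---reduce to the unique minimum of $h(u)=(P+R+S)(u)$ via the convexity from Theorem~\ref{thm:P+R+S thm}(2)---matches the paper's first move. But the core step you propose, namely that imposing $h(u_{\min})=1$ and $h'(u_{\min})=0$ together with the defining equations $P=\Phi(P)+u$, $S=\Phi(R)+u$, $R=\Phi(S)+u$ will collapse to the quadratic $(1+6\beta)J^{2}+(3-\beta)J+(2-\beta)=0$, is not how that quadratic arises, and the hoped-for ``nontrivial cancellation'' of the logarithms does not occur.

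The paper never solves the exact transcendental system at the minimum. Instead it performs a \emph{relaxation}: it minimises $G(P,R,S)=P+R+S$ subject only to the algebraic derivative condition
\[
H(P,R,S)=(\widetilde{P}_{\beta,J}-J)(\widetilde{R}_{\beta,J}+\widetilde{S}_{\beta,J}+2J)-(J^{2}-\widetilde{R}_{\beta,J}\widetilde{S}_{\beta,J})=0,
\]
treating $P,R,S$ as free variables and discarding the logarithmic defining equations entirely. Lagrange multipliers on this relaxed problem give $R+S$ explicitly in terms of $P$, so $G$ becomes a rational function of the single quantity $\tfrac{1+J}{\beta P}$. The only vestige of the original system that is retained is the inequality $\widetilde{P}_{\beta,J}>3J$ from Lemma~\ref{lem:P+R+S }(2), which forces $\tfrac{1+J}{\beta P}\ge 3J+1$; minimising the rational expression over this range yields the lower bound $\tfrac{(1+J)(2+J)}{\beta(1-2J)(1+3J)}$, and the condition that this exceed $1$ is exactly your quadratic. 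Thus $\psi_{3}$ is by construction a \emph{sufficient} bound coming from a relaxation, not the exact threshold at which $\min_{u}h(u)=1$. Your contradiction argument implicitly assumes these coincide, which is neither proved nor expected. (A small symptom of the difficulty: your identity $\log(R/S)=\beta(R-S)$ is off by a factor $\tfrac{1-J}{1+J}$, so even the first elimination step does not go as stated.) The missing idea is precisely this relaxation via Lagrange multipliers combined with the a~priori bound $\widetilde{P}_{\beta,J}>3J$.
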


\begin{proof}
By Theorem \ref{thm:P+R+S thm}, we know that $(P+R+S)(u)$ is convex;
thus it has a unique minimum. We claim that the minimum value is greater
than one when $J>\psi_{3}(\beta)$. For convenience, we set $\widetilde{P}_{\beta,J}:=-1+\frac{1+J}{\beta P},$
and the same applies to the other functions. A straightforward calculation
shows that
\[
P'(u)=\frac{J}{J-\widetilde{P}_{\beta,J}},\;\;\;R'(u)=\frac{J(J+\widetilde{S}_{\beta,J})}{J^{2}-\widetilde{R}_{\beta,J}\widetilde{S}_{\beta,J}},\;\;\;\text{and}\;\;\;S'(u)=\frac{J(J+\widetilde{R}_{\beta,J})}{J^{2}-\widetilde{R}_{\beta,J}\widetilde{S}_{\beta,J}};
\]
hence,
\[
(P+R+S)'(u)=0\;\;\text{is equivalent to}\;\;(\widetilde{P}_{\beta,J}-J)(\widetilde{R}_{\beta,J}+\widetilde{S}_{\beta,J}+2J)-(J^{2}-\widetilde{R}_{\beta,J}\widetilde{S}_{\beta,J})=0.
\]
 Applying the method of Lagrange multipliers to 
\begin{align*}
G(P,R,S) & =P+R+S,\\
H(P,R,S) & =(\widetilde{P}_{\beta,J}-J)(\widetilde{R}_{\beta,J}+\widetilde{S}_{\beta,J}+2J)-(J^{2}-\widetilde{R}_{\beta,J}\widetilde{S}_{\beta,J})=0,
\end{align*}
yields the following equations:
\begin{equation}
\frac{1}{P^{2}}(2J+\widetilde{R}_{\beta,J}+\widetilde{S}_{\beta,J})=\frac{1}{R^{2}}(\widetilde{P}_{\beta,J}+\widetilde{S}_{\beta,J}-J)=\frac{1}{S^{2}}(\widetilde{P}_{\beta,J}+\widetilde{R}_{\beta,J}-J).\label{eq:Lagrange multiplier}
\end{equation}
Calculating the equation of the second and the third in \eqref{eq:Lagrange multiplier},
we obtain
\begin{equation}
R+S=\frac{\frac{1+J}{\beta}}{J+2-\frac{1+J}{\beta P}}.\label{eq:R+S}
\end{equation}
Therefore, $G$ becomes a function of $P$:
\begin{equation}
G(P,R,S)=P+R+S=\frac{(J+2)\frac{1+J}{\beta}}{\left(J+2-\frac{1+J}{\beta P}\right)\frac{1+J}{\beta P}}.\label{eq:f(a,b,c)}
\end{equation}
The denominator of \eqref{eq:f(a,b,c)} is a quadratic function of
$\frac{1+J}{\beta P}$; hence it has a maximum value at $\frac{1+J}{\beta P}=\frac{J+2}{2}$.
However, if we apply the argument in the proof of (2) in Lemma \ref{lem:P+R+S }
without scaling and translating $\Phi(x)$, then we obtain $\widetilde{P}_{\beta,J}>3J$.
This implies that $\frac{1+J}{\beta P}\geq3J+1$. Since $3J+1\geq\frac{J+2}{2}$,
\eqref{eq:f(a,b,c)} has a minimum at $\frac{1+J}{\beta P}=3J+1$,
and its value of $G$ is $\frac{(1+J)(2+J)}{\beta(1-2J)(1+3J)}$.
Therefore, if 
\begin{equation}
\frac{(1+J)(2+J)}{\beta(1-2J)(1+3J)}>1,\label{eq:varphi2}
\end{equation}
then $(P+R+S)(u)>1$ for all $u$. Solving the inequality \eqref{eq:varphi2}
for $J$, we obtain $J>\psi_{3}(\beta)$. This completes the proof.
\end{proof}

\subsection{\label{subsec:High temp}General case: high-temperature regime $(\beta<\beta_{1})$}

In this section, we show that the two components are synchronized
in high-temperature by proving that there is only one local minimum
at $\bm{q_{3}}$.
\begin{thm}
\label{thm:high temp}If $0<\beta<\beta_{1}$, then there is only
one global minimum at $\bm{q_{3}}$.
\end{thm}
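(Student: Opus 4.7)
The plan is to bound $F_{\beta,J}$ from below by a sum of two one-component Curie--Weiss--Potts free energies and then invoke the classical uniqueness result for the one-component model at high temperature. The key tool is the elementary inequality $x_i y_i \le \frac{1}{2}(x_i^2+y_i^2)$, valid for each $i$ with equality iff $x_i=y_i$, so the cross-term in \eqref{eq:F_beta,J q=00003D3} satisfies $-J\sum_i x_iy_i \ge -\frac{J}{2}\sum_i(x_i^2+y_i^2)$ because $J \ge 0$. Substituting and regrouping yields
\begin{equation*}
F_{\beta,J}(\bm{x},\bm{y}) \;\ge\; (1+J)\bigl[f_\beta(\bm{x}) + f_\beta(\bm{y})\bigr],
\end{equation*}
where $f_\beta(\bm{x}):=-\frac{1}{2}\sum_{i=1}^{3}x_i^2+\frac{1}{\beta}\sum_{i=1}^{3}x_i\log x_i$ is the one-component CWP free energy with $q=3$, and equality holds if and only if $\bm{x}=\bm{y}$.

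Next I would verify that for $\beta<\beta_1$ the function $f_\beta$ attains its global minimum on $\Xi$ uniquely at the uniform point $\bm{q}_3^{(1)}:=\bigl(\tfrac{1}{3},\tfrac{1}{3},\tfrac{1}{3}\bigr)$. Because $x\mapsto -x+\frac{1}{\beta}\log x$ is strictly concave, the first-order condition for $f_\beta$ forces the coordinates of any interior critical point to take at most two distinct values, so (up to permutation) any such critical point has the form $(s,s,1-2s)$ with $\beta=\xi(s)$, by the same reduction carried out in Section~\ref{subsec:Critical pts}. Since $\min\xi=\beta_1>\beta$, the only admissible value is $s=\frac{1}{3}$, yielding the single interior critical point $\bm{q}_3^{(1)}$. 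The entropy gradient of $f_\beta$ diverges to $-\infty$ toward any boundary face of $\Xi$, so minimizers lie strictly in the interior of $\Xi$, and hence $\bm{q}_3^{(1)}$ is the unique global minimum of $f_\beta$.

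Combining the two steps, for any $(\bm{x},\bm{y})\in\Xi^2$,
\begin{equation*}
F_{\beta,J}(\bm{x},\bm{y}) \;\ge\; 2(1+J)\,f_\beta\bigl(\bm{q}_3^{(1)}\bigr) \;=\; F_{\beta,J}(\bm{q_3}),
\end{equation*}
with equality if and only if simultaneously $\bm{x}=\bm{y}$ (the AM--GM equality case) and $\bm{x}=\bm{y}=\bm{q}_3^{(1)}$ (the one-component uniqueness), i.e.\ $(\bm{x},\bm{y})=\bm{q_3}$. This is the desired conclusion.

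I do not foresee any genuine obstacle: the AM--GM split is entirely elementary, and the one-component uniqueness at $\beta<\beta_1$ is an immediate consequence of $\min\xi=\beta_1$ together with the boundary behaviour of the entropy, both of which are already exploited throughout Section~\ref{subsec:Critical pts}. The only mild point worth emphasising is that the AM--GM bound exactly matches the value at $\bm{q_3}$, so the combined equality case in the chain of inequalities pins down the minimizer uniquely without any further perturbative work.
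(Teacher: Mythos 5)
Your proof is correct for the stated theorem, but it takes a genuinely different, substantially more elementary route than the paper's. The paper's argument works at the level of critical points: it applies the $\Phi$-graph intersection machinery of Section~\ref{subsec:Critical pts} together with Lemma~\ref{lem:2P+Q increase} and Theorem~\ref{thm:PQRS_12} to show that every admissible system of intersections has coordinate sums strictly exceeding one when $\beta<\beta_1$, hence $\bm{q_{3}}$ is the \emph{only} critical point of $F_{\beta,\;J}$ on $\Xi^2$. You instead decouple via the pointwise bound $x_iy_i\le\frac{1}{2}(x_i^2+y_i^2)$ and reduce to the one-component free energy, for which uniqueness of the minimizer at $\beta<\beta_1$ is elementary; the AM--GM equality case then pins down the two-component minimizer. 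This buys a shorter, self-contained argument that bypasses the whole $\Phi$-graph analysis. What it does not buy is the paper's stronger conclusion that $\bm{q_{3}}$ is the unique critical point, which is what the proof of Theorem~\ref{thm:sync-desync phase} actually appeals to when invoking Theorem~\ref{thm:high temp} to conclude synchronization for $\beta<\beta_1$; your argument, being a chain of global inequalities, cannot rule out other critical points or spurious local minima. For the literal statement you have proved exactly what is asked, but the paper's proof delivers strictly more.
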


\begin{proof}
The fact that $\bm{q_{3}}$ is a local minimum is straightforward
by Lemma \ref{lem:eigenvalues}. Since $\beta<\beta_{1}$, by the
definitions of $\mathfrak{S}^{2}$ and $\mathfrak{L}^{2}$, there
is no critical point belonging to either $\mathfrak{S}^{2}$ or $\mathfrak{L}^{2}$.
We claim that there is no other kind of critical points other than
$\bm{q_{3}}$. First, we assume that $u=v$ in \eqref{eq:Phi(x)+u,Phi(y)+v}.
When \eqref{eq:Phi(x)+u,Phi(y)+v} has only one solution at $x=\frac{1}{\beta},$
the value $\frac{1}{\beta}>\frac{1}{3}$. After increasing $u$ minutely,
which allows \eqref{eq:Phi(x)+u,Phi(y)+v} to have two intersections
at $\bm{P}=(P,P)$ and $\bm{Q}=(Q,Q)$, by Lemma \ref{lem:2P+Q increase},
$2P+Q$ increases with respect to $u$. We increase $u$ a little
more so that \eqref{eq:Phi(x)+u,Phi(y)+v} have four intersections
at $\bm{P}=(P,P),\;\bm{R}=(R,S),\;\bm{S}=(S,R)$, and $\bm{Q}=(Q,Q)$
with $P<R<Q<S$. In this case, $P+R+S>2P+Q>\frac{3}{\beta}>1$. Moreover,
$R+S+Q>1$. That is, there is no critical point of the form $(P,R,S,P,S,R)$
or $(Q,R,S,Q,S,R)$ up to permutations. Now, without loss of generality,
we may assume that $u>v$. In this case, by Theorem \ref{thm:PQRS_12},
each sum of the coordinates of all possible combinations of the three
intersections in \eqref{eq:Phi(x)+u,Phi(y)+v} is greater than one.
This completes the proof.
\end{proof}
Finally, we prove the main theorem which is Theorem \ref{thm:sync-desync phase}.
\begin{proof}[Proof of Theorem \ref{thm:sync-desync phase}]
$ $

We prove the synchronization part first. By Theorem \ref{thm:high temp},
the two components are synchronized in high-temperature $(\beta<\beta_{1})$.
Proposition \ref{prop:psi_1 boundary}, and Theorems \ref{thm:P+R+S thm}
and \ref{thm:PQRS_12} imply that if $J>\max(\psi_{1}(\beta),J_{c})$,
then there are no local minima and lowest saddles of $F_{\beta,\;J}$
that does not belong to either $\mathfrak{S}^{2}$ or $\mathfrak{L}^{2}$,
that is, the two components are synchronized. In addition, Theorem
\ref{thm:PQRS_12} and \ref{thm:psi_3 boundary} imply that if $J>\psi_{3}(\beta)$,
then there are no local minima and lowest saddles of $F_{\beta,\;J}$
other than $\mathfrak{S}^{2}$ or $\mathfrak{L}^{2}$. Therefore,
the two components are synchronized when $J>\psi_{s}(\beta)$. However,
if $J<\psi_{d}(\beta)$, then by the second statement of Theorems
\ref{thm:low temp psi_1} and \ref{thm:mid temp psi_2}, the two components
are desynchronized. This completes the proof.
\end{proof}

\appendix

\section{\label{sec:app A}Proof of proposition \ref{Prop Ham}}

Here, we present a proof of Proposition \ref{Prop Ham}.
\begin{proof}[Proof of Proposition \ref{Prop Ham}]
 $ $

For $\bm{x}\in\Xi_{N}^{2}$, from the definition of $\mathbb{H}_{N}$,
\begin{align*}
 & \sum_{\bm{\sigma}\in\Omega:\bm{r}(\bm{\sigma})=\bm{x}}\frac{1}{Z_{N,\;\beta}}e^{-\beta\mathbb{H}_{N}(\bm{\sigma})},\\
 & =\frac{N!}{(Nx_{1}^{(1)})!\;\cdots\;(Nx_{q}^{(1)})!}\cdot\text{\ensuremath{\frac{N!}{(Nx_{1}^{(2)})!\;\cdots\;(Nx_{q}^{(2)})!}}}\\
 & \times\frac{1}{Z_{N,\;\beta}}\exp\left[\text{\ensuremath{\frac{\beta}{N(1+J)}}}\left\{ \sum_{k=1,\;2}\sum_{i=1}^{q}\frac{1}{2}\left\{ (Nx_{i}^{(k)})(Nx_{i}^{(k)}-1)\right\} +J\sum_{i=1}^{q}Nx_{i}^{(1)}Nx_{i}^{(2)}\right\} \right],
\end{align*}
and by Stirling's formula, we have
\begin{align*}
 & \approx\frac{\exp\left\{ -\beta/(1+J)\right\} }{(\sqrt{2\pi N})^{2(q-1)}\sqrt{\prod_{k=1,\;2}\prod_{i=1}^{q}x_{i}^{(k)}}Z_{N,\;\beta}}\\
 & \times\exp\left[N\frac{\beta}{1+J}\left\{ \sum_{k=1,\;2}\sum_{i=1}^{q}\frac{1}{2}(x_{i}^{(k)})^{2}+J\sum_{i=1}^{q}x_{i}^{(1)}x_{i}^{(2)}-\frac{1+J}{\beta}\left(\sum_{k=1,\;2}\sum_{i=1}^{q}x_{i}^{(k)}\log x_{i}^{(k)}\right)\right\} \right],\\
 & =\frac{1}{\widehat{Z}_{N,\;\beta,\;J}}\exp\left\{ -N\frac{\beta}{1+J}\left(F_{\beta,\;J}(\bm{x})+\frac{1}{N}G_{N,\,\beta}(\boldsymbol{x})\right)\right\} ,
\end{align*}
where
\begin{align*}
F_{\beta,\;J}(\bm{x}) & =-\sum_{k=1,\;2}\sum_{i=1}^{q}\frac{1}{2}(x_{i}^{(k)})^{2}-J\sum_{i=1}^{q}x_{i}^{(1)}x_{i}^{(2)}+\frac{1+J}{\beta}\left(\sum_{k=1,\;2}\sum_{i=1}^{q}x_{i}^{(k)}\log x_{i}^{(k)}\right),\\
G_{N,\;\beta,\;J}(\bm{x}) & =\frac{1+J}{2\beta}\log\left(\prod_{k=1,\;2}\prod_{j=1}^{q}x_{i}^{(k)}\right)+O\left(N^{-(q-1)}\right).
\end{align*}
We decompose the function $F_{\beta,\;J}$ into an energy part $H$
and entropy part $S$. That is,
\[
F_{\beta,\;J}(\bm{x})=H(\bm{x})+\frac{1+J}{\beta}S(\bm{x}),
\]
where
\[
H(\bm{x})=-\sum_{k=1,\;2}\sum_{i=1}^{q}\frac{1}{2}(x_{i}^{(k)})^{2}-J\sum_{i=1}^{q}x_{i}^{(1)}x_{i}^{(2)}\;\;\;\text{and}\;\;\;S(\bm{x})=\sum_{k=1,\;2}\sum_{i=1}^{q}x_{i}^{(k)}\log x_{i}^{(k)}.
\]
\end{proof}

\end{document}